\renewcommand{\leq}{\leqslant}
\renewcommand{\geq}{\geqslant}
\renewcommand{\gamma}{\upgamma}
\newcommand{\dd}{\mathrm{d}}
\newcommand{\e}{\varepsilon}
\newcommand{\R}{\mathbb R}
\newtheorem{theorem}{Theorem}
\newtheorem{lemma}[theorem]{Lemma}
\newtheorem{proposition}[theorem]{Proposition}
\newtheorem{corollary}[theorem]{Corollary}
\newtheorem{definition}[theorem]{Definition}
\theoremstyle{remark}
\newtheorem{remark}[theorem]{Remark}
\newtheorem{question}[theorem]{Question}
\newcommand{\C}{\mathbb C}
\renewcommand{\emptyset}{\varnothing}
\newcommand{\ent}{\mathrm{Ent}}
\newcommand{\proj}{\mathrm{Proj}}
\newcommand{\mb}{\mathbb}
\newcommand*\diff{\mathop{}\!\mathrm{d}}
\begin{document}

\title{Gaussian mixtures: entropy and geometric inequalities}
\author{Alexandros Eskenazis}
\address{$\mathrm{(A.~E.)}$ Mathematics Department\\ Princeton University\\ Fine Hall, Washington Road, Princeton, NJ 08544-1000, USA}
\email{ae3@math.princeton.edu}
\thanks{The authors were supported in part by the Simons Foundation. P.~N. was supported in part by NCN grant DEC-2012/05/B/ST1/00412.}

\author{Piotr Nayar}
\address{$\mathrm{(P.~N.)}$ Wharton Statistics Department\\ University of Pennsylvania\\ 3730 Walnut St, Philadelphia, PA 19104, USA}
\email{nayar@mimuw.edu.pl}

\author{Tomasz Tkocz}
\address{$\mathrm{(T.T.)}$ Mathematics Department\\ Princeton University\\ Fine Hall, Washington Road, Princeton, NJ 08544-1000, USA}
\email{ttkocz@princeton.edu}

\keywords{Gaussian measure, Gaussian mixture, Khintchine inequality, entropy, B-inequality, small ball probability, correlation inequalities, extremal sections and projections of $\ell_p$-balls.}
\subjclass[2010]{Primary: 60E15; Secondary: 52A20, 52A40, 94A17.}

\maketitle

\begin{abstract}
A symmetric random variable is called a {\it Gaussian mixture} if it has the same distribution as the product of two independent random variables, one being positive and the other a standard Gaussian random variable. Examples of Gaussian mixtures include random variables with densities proportional to $e^{-|t|^p}$ and symmetric $p$-stable random variables, where $p\in(0,2]$. We obtain various sharp moment and entropy comparison estimates for weighted sums of independent Gaussian mixtures and investigate extensions of the B-inequality and the Gaussian correlation inequality in the context of Gaussian mixtures.  We also obtain a correlation inequality for symmetric geodesically convex sets in the unit sphere equipped with the normalized surface area measure. We then apply these results to derive sharp constants in Khintchine inequalities for  vectors uniformly distributed on the unit balls with respect to $p$-norms and provide short proofs to new and old comparison estimates for geometric parameters of sections and projections of such balls.
\end{abstract}

\begin{section}{Introduction}
Gaussian random variables and processes have always been of central importance in probability theory and have numerous applications in various areas of mathematics. Recall that the measure $\gamma_n$ on $\R^n$ with density $\diff\gamma_n(x) = (2\pi)^{-n/2} e^{-\sum_{j=1}^n x_j^2/2} \diff x$ is called the standard Gaussian measure and a random vector distributed according to $\gamma_n$ is called a standard Gaussian random vector. A centered Gaussian measure on $\R^n$ is defined to be a linear image of standard Gaussian measure. In the past four decades intensive research has been devoted to geometric properties related to Gaussian measures (see, e.g., the survey \cite{Lat02}), which have provided indispensable tools for questions in convex geometry and the local theory of Banach spaces. In many cases, however, it still remains a challenging open problem to determine whether such properties are Gaussian per se or, in fact, more general.

The main purpose of the present article is to investigate properties of mixtures of Gaussian measures and demonstrate that they are of use to concrete geometric questions.

\begin{definition} \label{def:1}
A random variable $X$ is called a (centered) Gaussian mixture if there exists a positive random variable $Y$ and a standard Gaussian random variable $Z$, independent of $Y$, such that $X$ has the same distribution as the product $YZ$.
\end{definition}

For example, a random variable $X$ with density of the form
$$f(x) = \sum_{j=1}^m p_j \frac{1}{\sqrt{2\pi}\sigma_j} e^{-\frac{x^2}{2\sigma_j^2}},$$
where $p_j,\sigma_j>0$ are such that $\sum_{j=1}^m p_j = 1$, is a Gaussian mixture corresponding to the discrete random variable $Y$ with $\mb{P}(Y=\sigma_j)=p_j$. Finite weighted averages of non-centered Gaussian measures are ubiquitous in information theory and theoretical computer science (see, for instance, \cite{Das99}, \cite{AK01} for relevant results in learning theory) and are often referred in the literature as Gaussian mixtures. In this paper, we shall reserve this term for \emph{centered} Gaussian mixtures in the sense of Definition \ref{def:1}. Observe that Gaussian mixtures are necessarily symmetric and continuous. We shall now discuss a simple analytic characterization of Gaussian mixtures in terms of their probability density functions.

Recall that an infinitely differentiable function $g:(0,\infty)\to\R$ is called completely monotonic if $(-1)^n g^{(n)}(x) \geq 0$ for all $x>0$ and $n\geq0$, where for $n\geq1$ we denote by $g^{(n)}$ the $n$-th derivative of $g$ and $g^{(0)}=g$. A classical theorem of Bernstein (see, e.g., \cite{Fel71}) asserts that $g$ is completely monotonic if and only if it is the Laplace transform of some measure, i.e. there exists a non-negative Borel measure $\mu$ on $[0,\infty)$ such that
\begin{equation} \label{eq:bern}
f(x) = \int_0^\infty e^{-tx} \diff\mu(t), \ \ \mbox{for every } x>0.
\end{equation}
Bernstein's theorem implies the following equivalence.

\begin{theorem} \label{thm:charact}
A symmetric random variable $X$ with density $f$ is a Gaussian mixture if and only if the function $x\mapsto f(\sqrt{x})$ is completely monotonic for $x>0$. 
\end{theorem}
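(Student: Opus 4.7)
The plan is to identify a third equivalent formulation—that $f$ admits a representation as a non-negative mixture of centered Gaussian densities—and use it as a bridge between the probabilistic statement (Gaussian mixture) and the analytic statement (complete monotonicity). The link to Laplace transforms comes from the change of variable $t=1/(2y^2)$, which converts the Gaussian factor $e^{-x^2/(2y^2)}$ into $e^{-tx^2}$, i.e.\ $e^{-t\cdot x^2}$, exactly the kernel appearing in Bernstein's theorem applied in the variable $x^2$.

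For the forward direction, I would take $X \simd YZ$ with $Y$ positive of law $\nu$ and $Z$ standard Gaussian, condition on $Y$ to write
\[
f(x) \;=\; \int_0^\infty \frac{1}{y\sqrt{2\pi}}\, e^{-x^2/(2y^2)}\, \diff\nu(y),
\]
and then push forward the (non-negative Borel) measure $\frac{1}{y\sqrt{2\pi}}\diff\nu(y)$ on $(0,\infty)$ via $y \mapsto 1/(2y^2)$ to obtain a measure $\mu$ on $[0,\infty)$ with $f(\sqrt{u}) = \int_0^\infty e^{-tu}\diff\mu(t)$. Bernstein's theorem \eqref{eq:bern} then yields complete monotonicity of $x \mapsto f(\sqrt{x})$ immediately.

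For the converse, I would apply Bernstein's theorem to $x \mapsto f(\sqrt{x})$ to produce a non-negative Borel measure $\mu$ on $[0,\infty)$ with $f(x) = \int_0^\infty e^{-tx^2}\diff\mu(t)$, and reinterpret each $e^{-tx^2}$ as $\sqrt{\pi/t}$ times the density of $N(0,1/(2t))$. Using symmetry of $f$ together with Fubini in the identity $\int_\R f = 1$ gives
\[
1 \;=\; 2\int_0^\infty \!\!\int_0^\infty e^{-tx^2}\,\diff\mu(t)\,\diff x \;=\; \int_0^\infty \sqrt{\pi/t}\,\diff\mu(t),
\]
which simultaneously forces $\mu(\{0\})=0$ and shows that $\sqrt{\pi/t}\,\diff\mu(t)$ is a probability measure on $(0,\infty)$. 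Its pushforward under $t \mapsto 1/\sqrt{2t}$ is then the distribution of the desired positive random variable $Y$ with $X \simd YZ$.

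The step requiring the most care is the normalization bookkeeping in the converse direction: verifying that the measure delivered by Bernstein, once reweighted by $\sqrt{\pi/t}$, is a genuine probability measure (rather than a sub-probability, an infinite measure, or one charging $0$), and justifying the Fubini exchange before this has been checked. Everything else is a routine application of Bernstein once the change of variable $t = 1/(2y^2)$ has been recognized as the correct bridge between the two formulations.
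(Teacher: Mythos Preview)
Your proposal is correct and follows essentially the same route as the paper: both directions hinge on Bernstein's theorem together with the change of variable $t=1/(2y^2)$, and your normalization argument via $\int_\R f=1$ is exactly the paper's computation with $A=\R$ (the Fubini step is unproblematic since all integrands are non-negative, so Tonelli applies). The only cosmetic difference is that the paper first argues $\mu(\{0\})=0$ directly from integrability of $f$ and then derives that $\sqrt{\pi/t}\,\diff\mu(t)$ is a probability measure, whereas you extract both facts simultaneously from the Fubini identity.
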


Theorem \ref{thm:charact} will be proven in Section \ref{sec:2}. It readily implies that for every $p\in(0,2]$ the random variable with density $c_p e^{-|x|^p}$ is a Gaussian mixture; we denote its law by $\mu_p$ and by $\mu_p^n =\mu_p^{\otimes n}$ the corresponding product measure. Another example of Gaussian mixtures are symmetric $p$-stable random variables, where $p\in(0,2]$ (see Lemma \ref{lemma:computations} in Section \ref{sec:2}). Recall that a symmetric $p$-stable random variable $X$ is a random variable whose characteristic function is $\mb{E} e^{itX} = e^{-c|t|^p}$, for $t\in\R$ and some $c>0$. Standard symmetric $p$-stable random variables correspond to $c=1$. In the consecutive subsections we shall describe our main results on Gaussian mixtures.


\subsection{Sharp Khintchine-type inequalities} The classical Khintchine inequality asserts that for every $p\in(0,\infty)$ there exist positive constants $A_p,B_p$ such that for every real numbers $a_1,\ldots,a_n$ we have
\begin{equation} \label{eq:radkhin}
A_p \Big(\sum_{i=1}^n a_i^2\Big)^{1/2} \leq \Big( \mb{E} \Big| \sum_{i=1}^n a_i \e_i \Big|^p \Big)^{1/p} \leq B_p \Big(\sum_{i=1}^n a_i^2\Big)^{1/2},
\end{equation}
where $\e_1,\ldots,\e_n\in\{-1,1\}$ are independent symmetric random signs. Whittle discovered the best constants in \eqref{eq:radkhin} for $p\geq3$ (see \cite{Whi60}), Szarek treated the case $p=1$ (see \cite{Sza76}) and finally Haagerup completed this line of research determining the optimal values of $A_p, B_p$ for any $p>0$ (see \cite{Haa81}).

Following Haagerup's results, sharp Khintchine inequalities for other random variables have also been investigated extensively (see, for example, \cite{LO95}, \cite{BC02}, \cite{NO12}, \cite{Koe14}). In particular, in \cite{LO95}, Lata\l a and Oleszkiewicz treated the case of i.i.d. random variables uniformly distributed on $[-1,1]$ and proved a comparison result in the sense of majorization that we shall now describe.

We say that a vector $a=(a_1,\ldots,a_n)$ is majorized by a vector $b=(b_1,\ldots,b_n)$, denoted $a\preceq b$, if the nonincreasing rearrangements $a_1^*\geq\ldots\geq a_n^*$ and $b_1^*\geq\ldots\geq b_n^*$ of the coordinates of $a$ and $b$, respectively, satisfy the inequalities
$$\sum_{j=1}^k a^*_j \leq \sum_{j=1}^k b_j^* \ \ \mbox{for each } k\in\{1,\ldots,n-1\} \ \ \mbox{and } \ \sum_{j=1}^n a_j = \sum_{j=1}^n b_j.$$
For a general reference on properties and applications of the majorization ordering see \cite{MO79}. For instance, every vector $(a_1,\ldots,a_n)$ with $a_i \geq 0$ and $\sum_{i=1}^n a_i=1$ satisfies
\begin{equation} \label{eq:order}
\Big( \frac{1}{n},\ldots,\frac{1}{n}\Big) \preceq (a_1,\ldots,a_n) \preceq (1,0,\ldots,0).
\end{equation}
A real-valued function which preserves (respectively reverses) the ordering $\preceq$ is called Schur convex (respectively Schur concave).

The main result of \cite{LO95} reads as follows. Let $U_1,\ldots, U_n$ be i.i.d. random variables, uniformly distributed on $[-1,1]$. For $p\geq2$ and $(a_1,\ldots,a_n), (b_1,\ldots,b_n) \in \R^n$ we have
\begin{equation} \label{eq:khinunif}
(a_1^2,\ldots,a_n^2) \preceq (b_1^2,\ldots,b_n^2) \ \Longrightarrow \ \mb{E}\Big| \sum_{i=1}^n a_i U_i \Big|^p \geq \mb{E}\Big| \sum_{i=1}^n b_iU_i\Big|^p
\end{equation}
and for $p\in[1,2)$ the second inequality is reversed. In particular, combining \eqref{eq:order} and \eqref{eq:khinunif}, for any $p\geq2$ and a unit vector $(a_1,\ldots,a_n)$ we get
\begin{equation} \label{eq:khinunif2}
\mb{E} |U_1|^p \leq \mb{E} \Big| \sum_{i=1}^n a_iU_i\Big|^p \leq \mb{E} \Big| \frac{U_1+\cdots+U_n}{\sqrt{n}} \Big|^p,
\end{equation}
whereas for $p\in[1,2)$ the reverse inequalities hold. Inequality \eqref{eq:khinunif2} along with the central limit theorem implies that the sharp constants in the Khintchine inequality
\begin{equation} \label{eq:khinunif3}
A_p \Big( \mb{E} \Big| \sum_{i=1}^n a_iU_i\Big|^2 \Big)^{1/2} \leq \Big( \mb{E} \Big| \sum_{i=1}^n a_iU_i\Big|^p \Big)^{1/p} \leq B_p \Big( \mb{E} \Big| \sum_{i=1}^n a_iU_i\Big|^2 \Big)^{1/2}
\end{equation}
are precisely
\begin{equation} \label{eq:unifconstants}
A_p = \begin{cases} \gamma_p,& p\in[1,2) \\ \frac{3^{1/2}}{(p+1)^{1/p}},& p\in [2,\infty) \end{cases} \ \ \ \mbox{and} \ \ \ B_p = \begin{cases} \frac{3^{1/2}}{(p+1)^{1/p}},& p\in[1,2) \\ \gamma_p,& p\in [2,\infty) \end{cases},
\end{equation}
where $\gamma_p = \sqrt{2} \left( \frac{\Gamma\left(\frac{p+1}{2}\right)}{\sqrt{\pi}}\right)^{1/p}$ is the $p$-th moment of a standard Gaussian random variable. 

Our main result for moments is an analogue of the Schur monotonicity statement \eqref{eq:khinunif} for Gaussian mixtures. Recall that for a random variable $Y$ and $p\neq0$ we denote by $\|Y\|_p = (\mb{E} |Y|^p)^{1/p}$ its $p$-th moment and $\|Y\|_0 = \exp(\mb{E}\log|Y|)$. Notice that since a standard Gaussian random variable $Z$ satisfies $\mb{E}|Z|^p = \infty$ for every $p\leq -1$, a moment comparison result for Gaussian mixtures can only make sense for $p$-th moments, where $p>-1$.

\begin{theorem} \label{thm:khin}
Let $X$ be a Gaussian mixture and $X_1,\ldots,X_n$ be independent copies of $X$. For two vectors $(a_1,\ldots,a_n), (b_1,\ldots,b_n)$ in $\R^n$ and $p\geq2$ we have
\begin{equation} \label{eq:mixtkhin}
(a_1^2,\ldots,a_n^2) \preceq (b_1^2,\ldots,b_n^2) \ \Longrightarrow \ \Big\| \sum_{i=1}^n a_iX_i\Big\|_p \leq \Big\|\sum_{i=1}^n b_iX_i\Big\|_p,
\end{equation}
whereas for $p\in (-1,2)$ the second inequality is reversed, provided that $\mb{E}|X|^p<\infty$.
\end{theorem}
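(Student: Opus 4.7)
The plan is to condition on the mixing scales and reduce the theorem to the Schur monotonicity of a symmetric convex (or concave) function. Write each $X_i$ as $Y_i Z_i$, where $Y_1,\ldots,Y_n$ are i.i.d.\ copies of the positive factor of $X$ and $Z_1,\ldots,Z_n$ are i.i.d.\ standard Gaussians independent of the $Y_i$'s. Conditional on $(Y_1,\ldots,Y_n)$, the sum $\sum_{i=1}^n a_i X_i = \sum_{i=1}^n a_i Y_i Z_i$ is centered Gaussian with variance $\sum_{i=1}^n a_i^2 Y_i^2$, so
\[
\E \Big| \sum_{i=1}^n a_i X_i \Big|^p \;=\; \E|Z|^p \cdot \phi_p(a_1^2,\ldots,a_n^2), \qquad \phi_p(c) \eqdef \E \Big( \sum_{i=1}^n c_i Y_i^2 \Big)^{p/2},
\]
for every $p>-1$ with $\E|X|^p<\infty$. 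The theorem thus reduces to an appropriate Schur monotonicity statement for $\phi_p$ on $[0,\infty)^n$.

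The function $\phi_p$ is symmetric in its $n$ arguments and, pointwise in $Y$, is the composition of the linear form $c\mapsto \sum_i c_i Y_i^2$ with the power $h_p(t)=t^{p/2}$. Since $h_p$ is convex on $(0,\infty)$ when $p\in(-1,0]\cup[2,\infty)$ and concave when $p\in[0,2]$, and since convexity and concavity survive under expectation, $\phi_p$ is a symmetric convex (respectively, concave) function in the corresponding ranges of $p$. A classical fact in majorization theory (see \cite{MO79}) asserts that every symmetric convex function is Schur-convex and every symmetric concave function is Schur-concave; hence $(a_i^2)\preceq(b_i^2)$ implies $\phi_p(a^2)\leq\phi_p(b^2)$ for $p\in(-1,0]\cup[2,\infty)$ and the reverse inequality for $p\in[0,2]$.

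It then remains to translate the inequality for $\phi_p$ back into one for $\|\cdot\|_p$, which is where a minor sign subtlety appears. For $p\geq 2$, raising both sides to $1/p>0$ (order-preserving) immediately gives $\|\sum a_iX_i\|_p\leq\|\sum b_iX_i\|_p$. For $p\in(0,2)$ the same order-preserving step converts $\phi_p(a^2)\geq\phi_p(b^2)$ into the reversed inequality claimed. For $p\in(-1,0)$, although we only obtain $\phi_p(a^2)\leq\phi_p(b^2)$, the map $x\mapsto x^{1/p}$ is now \emph{order-reversing}, and this flip restores the same direction as in the $p\in(0,2)$ case, yielding $\|\sum a_iX_i\|_p\geq\|\sum b_iX_i\|_p$. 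The boundary case $p=0$ is handled analogously: using $\|W\|_0=\exp(\E\log|W|)$, the problem reduces to showing Schur-concavity of $c\mapsto \E\log(\sum_i c_i Y_i^2)$, which follows since $\log$ is concave on $(0,\infty)$.

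There is no genuine obstacle here; the only point requiring care is the bookkeeping of signs caused by the negativity of $p$, which is responsible for the seemingly paradoxical switch in the convexity of $\phi_p$ between the subranges $(-1,0)$ and $(0,2)$ even though the theorem treats the interval $(-1,2)$ uniformly. Integrability of $\phi_p$ for $p<0$ is not an issue, since $(\sum a_i^2 Y_i^2)^{p/2}\leq |a_1|^p Y_1^p$ whenever $a_1\neq0$, and the hypothesis $\E|X|^p<\infty$ forces $\E Y_1^p<\infty$.
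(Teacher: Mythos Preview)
Your proof is correct and follows essentially the same route as the paper: decompose each $X_i$ as $Y_iZ_i$, reduce to $\gamma_p^p\,\E\big(\sum_i a_i^2 Y_i^2\big)^{p/2}$, and invoke the Schur monotonicity of symmetric convex/concave functions (the paper cites Marshall--Proschan \cite{MP65}, you cite \cite{MO79}; the content is the same). Your more explicit bookkeeping of the sign flip for $p\in(-1,0)$ and the integrability remark are welcome clarifications but do not change the argument.
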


The proof of Theorem \ref{thm:khin} and the straightforward derivation of sharp constants for the corresponding Khintchine inequalities (Corollary \ref{cor:khinconstants}) will be provided in Section \ref{sec:3}.

\begin{remark}
After the submission of this paper, we learned from C. Houdr\'e that Theorem \ref{thm:khin} had previously appeared in his joint work \cite[Proposition~2.6]{AH03} with R. Averkamp. We are grateful to C. Houdr\'e for providing us this reference.
\end{remark}

As an application we derive similar Schur monotonicity properties for vectors uniformly distributed on the unit ball of $\ell_q^n$ for $q\in(0,2]$, which were first considered by Barthe, Gu\' edon, Mendelson and Naor in \cite{BGMN05}. Recall that for a vector $x=(x_1,\ldots,x_n)\in\R^n$ and $q>0$ we denote $\|x\|_q = \big( \sum_{i=1}^n |x_i|^q\big)^{1/q}$ and $\|x\|_\infty = \max_{1\leq i \leq n} |x_i|$. We also write $\ell_q^n$ for the quasi-normed space $(\R^n,\|\cdot\|_q)$ and $B_ q^n = \{x\in\R^n: \ \|x\|_q\leq1\}$ for its closed unit ball. In \cite{BGMN05}, the authors discovered a representation for the uniform measure on $B_q^n$, relating it to the product measures $\mu_q^n$ defined after Theorem \ref{thm:charact}, and used it to determine the sharp constants in Khintchine inequalities on $B_q^n$ up to a constant factor. Using their representation along with Theorem \ref{thm:khin} we deduce the following comparison result.

\begin{corollary} \label{cor:ballkhin}
Fix $q\in(0,2]$ and let $X=(X_1,\ldots,X_n)$ be a random vector uniformly distributed on $B_q^n$. For two vectors $(a_1,\ldots,a_n),(b_1,\ldots,b_n)$ in $\R^n$ and $p\geq2$ we have
\begin{equation} \label{eq:ballkhin}
(a_1^2,\ldots,a_n^2) \preceq (b_1^2,\ldots,b_n^2) \ \Longrightarrow \ \Big\| \sum_{i=1}^n a_iX_i\Big\|_p \leq \Big\|\sum_{i=1}^n b_iX_i\Big\|_p,
\end{equation}
whereas for $p\in (-1,2)$ the second inequality is reversed.
\end{corollary}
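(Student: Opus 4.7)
The plan is to reduce the corollary to Theorem~\ref{thm:khin} via the probabilistic representation of the uniform measure on $B_q^n$ due to Barthe, Gu\'edon, Mendelson and Naor~\cite{BGMN05}. First I would introduce i.i.d.\ random variables $Y_1,\ldots,Y_n$ with law $\mu_q$ (density proportional to $e^{-|y|^q}$) and an independent standard exponential random variable $E$, and recall from \cite{BGMN05} that, with $S = \bigl(\sum_{i=1}^n |Y_i|^q + E\bigr)^{1/q}$, one has
$$X \simd \frac{1}{S}(Y_1,\ldots,Y_n).$$
Since $q\in(0,2]$, the law $\mu_q$ is a Gaussian mixture by the remark following Theorem~\ref{thm:charact}, so Theorem~\ref{thm:khin} will be directly applicable to $Y_1,\ldots,Y_n$.

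The key step is to establish a factorization identity of the form
\begin{equation*}
\Bigl\|\sum_{i=1}^n a_i X_i\Bigr\|_p = C_{p,q,n}\,\Bigl\|\sum_{i=1}^n a_i Y_i\Bigr\|_p \tag{$\ast$}
\end{equation*}
with a positive constant $C_{p,q,n}$ depending only on $p,q,n$ and \emph{not} on the coefficients $a_i$. Once $(\ast)$ is in hand, Theorem~\ref{thm:khin} applied to the Gaussian mixtures $Y_1,\ldots,Y_n$ transfers the Schur monotonicity (respectively its reversal for $p\in(-1,2)$) verbatim to $\|\sum a_i X_i\|_p$, completing the proof.

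To prove $(\ast)$ when $p>0$ I would write $|\sum a_i X_i|^p = |\sum a_i Y_i|^p S^{-p}$, expand $S^{-p} = \Gamma(p/q)^{-1}\int_0^\infty t^{p/q-1} e^{-tS^q}\, dt$, and use Fubini together with the product factorization $e^{-tS^q} = e^{-tE}\prod_i e^{-t|Y_i|^q}$. A rescaling $y_i \mapsto (1+t)^{-1/q} z_i$ of the $Y_i$-integral then extracts the factor $\mb{E}|\sum a_i Y_i|^p$ from the integrand, and the remaining $t$-integral reduces to a Beta function, yielding $(\ast)$ with the explicit value $C_{p,q,n}^p = \Gamma(1+n/q)/\Gamma(1+n/q+p/q)$. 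For $p\in(-1,0)$ the same strategy goes through, but one has to replace the Laplace representation of $S^{-p}$ by a Mellin-type subordination identity (which may need to be iterated once when $-p/q>1$, i.e.\ when $q<1$); the same scaling argument then reproduces $(\ast)$, with the constant equal to the analytic continuation of the $p>0$ formula.

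The main obstacle will be the negative-moment regime: I will need to ensure finiteness of $\mb{E}|\sum a_i Y_i|^p$ and to justify the interchange of integration and expectation in the presence of the singular factor $|\sum a_i Y_i|^p$ near the origin. Both follow from the boundedness of the density of $\sum a_i Y_i$ near zero (inherited from the smooth bounded densities of the $Y_i$), and this is precisely where the hypothesis $p>-1$ enters.
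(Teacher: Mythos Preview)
Your strategy is correct and arrives at exactly the same factorization identity $(\ast)$ that the paper uses, but the route to $(\ast)$ is genuinely different. The paper does not compute: instead it invokes the Schechtman--Zinn/Rachev--R\"uschendorf fact that, with $R=\bigl(\sum_i |Y_i|^q\bigr)^{1/q}$, the random variable $R$ is independent of $Y/R$. Writing $X=\frac{R}{(R^q+E)^{1/q}}\cdot\frac{Y}{R}$, independence of $R$ and $Y/R$ (together with independence of $E$) immediately gives $\mb{E}\bigl|\sum a_iX_i\bigr|^p=\mb{E}\bigl|\frac{R}{(R^q+E)^{1/q}}\bigr|^p\cdot\mb{E}\bigl|\sum a_i\frac{Y_i}{R}\bigr|^p$, and a second use of the same independence gives $\mb{E}\bigl|\sum a_i\frac{Y_i}{R}\bigr|^p=\mb{E}|R|^{-p}\cdot\mb{E}\bigl|\sum a_iY_i\bigr|^p$; this yields $(\ast)$ in one line, uniformly for all $p>-1$ (including $p=0$), with no integral representations or case analysis. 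Your Gamma--integral/rescaling argument, by contrast, is self-contained (no need to cite the independence lemma) and produces the explicit value $C_{p,q,n}^p=\Gamma(1+n/q)/\Gamma(1+(n+p)/q)$, but at the cost of treating $p>0$ and $p\in(-1,0)$ separately; the negative-moment case you have only sketched, and while it can be made rigorous (e.g.\ via the identity $x^{\alpha}=\frac{\alpha}{\Gamma(1-\alpha)}\int_0^\infty t^{-\alpha-1}(1-e^{-tx})\,\mathrm{d}t$ for $0<\alpha<1$ and its iterate), the paper's approach sidesteps this entirely.
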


The derivation of the sharp constants in the corresponding Khintchine inequality is postponed to Corollary \ref{cor:ballconstants}. Given Corollary \ref{cor:ballkhin} and the result of \cite{LO95}, which corresponds to the unit cube $B_\infty^n$, the following question seems natural.

\begin{question} \label{question:khin}
Let $X=(X_1,\ldots,X_n)$ be a random vector uniformly distributed on $B_q^n$ for some $q\in(2,\infty)$. What are the sharp constants in the Khintchine inequalities for $X$?
\end{question}

It will be evident from the proof of Corollary \ref{cor:ballkhin} that Question \ref{question:khin} is equivalent to finding the sharp Khintchine constants for $\mu_q^n$, where $q\in(2,\infty)$. We conjecture that there exists a Schur monotonicity result, identical to the one in \eqref{eq:khinunif}.

\medskip

\noindent {\it Remark added in proofs.} In the recent preprint \cite{ENT18}, Question 6 is resolved by different methods than those of the present article. In particular, it is shown there that the sharp constants in the Khinchine inequality for $\mu_q^n$, where $q\in(2,\infty)$, are the same as those for $\mu_\infty^n$ when $p\geq1$.


\subsection{Entropy comparison} For a random variable $X$ with density function $f:\R\to\R_+$ the Shannon entropy of $X$ is a fundamental quantity in information theory, defined as
$$\ent(X) = -\int_\R f(x) \log f(x) \diff x = \mathbb{E} [ -\log f(X)],$$
provided that the integral exists. Jensen's inequality yields that among random variables with a fixed variance, the Gaussian random variable maximizes the entropy. Moreover, Pinsker's inequality (see, e.g., \cite[Theorem~1.1]{GL10}) asserts that if a random variable $X$ has variance one and $G$ is a standard Gaussian random variable, then the entropy gap $\ent(G)-\ent(X)$ dominates the total variation distance between the laws of $X$ and $G$. Consequently, the entropy can be interpreted as a measure of \emph{closeness to Gaussianity}. The following question seems natural.

\begin{question} \label{question:entropy}
Fix $n\geq2$ and suppose that $X_1,\ldots,X_n$ are i.i.d. random variables with finite variance. For which unit vectors $(a_1,\ldots,a_n)$ is the entropy of $\sum_{i=1}^n a_iX_i$ maximized?
\end{question}
\noindent The constraint $\sum_{i=1}^n a_i^2 =1$ on $(a_1,\ldots,a_n)$ plainly fixes the variance of the weighted sum $\sum_{i=1}^n a_iX_i$ and the answer would give the corresponding \emph{most Gaussian} weights.

The first result concerning the entropy of weighted sums of i.i.d. random variables was the celebrated entropy power inequality, first stated by Shannon in \cite{SW49} and rigorously proven by Stam in \cite{Sta59}. An equivalent formulation of the Shannon-Stam inequality (see \cite{Lie78}) reads as follows. For every $\lambda\in[0,1]$ and independent random variables $X,Y$ we have
\begin{equation} \label{eq:linepi}
\ent(\sqrt{\lambda}X+\sqrt{1-\lambda}Y) \geq \lambda \ent(X)+(1-\lambda)\ent(Y),
\end{equation}
provided that all the entropies exist. It immediately follows from \eqref{eq:linepi} that if $X_1,\ldots,X_n$ are i.i.d. random variables with finite variance and $(a_1,\ldots,a_n)$ is a unit vector, then we have
\begin{equation} \label{eq:lb}
\ent\Big(\sum_{i=1}^n a_iX_i\Big) \geq \ent(X_1).
\end{equation}
In other words, the corresponding minimum in Question \ref{question:entropy} is achieved at the direction vectors $e_i$.

Moreover, a deep monotonicity result for Shannon entropy was obtained in the work of Artstein-Avidan, Ball, Barthe and Naor \cite{ABBN04}. The authors proved that for any random variable $X$ with finite variance and any $n\geq 1$ we have
\begin{equation} \label{eq:mon}
\ent\Big(\sum_{i=1}^n \frac{1}{\sqrt{n}} X_i \Big) \leq \ent\Big( \sum_{i=1}^{n+1}\frac{1}{\sqrt{n+1}}X_i \Big),
\end{equation}
where $X_1,X_2,\ldots$ are independent copies of $X$.

Given inequality \eqref{eq:mon}, a natural guess for Question \ref{question:entropy} would be that the vector $\big(\frac{1}{\sqrt{n}},\ldots,\frac{1}{\sqrt{n}}\big)$ is a maximizer for any $n\geq 2$ and for any square-integrable random variable $X$. However, this is not correct in general. In \cite[Proposition~2]{BNT16}, the authors showed that for a symmetric random variable $X$ uniformly distributed on the union of two intervals the Shannon entropy of the weighted sum $\sqrt{\lambda}X_1 + \sqrt{1-\lambda} X_2$ is not maximized at $\lambda = \frac{1}{2}$.

Nonetheless, for Gaussian mixtures it is possible to obtain the comparison for R\'enyi entropies which confirms the natural guess. Recall that for a random variable $X$ with density $f:\R\to\R_+$ and $\alpha>0$, $\alpha\neq1$, the R\'enyi entropy of order $\alpha$ of $X$ is defined as
$$h_\alpha(X) = \frac{1}{1-\alpha} \log \Big( \int_\R f^\alpha(x) \diff x\Big).$$
Note that if for some $\alpha > 1$ the integral of $f^\alpha$ is finite, then $h_\alpha(X)$ tends to $\ent(X)$ as $\alpha\to1^+$ (see \cite[Lemma~V.3]{BC15}), which we shall also denote by $h_1(X)$ for convenience.

\begin{theorem} \label{thm:entropycomp}
Let $X_1,\ldots,X_n$ be i.i.d. Gaussian mixtures and $\alpha\geq1$. Then for two vectors $(a_1,\ldots,a_n),(b_1,\ldots,b_n)$ in $\R^n$ we have
\begin{equation} \label{eq:renyicomp}
(a_1^2,\ldots,a_n^2) \preceq (b_1^2,\ldots,b_n^2) \ \Longrightarrow \ h_\alpha\Big(\sum_{i=1}^n a_i X_i \Big) \geq h_\alpha\Big(\sum_{i=1}^n b_iX_i\Big),
\end{equation}
provided that all the entropies are finite. In particular, for every unit vector $(a_1,\ldots,a_n)$
\begin{equation} \label{eq:entropycomp}
\ent(X_1) \leq \ent\Big(\sum_{i=1}^n a_i X_i\Big) \leq \ent\Big( \frac{X_1+\cdots+X_n}{\sqrt{n}}\Big).
\end{equation}
\end{theorem}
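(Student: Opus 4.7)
The plan is to reformulate the entropy comparison as a Schur convexity statement for the $L^\alpha$-norm of the density of $S:=\sum_i a_iX_i$, and then establish it by a two-derivative computation. Writing $X_i \simd Y_iZ_i$ with $Y_i>0$ and $Z_i\sim N(0,1)$ independent, and conditioning on $(Y_1,\ldots,Y_n)$, the sum $S$ is centered Gaussian with random variance $V := \sum_i a_i^2Y_i^2$, so $S\simd\sqrt{V}\,Z$ and its density equals $f_S(x) = \mb{E}[\phi_V(x)]$ with $\phi_v(x) := (2\pi v)^{-1/2}e^{-x^2/(2v)}$. For $\alpha>1$, $h_\alpha(S) = -\frac{1}{\alpha-1}\log\int f_S^\alpha\,\dd x$, so \eqref{eq:renyicomp} reduces to showing that $I(a_1^2,\ldots,a_n^2) := \int f_S^\alpha\,\dd x$ is Schur \emph{convex} in $(a_i^2)$; the Shannon case $\alpha=1$ will then follow by sending $\alpha\to 1^+$.

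By symmetry of $I$ in $(a_i^2)$ and the Schur-Ostrowski criterion, it suffices to show that, for any fixed $a_3,\ldots,a_n$, the restriction of $I$ to the segment $a_1^2+a_2^2 = c$ is Schur convex. Writing $W_i := Y_i^2$ and $R := \sum_{j\ge 3}a_j^2Y_j^2$ (a positive random variable independent of $W_1,W_2$), and setting $V(s) := sW_1 + (c-s)W_2 + R$, this restriction is $I(s) := \int \mb{E}[\phi_{V(s)}(x)]^\alpha\,\dd x$. Exchangeability of $W_1,W_2$ gives $I(s) = I(c-s)$, so it suffices to establish that $I$ is convex in $s$. Differentiating twice under the integral, using $\partial_sV = W_1-W_2$ and the heat equation $\partial_v\phi_v = \tfrac{1}{2}\partial_x^2\phi_v$, I obtain
\[
I''(s) \;=\; \alpha(\alpha-1)\int f_S^{\alpha-2}\bigl(\partial_sf_S\bigr)^2\,\dd x \;+\; \alpha\,\mb{E}\bigl[(W_1-W_2)^2\,m''(V(s))\bigr],
\]
where $m(v) := \int f_S^{\alpha-1}(x)\phi_v(x)\,\dd x$. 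For $\alpha\ge 1$ the first term is manifestly nonnegative, and the entire right-hand side will be nonnegative as soon as $m$ is convex on $(0,\infty)$.

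The crux is therefore the following convexity lemma: \emph{for any Gaussian mixture density $f$ and any $\alpha\ge 1$, the map $v\mapsto \int f^{\alpha-1}(x)\phi_v(x)\,\dd x$ is convex on $(0,\infty)$}. To prove it, note that by Theorem \ref{thm:charact} the function $f(\sqrt{t})$ is the Laplace transform at $t$ of a nonnegative measure, so H\"older's inequality shows that $\log f(\sqrt{t})$ is convex in $t$; hence $f^{\alpha-1}(\sqrt{t}) = \exp\bigl((\alpha-1)\log f(\sqrt{t})\bigr)$ is log-convex, and in particular convex, in $t$ for every $\alpha\ge 1$. After substituting $y = x^2$ and then $z = y/(2v)$, the inner integral becomes
\[
m(v) \;=\; \pi^{-1/2}\int_0^\infty f^{\alpha-1}\bigl(\sqrt{2vz}\bigr)\,e^{-z}\,z^{-1/2}\,\dd z,
\]
and for each fixed $z>0$ the integrand is convex in $v$ (convex function of $t$ composed with the affine map $v\mapsto 2vz$), so $m$ is convex as an integral of convex functions against a positive measure. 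Feeding $m''\ge 0$ back into the formula for $I''(s)$ yields $I''(s)\ge 0$, which together with the symmetry $I(s) = I(c-s)$ establishes Schur convexity of $I$ and hence Schur concavity of $h_\alpha$. The hard part is precisely the convexity of $m$: without the Gaussian mixture hypothesis, $f(\sqrt{t})$ would lose its log-convexity, the sign of the second summand in $I''(s)$ would become unclear, and the nonnegative $(\alpha-1)$-term would not suffice to control it.
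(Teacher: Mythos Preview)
Your argument is correct, but the paper reaches the same conclusion by a considerably shorter path that avoids second derivatives and the heat equation altogether. Both proofs rest on the same key observation: since $t\mapsto f(\sqrt{t})$ is completely monotone, $-\log f(\sqrt{t})$ is concave and hence $t\mapsto f^{\alpha-1}(\sqrt{t})$ is convex for $\alpha\ge 1$. The paper exploits this directly via Marshall--Proschan: writing $\int f_a^\alpha = \mb{E}\big[f_a^{\alpha-1}(X_a)\big] = \mb{E}_Z\mb{E}_Y\,f_a^{\alpha-1}\big((\sum a_i^2Y_i^2)^{1/2}Z\big)$ and applying Schur convexity of the inner expectation in $(a_i^2)$ gives $\int f_a^\alpha \le \int f_a^{\alpha-1}f_b$, and a single application of H\"older then yields $\int f_a^\alpha \le \int f_b^\alpha$. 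The Shannon case $\alpha=1$ is handled separately via the variational formula $\ent(X_b)\le \mb{E}[-\log f_a(X_b)]$ and Marshall--Proschan applied to the concave function $-\log f_a(\sqrt{\cdot})$, with no need to pass to the limit $\alpha\to 1^+$.

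Your route---Schur--Ostrowski, restriction to a two-coordinate segment, the identity $I''(s)=\alpha(\alpha-1)\int f_S^{\alpha-2}(\partial_s f_S)^2 + \alpha\,\mb{E}[(W_1-W_2)^2 m''(V)]$, and the convexity of $m$---is a valid alternative and makes the role of the heat semigroup explicit; the change of variables reducing $m(v)$ to $\pi^{-1/2}\int_0^\infty f^{\alpha-1}(\sqrt{2vz})e^{-z}z^{-1/2}\,dz$ is a nice way to see the convexity. The cost is the differentiation-under-the-integral bookkeeping and the slightly indirect treatment of $\alpha=1$; the benefit of the paper's argument is that Marshall--Proschan plus H\"older dispatches the whole inequality in a few lines once the convexity of $f^{\alpha-1}(\sqrt{\cdot})$ is in hand.
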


Extensions of inequality \eqref{eq:entropycomp}, even for the uniform measure on the cube, appear to be unknown.

\begin{question}
Let $U_1,\ldots,U_n$ be i.i.d. random variables, each uniformly distributed on $[-1,1]$. Is it correct that for every unit vector $(a_1,\ldots,a_n)$
\begin{equation} \label{eq:entcube}
\ent\Big(\sum_{i=1}^n a_iU_i \Big) \leq \ent\Big( \frac{U_1+\cdots+U_n}{\sqrt{n}}\Big) \ ?
\end{equation}
\end{question}

Geometrically, this would mean that, in the entropy sense, the \emph{most Gaussian} direction of the unit cube $B_\infty^n$ is the main diagonal.

We close this subsection with an intriguing question in the spirit of the well known fact that a Gaussian random variable has maximum entropy among all random variables with a specified variance. Note that Theorem \ref{thm:entropycomp} along with
$$(1,1,0,\ldots,0) \succeq \Big( 1, \frac{1}{2}, \frac{1}{2}, 0,\ldots,0\Big) \succeq \cdots \succeq \Big(1, \frac{1}{n}, \ldots, \frac{1}{n}\Big)$$
imply that for every i.i.d. Gaussian mixtures $X_1,X_2,\ldots$ the sequence $\ent\big( X_1+\frac{X_2+\cdots+X_{n+1}}{\sqrt{n}}\big)$, $n = 1, 2, \ldots$ is increasing and in particular
$$\ent(X_1+X_2) \leq \ent\Big( X_1 + \frac{X_2+\cdots+X_{n+1}}{\sqrt{n}}\Big).$$
Thus, the following result should not be surprising.

\begin{proposition} \label{prop:swap}
Let $X_1, X_2$ be independent Gaussian mixtures with finite variance. Then
\begin{equation} \label{eq:swap1}
\ent(X_1+X_2) \leq \ent(X_1+G),
\end{equation}
where $G$ is a Gaussian random variable independent of $X_1$ having the same variance as $X_2$.
\end{proposition}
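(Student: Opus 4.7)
The plan is to exploit the Gaussian mixture structure of both $X_1$ and $X_2$ to represent the two sides as Gaussian mixtures with different mixing-variance distributions related by the convex order. Writing $X_i = Y_i Z_i$ with $Y_i > 0$ and $Z_i \sim N(0,1)$ all independent, and setting $c^2 = \E Y_2^2$, conditioning on $(Y_1,Y_2)$ and using the stability of Gaussians under convolution yields
\[
X_1 + X_2 \simd \sqrt{Y_1^2+Y_2^2}\,Z \quad\text{and}\quad X_1 + G \simd \sqrt{Y_1^2+c^2}\,Z',
\]
where $Z,Z'$ are standard Gaussians independent of their respective mixing variables. Since $Y_2\perp Y_1$, the mixing variance $T := Y_1^2+c^2 = \E[Y_1^2+Y_2^2\mid Y_1]$ is a conditional expectation of $S := Y_1^2+Y_2^2$, so $T \leq_{\mathrm{cx}} S$ in the convex order, with the common mean $\E Y_1^2 + c^2$.

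This reduces the proposition to a Schur-concavity claim for mixture entropy: the functional $\Phi(\rho) := \ent\bigl(\int_0^\infty g_t\,\dd\rho(t)\bigr)$ on probability measures $\rho$ on $(0,\infty)$ (with $g_t$ the density of $N(0,t)$) is decreasing in the convex order. By Strassen's martingale characterization and a standard approximation, it is enough to handle the elementary ``spread'' step: if $\rho$ has an atom of mass $p$ at $\bar t$ while $\rho'$ replaces it by atoms at $t_1,t_2$ with masses $\alpha p,(1-\alpha) p$ satisfying $\alpha t_1+(1-\alpha)t_2=\bar t$, then $\Phi(\rho)\geq \Phi(\rho')$. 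The natural approach is to interpolate via $t_i(\lambda):=\bar t+\lambda(t_i-\bar t)$ and $f_\lambda := p[\alpha g_{t_1(\lambda)}+(1-\alpha)g_{t_2(\lambda)}]+R$, where $R$ is the common remainder of the two densities; using the heat equation $\partial_t g_t = \tfrac12 \partial_{xx}g_t$ together with two integrations by parts gives
\[
\frac{\dd}{\dd\lambda}\ent(f_\lambda) = -\frac{p}{2}\int \tilde h_\lambda(x)\,\partial_{xx}\log f_\lambda(x)\,\dd x,
\]
where $\tilde h_\lambda := \alpha(t_1-\bar t)g_{t_1(\lambda)}+(1-\alpha)(t_2-\bar t)g_{t_2(\lambda)}$ integrates to zero.

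The main obstacle is signing this derivative. Gaussian mixtures are generically not log-concave, so $\partial_{xx}\log f_\lambda$ can change sign and no naive pointwise argument applies; one must use the specific structure of $\tilde h_\lambda$ as a signed difference of scaled Gaussians together with the mixture form of $f_\lambda$. Notably, the straightforward combination of Costa's concavity and Jensen's inequality only delivers the \emph{conditional} bound $\E_{Y_2}[\ent(X_1+\sqrt{Y_2^2}\,Z)] \leq \ent(X_1+G)$, which is strictly weaker than the proposition since conditioning reduces entropy in the wrong direction. A potentially cleaner alternative route is via the CLT: taking i.i.d.\ copies $X_2^{(j)}$ of $X_2$ independent of $X_1$ with $\Sigma_n := (X_2^{(1)}+\cdots+X_2^{(n)})/\sqrt n \xrightarrow{d} G$, one could aim to establish the monotonicity $\ent(X_1+X_2)\leq \ent(X_1+\Sigma_n)$---a non-i.i.d.\ refinement of Theorem \ref{thm:entropycomp} accommodating the auxiliary summand $X_1$---and then pass to the limit using continuity of Shannon entropy on the variance-bounded family $\{X_1+\Sigma_n\}_{n\geq 1}$.
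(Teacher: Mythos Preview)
Your reduction is correct: with $S=Y_1^2+Y_2^2$ and $T=Y_1^2+\E Y_2^2=\E[S\mid Y_1]$, the proposition is precisely the claim $\ent(\sqrt{S}\,Z)\le\ent(\sqrt{T}\,Z)$ with $T\le_{\mathrm{cx}}S$. But the argument then stalls. Your attempt to show that $\Phi(\rho)=\ent\bigl(\int g_t\,\dd\rho\bigr)$ is antitone in the convex order via elementary spreads and the heat equation runs aground exactly where you say it does: Gaussian mixtures are not log-concave, and you give no mechanism for signing $\int\tilde h_\lambda\,\partial_{xx}\log f_\lambda$. The CLT alternative is also left open, since it presupposes a non-i.i.d.\ strengthening of Theorem~\ref{thm:entropycomp} (with the auxiliary summand $X_1$) that you do not establish, plus a continuity-of-entropy step you only assert.

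The paper closes the gap with one idea you are missing. Let $g$ be the density of $X_1+G$. The variational formula (Gibbs' inequality) gives
\[
\ent(X_1+X_2)\;\le\;\E\bigl[-\log g(X_1+X_2)\bigr]\;=\;\E_{(Y_1,Z)}\,\E_{Y_2}\bigl[-\log g\bigl((Y_1^2+Y_2^2)^{1/2}Z\bigr)\bigr].
\]
Now $X_1+G$ is itself a Gaussian mixture, so by Bernstein's theorem $g(\sqrt{x})=\int_0^\infty e^{-tx}\,\dd\mu(t)$; H\"older's inequality then shows $x\mapsto g(\sqrt{x})$ is log-convex, i.e.\ $x\mapsto -\log g(\sqrt{x})$ is \emph{concave}. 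A single application of Jensen in $Y_2$ yields
\[
\E_{Y_2}\bigl[-\log g\bigl((Y_1^2+Y_2^2)^{1/2}Z\bigr)\bigr]\;\le\;-\log g\bigl((Y_1^2+\E Y_2^2)^{1/2}Z\bigr),
\]
and taking the outer expectation gives $\ent(X_1+X_2)\le\E[-\log g(X_1+G)]=\ent(X_1+G)$. The point is that the concavity you need is that of $-\log g(\sqrt{\cdot})$ for the density $g$ of the \emph{target} random variable $X_1+G$, not of the density $f$ of $X_1+X_2$; the variational formula is precisely what lets you pass from $f$ to $g$ at no cost. This is the same mechanism that drives the proof of Theorem~\ref{thm:entropycomp}, and it would equally well supply the ``non-i.i.d.\ refinement'' you were looking for in your CLT route.
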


We pose a question as to whether this is true in general, under the additional assumption that $X_1,X_2$ are identically distributed.

\begin{question} \label{quest:swap}
Let $X_1,X_2$ be i.i.d. continuous random variables with finite variance. Is it true that
\begin{equation} \label{eq:swap}
\ent(X_1+X_2) \leq \ent(X_1+G),
\end{equation}
where $G$ is a Gaussian random variable independent of $X_1$ having the same variance as $X_2$?
\end{question}

The preceding entropy comparison results will be proven in Section \ref{sec:3}.


\subsection{Geometric properties of Gaussian mixtures}

Recall that a function $\varphi:\R^n\to\R_+$ is called log-concave if $\varphi=e^{-V}$ for some convex function $V:\R^n\to(-\infty,\infty]$. A measure $\mu$ on $\R^n$ is called log-concave if for every Borel sets $A,B\subseteq\R^n$ and $\lambda\in(0,1)$ we have
\begin{equation} \label{eq:prekopa1}
\mu(\lambda A + (1-\lambda)B) \geq \mu(A)^\lambda \mu(B)^{1-\lambda}.
\end{equation}
A random vector is called log-concave if it is distributed according to a log-concave measure. Two important examples of log-concave measures on $\R^n$ are Gaussian measures and uniform measures supported on convex bodies. The geometry of log-concave measures, in analogy with the asymptotic theory of convex bodies, has been intensively studied and many major results are known (see, for example, the monograph \cite{AGM15}). The Gaussian measure, however, possesses many delicate properties which are either wrong or whose validity is still unknown for other log-concave measures. In what follows, we will explain how to extend, in the context of Gaussian mixtures, two such properties: the B-inequality, proven by Cordero-Erausquin, Fradelizi and Maurey in \cite{CFM04}, and the Gaussian correlation inequality, recently proven by Royen in \cite{Roy14}.

Choosing the sets $A,B$ in \eqref{eq:prekopa1} to be dilations of a fixed convex set $K\subseteq\R^n$ we deduce that for every $a,b>0$ and $\lambda\in(0,1)$
\begin{equation} \label{eq:prekopa2}
\mu\big( (\lambda a +(1-\lambda) b) K\big) \geq \mu(aK)^\lambda \mu(bK)^{1-\lambda}.
\end{equation}
The (weak) B-inequality provides a substantial strengthening of \eqref{eq:prekopa2} for Gaussian measure, under an additional symmetry assumption: for any origin symmetric convex set $K\subseteq\R^n$, $a,b>0$ and $\lambda\in(0,1)$
\begin{equation} \label{eq:Bineq1}
\gamma_n (a^\lambda b^{1-\lambda} K) \geq \gamma_n (aK)^\lambda \gamma_n(bK)^{1-\lambda},
\end{equation}
or, in other words, the function $t \mapsto \gamma_n(e^tK)$ is log-concave on $\R$. In fact, in \cite{CFM04} the following strong form of the above inequality was proven.

\begin{theorem} [strong B-inequality, \cite{CFM04}] \label{thm:Bineq}
Let $K$ be a symmetric convex set in $\R^n$. Then, the function
\begin{equation} \label{eq:Bthm}
\R^n \ni (t_1,\ldots,t_n) \longmapsto \gamma_n \big(\Delta(e^{t_1},\ldots,e^{t_n}) K\big)
\end{equation}
is log-concave on $\R^n$, where $\Delta(s_1,\ldots,s_n)$ is the diagonal $n\times n$ matrix with entries $s_1,\ldots,s_n$.
\end{theorem}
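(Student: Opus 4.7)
The plan is to reduce the claim, via a change of variables and differentiation under the integral, to a Poincaré-type covariance inequality for the standard Gaussian measure conditioned on the symmetric convex body $D_t K := \Delta(e^{t_1},\ldots,e^{t_n}) K$, and then to establish this inequality by exploiting the symmetry of $K$.

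Setting $I(t) := \gamma_n(D_t K)$, the substitution $y = D_t x$ yields
\begin{equation*}
I(t) = \int_K \frac{e^{\sum_i t_i}}{(2\pi)^{n/2}} \exp\Big(-\tfrac{1}{2}\sum_i e^{2 t_i} x_i^2\Big) \diff x.
\end{equation*}
Let $\pi_t$ denote the probability measure on $D_t K$ obtained by restricting and normalizing $\gamma_n$, and write $Y = (Y_1,\ldots,Y_n)$ for the coordinate vector under $\pi_t$. Differentiating $\log I(t)$ twice under the integral gives
\begin{equation*}
\partial_{t_i} \log I(t) = 1 - \E_{\pi_t}[Y_i^2], \qquad \partial^2_{t_i t_j} \log I(t) = \mathrm{Cov}_{\pi_t}(Y_i^2, Y_j^2) - 2\delta_{ij}\,\E_{\pi_t}[Y_i^2],
\end{equation*}
so joint concavity of $\log I$ on $\R^n$ is equivalent to the covariance inequality
\begin{equation*}
\mathrm{Var}_{\pi_t}\Big(\sum_i v_i Y_i^2\Big) \;\leq\; 2 \sum_i v_i^2\, \E_{\pi_t}[Y_i^2] \qquad (v \in \R^n). \quad (\ast)
\end{equation*}

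For the even quadratic form $f(y) = \sum_i v_i y_i^2$ one has $|\nabla f|^2 = 4 \sum_i v_i^2 y_i^2$, so $(\ast)$ is precisely a Poincaré inequality on $(\R^n, \pi_t)$ with constant $\tfrac{1}{2}$ applied to $f$. The classical Brascamp--Lieb variance inequality only gives $\mathrm{Var}_{\pi_t}(f) \leq \E_{\pi_t}[|\nabla f|^2]$, i.e.\ constant $1$; the factor-of-two improvement reflects the well-known fact that the Ornstein--Uhlenbeck generator on $L^2(\gamma_n)$ has spectral gap $2$ rather than $1$ on its even subspace. Transferring this sharpening from the unrestricted Gaussian to its restriction to the symmetric convex body $D_t K$ is the main difficulty.

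To overcome it, I would exploit the symmetry of $D_t K$, which makes the decomposition of $L^2(\pi_t)$ into even and odd parts orthogonal and leaves parity invariant for the relevant reflected Ornstein--Uhlenbeck-type generator on $D_t K$. One natural route is an integration-by-parts argument along this semigroup, using that the gradient components $2 v_i y_i$ of the even function $f$ are odd and should therefore enjoy an improved Brascamp--Lieb-type bound on the odd subspace; an alternative route is to first establish the one-dimensional case of $(\ast)$, where $K$ is a symmetric interval and $(\ast)$ reduces to a scalar inequality amenable to a direct Hermite expansion, and then to bootstrap to general $n$ by induction, conditioning on all but one coordinate of $\pi_t$ and using the tensorial structure of $D_t$. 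I expect this spectral or inductive step to be the hardest part of the argument.
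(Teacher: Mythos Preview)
The paper does not prove this theorem; it is quoted from \cite{CFM04} and used as a black box in the proof of Theorem~\ref{thm:Bineqmixt}. So there is no ``paper's own proof'' to compare against, and your proposal should be measured against the original argument of Cordero--Erausquin, Fradelizi and Maurey.

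Your reduction is correct and is exactly the first half of the CFM proof: the Hessian computation is right, and log-concavity of $t\mapsto \gamma_n(D_tK)$ is indeed equivalent to the variance inequality $(\ast)$ for the Gaussian conditioned on the symmetric convex body $D_tK$. You also correctly identify that Brascamp--Lieb alone misses by a factor~$2$ and that symmetry of $K$ (hence evenness of $f$ and oddness of $\partial_i f$) is the mechanism for the improvement.

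Where your proposal has a genuine gap is in the execution of $(\ast)$. Your second route---prove the one-dimensional case and ``bootstrap by induction, conditioning on all but one coordinate''---does not work: the conditional law of one coordinate of $\pi_t$ given the others is a truncated Gaussian on a symmetric interval \emph{only} when $K$ is a product of intervals; for a general symmetric convex $K$ the slices need not be symmetric about the origin, and there is no tensorisation structure to exploit. Your first route (semigroup/spectral) is in the right spirit but left as a heuristic. What CFM actually do is an $L^2$ H\"ormander-type duality: solve $Lu=f-\E_{\pi_t}f$ for the restricted Ornstein--Uhlenbeck generator $L$, integrate by parts to get $\mathrm{Var}_{\pi_t}(f)=\int|\nabla u|^2\,d\pi_t$, and then use the Bochner commutation identity together with the key observation that, since $f$ is even, $u$ is even and hence each $\partial_i u$ is \emph{odd}, so $\E_{\pi_t}[\partial_i u]=0$ and the standard Brascamp--Lieb inequality can be applied once more to the $\partial_i u$. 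This second application is precisely what produces the missing factor~$2$. Your sketch gestures at this (``gradient components $2v_iy_i$ are odd and should enjoy an improved bound''), but the actual object to which the improved bound is applied is $\nabla u$, not $\nabla f$, and making this precise on a domain with boundary requires the careful $L^2$ argument in \cite{CFM04}.
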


The authors also proved that the same conclusion holds for an arbitrary unconditional log-concave measure, provided that the convex set $K$ is unconditional as well (see \cite[Section~5]{CFM04} for further details). Furthermore, they asked whether the weak B-inequality holds for any symmetric log-concave measure and symmetric convex set $K$; this is currently known as the B-conjecture. We note that in \cite{Sar16}, Saroglou confirmed the B-conjecture on the plane (the case of uniform measures on convex planar sets had previously been treated in \cite{LBo14}). Our result in this direction is the following theorem.

\begin{theorem} \label{thm:Bineqmixt}
Let $X_1,\ldots,X_n$ be Gaussian mixtures such that $X_i$ has the same distribution as $Y_i Z_i$, where $Y_i$ is positive and $Z_i$ is a standard Gaussian random variable independent of $Y_i$. Denote by $\mu_i$ the law of $X_i$ and by $\mu$ the product measure $\mu_1\times \cdots \times \mu_n$. If, additionally, $\log Y_i$ is log-concave for each $i$, then for every symmetric convex set $K$ in $\R^n$ the function
\begin{equation} \label{eq:Bthmmixt}
\R^n \ni (t_1,\ldots,t_n) \longmapsto \mu(\Delta(e^{t_1},\ldots,e^{t_n})K)
\end{equation}
is log-concave on $\R^n$.
\end{theorem}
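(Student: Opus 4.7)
The plan is to condition on the mixing variables $Y_i$ in order to reduce the problem to the classical B-inequality (Theorem \ref{thm:Bineq}) combined with the Prékopa–Leindler inequality. The key observation is that, conditionally on $Y = (Y_1,\ldots,Y_n)$, the vector $X = (X_1,\ldots,X_n)$ is a centered Gaussian vector with diagonal covariance, so $\mu\bigl(\Delta(e^{t_1},\ldots,e^{t_n})K\bigr)$ becomes an average of Gaussian measures of anisotropically dilated copies of $K$.

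First I would set $W_i := \log Y_i$, $W = (W_1,\ldots,W_n)$, and establish the representation
\[
\mu\bigl(\Delta(e^{t_1},\ldots,e^{t_n})K\bigr) = \E\bigl[\gamma_n\bigl(\Delta(e^{t_1-W_1},\ldots,e^{t_n-W_n})K\bigr)\bigr] = (\Phi * f_W)(t),
\]
where $\Phi(s_1,\ldots,s_n) := \gamma_n\bigl(\Delta(e^{s_1},\ldots,e^{s_n})K\bigr)$ and $f_W$ denotes the density of $W$ on $\R^n$. This is a direct conditional-expectation calculation: given $Y = y$, the vector $X$ is distributed as $\Delta(y_1,\ldots,y_n)Z$ for $Z$ a standard Gaussian, so $X \in \Delta(e^{t_1},\ldots,e^{t_n})K$ if and only if $Z \in \Delta(e^{t_1-\log y_1},\ldots,e^{t_n-\log y_n})K$, and integrating against the law of $Y$ yields the convolution form after the change of variables $w = \log y$.

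Next I would verify log-concavity of both factors in the convolution. Log-concavity of $\Phi$ on $\R^n$ is precisely the content of the Gaussian B-inequality (Theorem \ref{thm:Bineq}) applied to the symmetric convex body $K$. Log-concavity of $f_W$ follows from the independence of the $Y_i$'s, so that $f_W(w) = \prod_{i=1}^n f_{W_i}(w_i)$, together with the standing assumption that each marginal density $f_{W_i}$ is log-concave on $\R$: then $-\log f_W$ is a sum of univariate convex functions of distinct coordinates, hence convex on $\R^n$. Finally, the convolution of two log-concave functions on $\R^n$ is log-concave by the Prékopa–Leindler inequality, which is exactly the conclusion of the theorem.

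The conceptually important step is the conditional identity recasting the quantity in \eqref{eq:Bthmmixt} as a convolution; everything else is a routine application of standard log-concavity machinery, so there is no genuine obstacle. The only minor subtlety is a boundary issue: if some $Y_i$ is constant (as in the purely Gaussian case) or otherwise fails to admit a density, the corresponding factor of $f_W$ should be interpreted as a log-concave measure rather than a density, but Prékopa–Leindler still applies in that generality and the argument goes through unchanged.
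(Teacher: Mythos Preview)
Your proposal is correct and is essentially the same argument as the paper's: the paper conditions on the $Y_i$, performs the change of variables $y_i = e^{-s_i}$ to write $\mu(\Delta(e^{t})K)$ as an integral whose integrand is log-concave in $(s,t)$ by the Gaussian B-inequality and the assumption on the $Y_i$, and concludes via Pr\'ekopa--Leindler. Your convolution formulation $(\Phi * f_W)(t)$ is just a cleaner repackaging of exactly the same integral identity and the same two ingredients.
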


We do not know whether the additional assumption on the $Y_i$ can be omitted, but we verified (Corollary \ref{cor:Bineqexamples}) that both the measure with density proportional to $e^{-|t|^p}$ and the symmetric $p$-stable measure have this property for $p\in(0,1]$, whereas they do not for $p\in(1,2)$. Notice that the corresponding product measures, apart from $\mu_1^n$, are not log-concave. We note that extending the B-inequality to $\mu_p^n$, where $p>2$, is of importance. For instance, it has been proven by Saroglou \cite{Sar15} that the weak B-inequality for $\mu_\infty^n$ (that is, the uniform measure on the unit cube $B_\infty^n$) would imply the conjectured logarithmic Brunn-Minkowski inequality (see \cite{BLYZ12}) in its full generality. The proof of Theorem \ref{thm:Bineqmixt} will be given in Section \ref{sec:4}.

An application of the B-inequality for Gaussian measure is a small ball probability estimate due to Lata\l a and Oleszkiewicz \cite{LO05}. For a symmetric convex set $K$ denote by $r(K)$ its inradius, i.e. the largest $r>0$ such that $rB_2^n \subseteq K$. In \cite{LO05}, the authors used Theorem \ref{thm:Bineq} along with the Gaussian isoperimetric inequality (see, e.g., \cite[Theorem~3.1.9]{AGM15}) to prove that if $K\subseteq\R^n$ is a symmetric convex set with $\gamma_n(K) \leq 1/2$, then
\begin{equation} \label{eq:smallball1}
\gamma_n(tK) \leq (2t)^{\frac{r(K)^2}{4}} \gamma_n(K), \ \ \ \mbox{for every } t\in[0,1].
\end{equation}
Using Theorem \ref{thm:Bineqmixt} and an isoperimetric-type estimate of Bobkov and Houdr\'e from \cite{BH97} we deduce the following corollary.

\begin{corollary} \label{cor:smallball}
Let $K$ be a symmetric convex set in $\R^n$ such that $\mu_1^n(K)\leq 1/2$. Then
\begin{equation} \label{eq:smallball2}
\mu_1^n(tK) \leq t^{\frac{r(K)}{2\sqrt{6}}} \mu_1^n(K), \ \ \ \mbox{for every } t\in[0,1].
\end{equation}
\end{corollary}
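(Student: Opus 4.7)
The plan is to combine Theorem \ref{thm:Bineqmixt} for $\mu_1^n$ with the isoperimetric inequality of Bobkov-Houdr\'e from \cite{BH97}. Since by Corollary \ref{cor:Bineqexamples} the density $\mu_1$ (proportional to $e^{-|t|}$) satisfies the hypothesis of Theorem \ref{thm:Bineqmixt}, I would specialize the conclusion of that theorem to the diagonal $t_1 = \cdots = t_n = s$, where $\Delta(e^s,\ldots,e^s)K = e^s K$. Log-concavity along this line says that $f(s) := \log\mu_1^n(e^s K)$ is concave on $\R$, so the tangent line bound $f(s) \leq f(0) + f'(0)s$, evaluated at $s = \log t \leq 0$ for $t \in (0,1]$, yields
$$\log\mu_1^n(tK) \;\leq\; \log\mu_1^n(K) + f'(0)\log t.$$
Since $\log t \leq 0$, the corollary reduces to proving the lower bound $f'(0) \geq r(K)/(2\sqrt{6})$.

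Next, I would compute $f'(0)$ as a boundary flux. The change of variables $x = e^s y$ in $\mu_1^n(e^s K) = \int_{e^s K}\rho\,dx$, with $\rho(x) = 2^{-n}e^{-\|x\|_1}$, together with the divergence theorem, gives the identity
$$\frac{d}{ds}\Big|_{s=0}\mu_1^n(e^s K) \;=\; \int_{\partial K}\langle x,\nu_K(x)\rangle \rho(x)\, d\mathcal{H}^{n-1}(x),$$
where $\nu_K$ is the outer unit normal, defined $\mathcal{H}^{n-1}$-a.e.\ on $\partial K$. For every such $x$, the inclusion $r(K)\nu_K(x) \in r(K)B_2^n \subseteq K$ combined with $K \subseteq \{y:\langle y,\nu_K(x)\rangle \leq \langle x,\nu_K(x)\rangle\}$ forces $\langle x,\nu_K(x)\rangle \geq r(K)$. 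Hence
$$\frac{d}{ds}\Big|_{s=0}\mu_1^n(e^s K) \;\geq\; r(K)\,\mu_1^{n,+}(\partial K), \qquad \mu_1^{n,+}(\partial K) := \int_{\partial K}\rho\,d\mathcal{H}^{n-1}.$$

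The final input is the Bobkov-Houdr\'e Cheeger inequality for the product exponential measure, which asserts that $\mu_1^{n,+}(\partial A) \geq \frac{1}{2\sqrt{6}}\min\bigl(\mu_1^n(A),1-\mu_1^n(A)\bigr)$ uniformly in $n$. Under the hypothesis $\mu_1^n(K)\leq 1/2$, this yields $\mu_1^{n,+}(\partial K) \geq \mu_1^n(K)/(2\sqrt{6})$. Dividing the display from the previous paragraph by $\mu_1^n(K)$ produces $f'(0)\geq r(K)/(2\sqrt{6})$, and plugging this back into the first display (using $\log t \leq 0$) and exponentiating gives the claim. The main obstacle is really nothing conceptual but rather citing the precise form of the Bobkov-Houdr\'e inequality with its sharp Cheeger constant $1/(2\sqrt{6})$; the remaining ingredients -- concavity from the B-inequality, the surface-derivative identity, and the support-function lower bound from $r(K)B_2^n \subseteq K$ -- are routine.
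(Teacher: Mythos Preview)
Your argument is correct and uses the same two main ingredients as the paper---the B-inequality for $\mu_1^n$ (Corollary~\ref{cor:Bineqexamples}) and the Bobkov--Houdr\'e isoperimetric result---but it assembles them differently. The paper follows \cite{LO05}: it first uses the \emph{integrated} (enlargement) form of Bobkov--Houdr\'e, namely $\mu_1^n(A_h)\geq \mu_1\big((x-\tfrac{h}{2\sqrt 6},\infty)\big)$ when $\mu_1^n(A)=\mu_1((x,\infty))$, together with the elementary geometric separation $(sK)\cap (K^c)_{(1-s)r}=\emptyset$, to obtain $\mu_1^n(sK)\leq e^{\frac{(s-1)r}{2\sqrt6}}\mu_1^n(K)$ for every $s\in(0,1)$; only then does it invoke the B-inequality to pass from $s$ to an arbitrary $t\in(0,1]$ and send $s\to 1^-$. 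You instead apply the B-inequality first (concavity of $s\mapsto\log\mu_1^n(e^sK)$), reduce everything to the single derivative bound $f'(0)\geq r(K)/(2\sqrt6)$, and establish that bound via the divergence-theorem identity $f'(0)\mu_1^n(K)=\int_{\partial K}\langle x,\nu_K\rangle\rho\,d\mathcal H^{n-1}$, the support-function inequality $\langle x,\nu_K(x)\rangle\geq r(K)$, and the \emph{differential} (Cheeger) form of Bobkov--Houdr\'e. The two routes are dual: your support-function bound is the infinitesimal version of the paper's separation lemma, and the paper's limit $\frac{s-1}{\log s}\to 1$ is precisely the computation of $f'(0)$. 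Your approach is arguably more transparent, at the cost of a small amount of extra care (existence of $f'(0)$ and justification of the divergence theorem when $K$ is unbounded or $\partial K$ is not smooth), which the paper sidesteps by working with finite increments throughout.
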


Our next result is an extension of the Gaussian correlation inequality, which was recently proven by Royen in \cite{Roy14} (see also \cite{LM15} for a very clear exposition of Royen's proof and the references therein for the history of the problem).

\begin{theorem} [Gaussian correlation inequality, \cite{Roy14}] \label{thm:Gausscorr}
For any centered Gaussian measure $\gamma$ on $\R^n$ and symmetric convex sets $K,L$ in $\R^n$ we have
\begin{equation} \label{eq:correlation}
\gamma(K\cap L) \geq \gamma(K) \gamma(L).
\end{equation}
\end{theorem}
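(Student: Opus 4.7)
The plan is to reproduce Royen's argument from \cite{Roy14}. First, by inner regularity of $\gamma$ together with polyhedral approximation, it suffices to prove \eqref{eq:correlation} when $K=\bigcap_{i=1}^{k}\{x:|\langle u_i,x\rangle|\le 1\}$ and $L=\bigcap_{j=1}^{m}\{x:|\langle v_j,x\rangle|\le 1\}$ for some vectors $u_i,v_j\in\R^n$. Writing $X$ for a random vector with law $\gamma$ and $\Lambda$ for the covariance matrix of the concatenated Gaussian vector $(\langle u_1,X\rangle,\ldots,\langle u_k,X\rangle,\langle v_1,X\rangle,\ldots,\langle v_m,X\rangle)$, partitioned into the obvious blocks as $\Lambda=\bigl(\begin{smallmatrix}A&C\\C^{T}&B\end{smallmatrix}\bigr)$, the event $\{X\in K\cap L\}$ becomes a joint upper bound on a vector of squared Gaussians with covariance $\Lambda$.

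The next step is to interpolate. For $s\in[0,1]$ set $\Lambda_s=\bigl(\begin{smallmatrix}A&sC\\sC^{T}&B\end{smallmatrix}\bigr)$, let $(\xi(s),\eta(s))$ be a centered Gaussian vector with covariance $\Lambda_s$, and define
\[
F(s)\;\eqdef\;\mb P\big(\xi_i(s)^2\le 1 \text{ and } \eta_j(s)^2\le 1\text{ for every }i,j\big).
\]
Then $F(0)=\gamma(K)\gamma(L)$ because the two blocks are independent at $s=0$, whereas $F(1)=\gamma(K\cap L)$ since $\Lambda_1=\Lambda$. The theorem thus reduces to showing that $F$ is nondecreasing on $[0,1]$. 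For this one exploits the fact that the Laplace transform of the density of the vector of squares $V(s)=(\xi_i(s)^2,\eta_j(s)^2)$ equals $\det(I+2D_t\Lambda_s)^{-1/2}$ for $D_t=\mathrm{diag}(t)$, $t\in\R_+^{k+m}$, an expression which is itself the Laplace transform of a nonnegative density (a so-called multivariate gamma distribution).

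Royen's central computation is a cofactor expansion of $\det(I+2D_t\Lambda_s)^{-1/2}$ in the cross-block entries, producing an identity of the form $\tfrac{d}{ds}\det(I+2D_t\Lambda_s)^{-1/2}=\sum_{i,j}C_{ij}^{2}\,\Psi_{ij}(t,s)$, where each $\Psi_{ij}(t,s)$ is again the Laplace transform of a nonnegative density arising from an augmented multivariate gamma law. Inverting the Laplace transform term by term and integrating against the indicator of $[0,1]^{k+m}$ then expresses $F'(s)$ as a nonnegative combination of joint CDFs, so that $F'(s)\ge 0$. Integrating from $0$ to $1$ and passing to the polyhedral limit completes the proof. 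The main obstacle is precisely this algebraic identity: although the reduction and the interpolation setup are routine, verifying that differentiating $\det(I+2D_t\Lambda_s)^{-1/2}$ in $s$ produces a genuinely \emph{nonnegative} combination of Laplace transforms, rather than merely a signed one, is the delicate point of \cite{Roy14} and demands careful bookkeeping of cofactors together with the probabilistic interpretation of multivariate gamma laws.
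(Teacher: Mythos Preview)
The paper does not contain its own proof of Theorem~\ref{thm:Gausscorr}: it is stated with attribution to Royen \cite{Roy14} (with a pointer to the exposition \cite{LM15}) and is used purely as a black box in the proof of Theorem~\ref{thm:correlation}. So there is no ``paper's proof'' to compare against; the authors simply cite the result.

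Your sketch is a faithful high-level outline of Royen's original argument: the polyhedral reduction, the reformulation in terms of squared Gaussians, the interpolation $\Lambda_s$ between independent and fully correlated blocks, and the identification of $F'(s)$ as a nonnegative combination via multivariate gamma Laplace transforms. You also correctly identify the delicate point, namely verifying that the $s$-derivative of $\det(I+2D_t\Lambda_s)^{-1/2}$ organizes into a \emph{nonnegative} combination. As written, though, this is a proof plan rather than a proof: the key algebraic/probabilistic identity is asserted but not carried out, and the step ``inverting the Laplace transform term by term and integrating against the indicator'' hides nontrivial work (one needs the joint density of the squares, not just its Laplace transform, and the differentiation under the integral must be justified). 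If you intend this as more than a summary, those computations need to be supplied; otherwise, citing \cite{Roy14} or \cite{LM15} as the paper does is the appropriate level of detail here.
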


This inequality admits a straightforward extension to products of laws of Gaussian mixtures.

\begin{theorem} \label{thm:correlation}
Let $X_1,\ldots,X_n$ be Gaussian mixtures and denote by $\mu_i$ the law of $X_i$. Then, for $\mu=\mu_1\times\cdots\times\mu_n$ and any symmetric convex sets $K,L$ in $\R^n$ we have
\begin{equation} \label{eq:correlationmixt}
\mu(K\cap L) \geq \mu(K) \mu(L).
\end{equation}
\end{theorem}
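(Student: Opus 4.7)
The plan is to disintegrate $X = (X_1, \ldots, X_n) \sim \mu$ into its conditionally Gaussian structure and reduce the inequality to Royen's theorem. Realize $X_i \simd Y_i Z_i$ with $Y_i > 0$, $Z_i \sim N(0,1)$, and all $2n$ of the variables $Y_1, Z_1, \ldots, Y_n, Z_n$ mutually independent; in particular $Y_1, \ldots, Y_n$ are independent. Conditional on $Y = (Y_1, \ldots, Y_n) = y$, the vector $X$ is a centered Gaussian with diagonal covariance $\Delta(y_1^2, \ldots, y_n^2)$, so Theorem \ref{thm:Gausscorr} applied in this conditional Gaussian space to the symmetric convex sets $K$ and $L$ yields
$$\mb{P}\bigl(X \in K \cap L \bigm| Y = y\bigr) \;\geq\; F(y)\, G(y),$$
where $F(y) \eqdef \mb{P}(X \in K \mid Y = y)$ and $G(y) \eqdef \mb{P}(X \in L \mid Y = y)$. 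Integrating against the law of $Y$ gives $\mu(K \cap L) \geq \mb{E}[F(Y) G(Y)]$.

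The remaining task is to upgrade this to $\mu(K \cap L) \geq \mu(K) \mu(L)$ by showing $\mb{E}[F(Y) G(Y)] \geq \mb{E}[F(Y)]\, \mb{E}[G(Y)]$. I would first verify that both $F$ and $G$ are coordinate-wise non-increasing on $(0,\infty)^n$: if $0 < y_i \leq y'_i$ for every $i$, then the covariance $\Delta(y_1^2, \ldots, y_n^2)$ is dominated by $\Delta({y'_1}^2, \ldots, {y'_n}^2)$ in the Loewner order, and a standard consequence of Anderson's theorem is that a centered Gaussian with larger covariance assigns no more mass to any symmetric convex set (one writes the ``larger'' Gaussian as the sum of the smaller one and an independent Gaussian perturbation, and applies Anderson's lemma conditionally). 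With this monotonicity in hand, the independence of $Y_1, \ldots, Y_n$ lets me invoke the Harris--FKG inequality for product measures, which delivers $\mb{E}[F(Y) G(Y)] \geq \mb{E}[F(Y)]\, \mb{E}[G(Y)] = \mu(K)\mu(L)$. Chaining with the conditional step completes the proof.

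There is no serious obstacle here: Royen's theorem supplies the deep input, while the monotonicity of $F$ and $G$ is a routine Anderson-type argument and Harris--FKG for product measures is classical. I note that, in contrast to Theorem \ref{thm:Bineqmixt}, no structural assumption on the laws of the individual $Y_i$ beyond positivity is required for this correlation inequality; this is because the conditioning trick bypasses any convexity or log-concavity considerations at the level of the mixing distribution.
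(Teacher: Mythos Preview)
Your proposal is correct and follows essentially the same architecture as the paper's proof: disintegrate via the mixture structure, apply Royen's theorem conditionally, establish coordinate-wise monotonicity of $y \mapsto \gamma_n(\Delta(y)^{-1}K)$, and then decouple via a positive-correlation inequality for product measures (the paper phrases this as iterated Chebyshev, which is exactly Harris--FKG in this setting). The only minor difference is that the paper obtains the monotonicity step from the Gaussian B-inequality (its Lemma~\ref{lemma:monotonicitycorrelation}), whereas your Anderson-type argument is more elementary and works just as well.
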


This theorem implies that the correlation inequality \eqref{eq:correlationmixt} holds for the product measure $\mu_p^n$ as well as for all symmetric $p$-stable laws on $\R^n$, where $p\in(0,2)$ (Corollary \ref{cor:corrstable}). In particular, the multivariate Cauchy distribution, which is a rotationally invariant $1$-stable distribution on $\R^n$ defined as $\diff\mu(x)=c_n(1+\|x\|_2^2)^{-\frac{n+1}{2}}\diff x$, satisfies the inequality \eqref{eq:correlationmixt}. In \cite{Mem15}, Memarian proved partial results in this direction and noticed that such inequalities are equivalent to correlation-type inequalities on the unit sphere $S^{n-1}$. We will recap his argument in Section \ref{sec:5}. Let $S^{n-1}_+\subseteq S^{n-1}$ be the open upper hemisphere, i.e. $S^{n-1}_+ = S^{n-1} \cap \{x\in\R^n: \ x_n>0\}$ whose pole is the point $p=(0,\ldots,0,1)$. A subset $A\subseteq S_+^{n-1}$ is called geodesically convex if for any two points $x,y\in A$ the shortest arc of the great circle joining $x,y$ is contained in $A$. Furthermore, $A$ is called symmetric (with respect to the pole $p$) if for any $x\in A$, the point $x^*\neq x$ which lies on the great circle joining $x$ and $p$ and satisfies $d_{S^{n-1}}(x,p) = d_{S^{n-1}}(p,x^*)$, also belongs in $A$. Here $d_{S^{n-1}}$ denotes the geodesic distance on the sphere.

\begin{corollary} \label{cor:corrsphere}
Let $S_+^{n-1}\subseteq S^{n-1}$ be the open upper hemisphere. Then for every symmetric geodesically convex sets $K,L$ in $S_+^{n-1}$ we have
\begin{equation} \label{eq:corrsphere}
|K\cap L| \cdot |S_+^{n-1}| \geq |K|\cdot |L|,
\end{equation}
where $|\cdot|$ denotes the surface area measure on $S^{n-1}$.
\end{corollary}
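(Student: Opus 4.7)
The plan is to transport the inequality to a Euclidean correlation inequality for the multivariate Cauchy measure on $\R^{n-1}$, which by the remark immediately following Theorem \ref{thm:correlation} (and equivalently by Corollary \ref{cor:corrstable}) satisfies \eqref{eq:correlationmixt}. The bridge between the two settings is the gnomonic (central) projection $\phi:\R^{n-1}\to S_+^{n-1}$ given by
\[
\phi(x) \;=\; \frac{(x,1)}{\sqrt{1+\|x\|_2^2}},
\]
and the entire argument reduces to verifying that $\phi$ respects symmetry, convexity, and measure in the correct way.

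I would begin by recording three properties of $\phi$. First, $\phi(0)=p=(0,\ldots,0,1)$, the points $\phi(x),p,\phi(-x)$ are coplanar with the origin of $\R^n$, and $d_{S^{n-1}}(\phi(x),p)=d_{S^{n-1}}(p,\phi(-x))$; hence the subsets of $S_+^{n-1}$ that are symmetric with respect to the pole are exactly the images under $\phi$ of the origin-symmetric subsets of $\R^{n-1}$. Second, every great circle of $S^{n-1}$ is the intersection of $S^{n-1}$ with a two-dimensional subspace of $\R^n$, and its preimage under $\phi$ is the intersection of that subspace with the affine hyperplane $\{x_n=1\}$, which is an affine line; thus geodesically convex subsets of $S_+^{n-1}$ correspond bijectively, via $\phi$, to convex subsets of $\R^{n-1}$. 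Third, a standard Jacobian computation yields the pullback formula
\[
\phi^{\ast}\,\dd\sigma \;=\; \frac{\dd x}{(1+\|x\|_2^2)^{n/2}},
\]
where $\dd\sigma$ is the surface area measure on $S^{n-1}$. Consequently the pushforward of the normalized surface area measure on $S_+^{n-1}$ under $\phi^{-1}$ is a probability measure on $\R^{n-1}$ with density proportional to $(1+\|x\|_2^2)^{-n/2}$, which in view of the density recorded right after Theorem \ref{thm:correlation} is precisely the multivariate Cauchy distribution on $\R^{n-1}$.

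With these properties in hand the conclusion is immediate. Given symmetric geodesically convex sets $K,L\subseteq S_+^{n-1}$, I set $\widetilde K = \phi^{-1}(K)$ and $\widetilde L = \phi^{-1}(L)$; these are origin-symmetric convex subsets of $\R^{n-1}$. Denoting by $\nu$ the Cauchy probability measure on $\R^{n-1}$, the applicable instance of \eqref{eq:correlationmixt} gives
\[
\nu(\widetilde K \cap \widetilde L) \;\geq\; \nu(\widetilde K)\,\nu(\widetilde L).
\]
Since $\nu(\phi^{-1}(A)) = |A|/|S_+^{n-1}|$ for every Borel $A\subseteq S_+^{n-1}$, multiplying through by $|S_+^{n-1}|^2$ yields exactly the desired inequality \eqref{eq:corrsphere}.

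The main obstacle, if there is one, is conceptual rather than technical: it is Memarian's observation that $S_+^{n-1}$ equipped with its uniform surface measure is isomorphic, as a convex-geometric probability space, to $\R^{n-1}$ equipped with the Cauchy distribution and its affine convex structure, the isomorphism being precisely the gnomonic chart. Once this dictionary is set up, each step above consists of a standard identification: the correspondence of symmetries, the elementary fact that great circles map to affine lines, and the classical Jacobian formula for the gnomonic projection.
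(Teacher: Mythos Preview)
Your proof is correct and follows essentially the same route as the paper's: both arguments use the gnomonic (central) projection to identify $S_+^{n-1}$ with $\R^{n-1}$, verify that this map sends geodesic symmetry and convexity to Euclidean symmetry and convexity, identify the pushforward of the normalized surface measure with the multivariate Cauchy distribution, and then invoke the correlation inequality for the latter. The only cosmetic difference is that the paper realizes the gnomonic chart by translating the sphere to be centered at $e_n$ and projecting onto the hyperplane $\{x_n=0\}$, whereas you work directly with the unit sphere and the explicit map $\phi(x)=(x,1)/\sqrt{1+\|x\|_2^2}$.
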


Finally, we want to stress that one cannot expect that all geometric properties of the Gaussian measure will extend mutatis mutandis to Gaussian mixtures. For example, it has been proven by Bobkov and Houdr\'e in \cite{BH96} that the Gaussian isoperimetric inequality actually characterizes Gaussian measures. Nevertheless, it might be the case that there are many more that admit such an extension.


\subsection{Sections and projections of $B_q^n$} The study of quantitative parameters of sections and projections of convex bodies is a classical topic in convex geometry (for example, see the monograph \cite{Kol05}). As a first application, we revisit two well known theorems and reprove them using some relevant Gaussian mixture representations.

Denote by $H_1$ the hyperplane $(1,0,\ldots,0)^\perp$ and by $H_n$ the hyperplane $(1,\ldots,1)^\perp$. It has been proven by Barthe and Naor in \cite{BN02} that for any $q\in(2,\infty]$ and any hyperplane $H\subseteq\R^n$ we have
\begin{equation} \label{eq:barthenaor}
|\proj_{H_1} B_q^n| \leq |\proj_H B_q^n| \leq |\proj_{H_n} B_q^n|,
\end{equation}
where $|\cdot|$ denotes Lebesgue measure. To deduce this, they proved that for any $q\in[1,\infty]$, if $X_1,\ldots,X_n$ are i.i.d. random variables with density 
\begin{equation}
f_q(t) = c_q |t|^{\frac{2-q}{q-1}} e^{-|t|^{\frac{q}{q-1}}}, \ \ \ t\in\R,
\end{equation}
then the volume of hyperplane projections of $B_q^n$ can be expressed as
\begin{equation} \label{eq:reprbn}
|\proj_{a^\perp} B_q^n| = \alpha_{q,n} \mb{E} \Big| \sum_{i=1}^n a_i X_i \Big|,
\end{equation}
where $a=(a_1,\ldots,a_n)$ is a unit vector and $\alpha_{q,n}$ is a positive constant. It immediately follows from the characterization given in Theorem \ref{thm:charact} that for $q\geq2$ the random variables $X_i$ are Gaussian mixtures and thus, from Theorem \ref{thm:khin} (with $p=1$), we deduce the following strengthening of \eqref{eq:barthenaor}.
\begin{corollary} \label{cor:projections}
Fix $q\in(2,\infty]$. For two unit vectors $a=(a_1,\ldots,a_n), b=(b_1,\ldots,b_n)$ in $\R^n$ we have
\begin{equation} \label{eq:projections}
(a_1^2,\ldots,a_n^2) \preceq (b_1^2,\ldots,b_n^2) \ \Longrightarrow \ |\proj_{a^\perp}B_q^n| \geq |\proj_{b^\perp} B_q^n|.
\end{equation}
\end{corollary}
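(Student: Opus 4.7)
The plan is to combine the Barthe--Naor representation \eqref{eq:reprbn} for volumes of hyperplane projections with the Schur monotonicity statement of Theorem \ref{thm:khin} at the exponent $p=1$. Since $1 \in (-1,2)$, the conclusion of Theorem \ref{thm:khin} reverses: majorization of the squared coefficients forces the first absolute moment of the weighted sum to \emph{decrease}, which is exactly the direction required by \eqref{eq:projections}. So the only substantive task is to verify that the density $f_q$ corresponds to a Gaussian mixture for $q \in (2,\infty]$.

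For $q \in (2,\infty)$, substituting $t = \sqrt{x}$ into the formula for $f_q$ gives
$$f_q(\sqrt{x}) \;=\; c_q\, x^{-\alpha}\, e^{-x^{\beta}}, \qquad \alpha \;=\; \frac{q-2}{2(q-1)} \;\geq\; 0, \qquad \beta \;=\; \frac{q}{2(q-1)} \;\in\; \bigl(\tfrac{1}{2},1\bigr).$$
The factor $x \mapsto x^{-\alpha}$ is completely monotonic by direct differentiation (its $n$-th derivative equals $(-1)^n \alpha(\alpha+1)\cdots(\alpha+n-1)\,x^{-\alpha-n}$). For the factor $x \mapsto e^{-x^\beta}$, I would appeal to the standard composition rule: since $x \mapsto x^\beta$ is a Bernstein function for $\beta \in (0,1)$ (one checks that its derivative $\beta x^{\beta-1}$ is completely monotonic) and $s \mapsto e^{-s}$ is completely monotonic, their composition is completely monotonic. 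The pointwise product of two completely monotonic functions is completely monotonic, so $x \mapsto f_q(\sqrt{x})$ is completely monotonic, and Theorem \ref{thm:charact} identifies $X_i$ as a Gaussian mixture. The case $q = \infty$ is handled either as a limit or by a direct analogous verification for the corresponding density.

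Once the $X_i$ are known to be Gaussian mixtures, Theorem \ref{thm:khin} with $p=1$ yields
$$\Big\|\sum_{i=1}^n a_i X_i\Big\|_1 \;\geq\; \Big\|\sum_{i=1}^n b_i X_i\Big\|_1$$
whenever $(a_1^2,\ldots,a_n^2) \preceq (b_1^2,\ldots,b_n^2)$. Inserting this into \eqref{eq:reprbn} and cancelling the positive constant $\alpha_{q,n}$ gives \eqref{eq:projections}.

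The main obstacle is the complete monotonicity verification in the second paragraph; everything else is a direct substitution into results already established in the paper. It is worth noting that the threshold $q=2$ in the hypothesis coincides exactly with the threshold past which $f_q(\sqrt{x})$ \emph{ceases} to be completely monotonic: for $q < 2$ one would obtain $\beta > 1$, and $e^{-x^\beta}$ then fails to be completely monotonic, which explains structurally why the Barthe--Naor projection inequality \eqref{eq:barthenaor} (and hence its strengthening here) is restricted to $q > 2$.
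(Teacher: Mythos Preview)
Your proposal is correct and follows exactly the same approach as the paper: invoke the Barthe--Naor representation \eqref{eq:reprbn}, verify via Theorem~\ref{thm:charact} that $f_q$ is the density of a Gaussian mixture when $q\ge 2$, and then apply Theorem~\ref{thm:khin} at $p=1$. Your explicit check that $x\mapsto f_q(\sqrt{x})=c_q\,x^{-\alpha}e^{-x^\beta}$ with $\alpha\ge 0$ and $\beta\in(0,1]$ is completely monotonic (as a product of a power and $e^{-h}$ with $h$ Bernstein) is more detailed than the paper, which simply asserts that this ``immediately follows'' from Theorem~\ref{thm:charact} and the examples in Section~\ref{sec:2}.
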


We now turn to the dual question for sections. Meyer and Pajor and later Koldobsky (see \cite{MP88}, \cite{Kol98}) proved that for any $q\in(0,2)$ and any hyperplane $H\subseteq\R^n$
\begin{equation} \label{eq:koldobsky}
|B_q^n \cap H_n| \leq |B_q^n \cap H| \leq |B_q^n \cap H_1|.
\end{equation}
More precisely, in \cite{MP88} the authors proved the upper bound of \eqref{eq:koldobsky} for $q\in[1,2)$ and the lower bound for $q=1$ and posed a conjecture that would imply \eqref{eq:koldobsky} for any $q\in(0,2)$; this was later confirmed in \cite{Kol98}. The main ingredients in Koldobsky's proof of \eqref{eq:koldobsky} were a general representation of the volume of hyperplane sections of a convex body in terms of the Fourier transform of the underlying norm and an elegant lemma about symmetric $q$-stable densities. Using a different approach, we prove the analogue of Corollary \ref{cor:projections} for sections.
\begin{corollary} \label{cor:sections}
Fix $q\in(0,2)$. For two unit vectors $a=(a_1,\ldots,a_n), b=(b_1,\ldots,b_n)$ in $\R^n$ we have
\begin{equation} \label{eq:sections}
(a_1^2,\ldots,a_n^2)\preceq (b_1^2,\ldots,b_n^2) \ \Longrightarrow \ |B_q^n \cap a^\perp| \leq |B_q^n\cap b^\perp|.
\end{equation}
\end{corollary}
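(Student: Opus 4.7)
The plan is to parallel the approach taken for projections in Corollary \ref{cor:projections}: we shall express $|B_q^n \cap a^\perp|$ as a positive constant (independent of $a$) times a quantity that, thanks to the Gaussian mixture structure available for $q\in(0,2]$, is Schur convex in $(a_1^2, \ldots, a_n^2)$.

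For the representation, let $G_1, \ldots, G_n$ be i.i.d.\ random variables with density $c_q e^{-|t|^q}$. The joint density of $G = (G_1, \ldots, G_n)$ is proportional to $e^{-\|x\|_q^q}$, a function of $\|x\|_q$ alone, so an $\ell_q$-polar decomposition shows that the direction $\Theta := G/\|G\|_q \in \partial B_q^n$ (distributed according to the normalized cone measure) is independent of the radius $\|G\|_q$. Combining this with the analogous polar decomposition of the uniform measure $U$ on $B_q^n$, namely $U \simd R\,\Theta$ with $R$ supported on $[0,1]$ independent of $\Theta$, and with the identity $|B_q^n \cap a^\perp| = |B_q^n|\, f_{\langle a, U\rangle}(0)$, one obtains after a short computation
\begin{equation*}
|B_q^n \cap a^\perp| = \frac{(2\Gamma(1+1/q))^n}{\Gamma(1+(n-1)/q)}\, f_{\sum_{i=1}^n a_i G_i}(0)
\end{equation*}
for every unit vector $a \in \R^n$.

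The Gaussian mixture machinery then enters. For $q \in (0,2)$, Theorem \ref{thm:charact} implies that each $G_i$ is a Gaussian mixture, since $x \mapsto e^{-x^{q/2}}$ is completely monotonic on $(0,\infty)$ (the exponent $x^{q/2}$ is a Bernstein function for $q/2 \in (0,1)$). Writing $G_i = Y_i Z_i$ with $Y_i > 0$ i.i.d., $Z_i$ i.i.d.\ standard Gaussian, and $Y \perp Z$, conditioning on $Y$ gives
\begin{equation*}
f_{\sum a_i G_i}(0) = \frac{1}{\sqrt{2\pi}}\, \mathbb{E}\!\left[\Big(\sum_{i=1}^n a_i^2 Y_i^2\Big)^{-1/2}\right].
\end{equation*}
It remains to prove that $F(s) := \mathbb{E}[(\sum_i s_i Y_i^2)^{-1/2}]$ is Schur convex on $\R_+^n$. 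Since $\phi(u) = u^{-1/2}$ is convex and the $Y_i$'s are i.i.d., the Schur--Ostrowski criterion combined with the symmetrization argument that underlies Theorem \ref{thm:khin} (pair each realization $(Y_i, Y_j)$ with its transposition and use monotonicity of $\phi'$) yields $(s_i - s_j)(\partial_i F - \partial_j F) \geq 0$ for all $i, j$. Hence $(a_i^2) \preceq (b_i^2)$ implies $f_{\sum a_i G_i}(0) \leq f_{\sum b_i G_i}(0)$, and the representation displayed above completes the proof.

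The main obstacle is the section representation; the key ingredient is the independence of direction and radius of $G$ under the $\ell_q$-polar decomposition, which is straightforward but should be recorded carefully. The remaining steps mirror the proof of Theorem \ref{thm:khin}, with the only deviation being that the effective exponent $p = -1$ lies on the boundary of its stated range---convexity of $u \mapsto u^{-1/2}$ keeps the Schur--Ostrowski argument identical.
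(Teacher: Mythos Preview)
Your argument is correct and coincides with the direct route the paper sketches in the Remark following the proof of Theorem~\ref{thm:laplacegaussian}: your identity $|B_q^n\cap a^\perp|=c(q,n)\,f_{\sum_i a_iG_i}(0)$ is exactly the Meyer--Pajor formula~\eqref{eq:meyerpajorformula} (your $\ell_q$-polar-decomposition derivation is one standard way to obtain it), and rewriting the density at zero as $(2\pi)^{-1/2}\,\mb{E}\big(\sum_i a_i^2Y_i^2\big)^{-1/2}$ via the Gaussian-mixture structure, followed by Marshall--Proschan for the convex map $u\mapsto u^{-1/2}$, is precisely what the paper does there. The paper's \emph{primary} proof of the corollary takes a somewhat different path: it first proves the more general Laplace-transform comparison of Theorem~\ref{thm:laplacegaussian}, then specializes via Bernstein's theorem to $g(s)=s^{-\alpha/q}$ and lets $\alpha\to n-1$ in combination with the polar volume formula~\eqref{eq:volumeradius}. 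That detour costs a little more work but simultaneously yields the mean-width comparison of Corollary~\ref{cor:width}, which your direct argument does not reach. One small point: you should record that $\mb{E}\big(\sum_i a_i^2Y_i^2\big)^{-1/2}<\infty$ (the paper checks this by Fatou's lemma); otherwise the Schur--Ostrowski differentiation is not justified.
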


In fact, Corollary \ref{cor:sections} will follow from a more general comparison of Gaussian parameters of sections which is in the spirit of \cite{BGMN05}. For a hyperplane $H\subseteq\R^n$ and a convex body $K\subseteq\R^n$ denote by $\|\cdot\|_{K\cap H}$ the norm on $H$ associated with the convex body $K\cap H$.

\begin{theorem} \label{thm:laplacegaussian}
Fix $q\in(0,2)$. For a unit vector $\theta\in\R^n$ let $G_\theta$ be a standard Gaussian random vector on the hyperplane $\theta^\perp$. Then for every $\lambda>0$ and unit vectors $a=(a_1,\ldots,a_n), b=(b_1,\ldots,b_n)$ in $\R^n$ we have
\begin{equation} \label{eq:laplacegaussiancomp}
(a_1^2,\ldots,a_n^2) \preceq (b_1^2,\ldots,b_n^2) \ \Longrightarrow \ \mb{E} e^{-\lambda\|G_a\|_{B_q^n\cap a^\perp}^q} \leq \mb{E} e^{-\lambda \|G_b\|_{B_q^n\cap b^\perp}^q}.
\end{equation}
\end{theorem}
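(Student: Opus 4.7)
The strategy is to recast $\mb{E} e^{-\lambda\|G_a\|_{B_q^n\cap a^\perp}^q}$ as an explicit expectation over a family of i.i.d.\ positive random variables, using the Gaussian-mixture representation of $e^{-\lambda|z|^q}$ valid for $q\in(0,2]$, and then deduce the required inequality from a Schur-monotonicity argument. First, since for $x\in a^\perp$ one has $\|x\|_{B_q^n\cap a^\perp}=\|x\|_q$, and $G_a$ can be realized as a standard Gaussian vector $Z\in\R^n$ conditioned on $\langle Z,a\rangle=0$, disintegrating the $n$-dimensional standard Gaussian density against the unit vector $a$ yields
\begin{equation*}
\mb{E}e^{-\lambda\|G_a\|_q^q}=(2\pi)^{-(n-1)/2}\int_{\R^n}e^{-\|x\|_2^2/2-\lambda\sum_{j=1}^n|x_j|^q}\,\delta(\langle x,a\rangle)\,\diff x.
\end{equation*}
Representing $\delta(\langle x,a\rangle)=\frac{1}{2\pi}\int_\R e^{is\langle x,a\rangle}\,\diff s$ then decouples the coordinates $x_j$.

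For $q\in(0,2]$, Theorem \ref{thm:charact} identifies the density proportional to $e^{-|z|^q}$ as a Gaussian mixture; equivalently, $u\mapsto u^{q/2}$ is a Bernstein function, so Bernstein's theorem supplies a probability measure $\nu_\lambda$ on $[0,\infty)$ with $e^{-\lambda|z|^q}=\int_0^\infty e^{-tz^2}\,\diff\nu_\lambda(t)$. Substituting into each coordinate factor, interchanging the order of integration by Tonelli, and performing the resulting one-dimensional Gaussian integrals (first in $x_j$, then in $s$) produces the clean identity
\begin{equation*}
\mb{E}e^{-\lambda\|G_a\|_q^q}=\mb{E}\Bigg[\frac{\prod_{j=1}^{n}R^{(j)}_\lambda}{\sqrt{\sum_{j=1}^{n}a_j^2\,(R^{(j)}_\lambda)^2}}\Bigg],
\end{equation*}
where $R^{(1)}_\lambda,\ldots,R^{(n)}_\lambda$ are i.i.d.\ copies of $R_\lambda:=(1+2T_\lambda)^{-1/2}$, $T_\lambda\sim\nu_\lambda$.

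It remains to check that the right-hand side is Schur convex in $(a_1^2,\ldots,a_n^2)$. Writing $c_j=a_j^2$, $U_j=R^{(j)}_\lambda$, and $F(c)=\mb{E}[\prod_k U_k\,(\sum_k c_k U_k^2)^{-1/2}]$, the i.i.d.\ nature of the $U_k$ renders $F$ symmetric, so by the Schur--Ostrowski criterion one only needs to verify $(c_i-c_j)(\partial_{c_i}F-\partial_{c_j}F)\geq 0$. Differentiating and symmetrizing under the distribution-preserving swap $U_i\leftrightarrow U_j$, which leaves $\prod_k U_k$ invariant but maps $A:=\sum_k c_k U_k^2$ to $A':=A-(c_i-c_j)(U_i^2-U_j^2)$, one obtains
\begin{equation*}
\partial_{c_i}F-\partial_{c_j}F=-\tfrac{1}{4}\,\mb{E}\Big[\textstyle\prod_k U_k\,\big(A^{-3/2}-(A')^{-3/2}\big)(U_i^2-U_j^2)\Big].
\end{equation*}
Because $x\mapsto x^{-3/2}$ is decreasing on $(0,\infty)$, $A^{-3/2}-(A')^{-3/2}$ has the opposite sign of $A-A'=(c_i-c_j)(U_i^2-U_j^2)$, so the bracketed product has the opposite sign of $c_i-c_j$; hence $\partial_{c_i}F-\partial_{c_j}F$ has the same sign as $c_i-c_j$, as required. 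The main conceptual hurdle is the Gaussian-mixture identity in the second step, which is precisely what confines the theorem to $q\in(0,2]$; the remaining Fubini-type bookkeeping and Schur--Ostrowski swap are entirely standard.
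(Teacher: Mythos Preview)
Your argument is correct and lands on the same key identity as the paper: up to a constant (or, in your version, an i.i.d.\ product weight), $\mb{E}e^{-\lambda\|G_a\|_{B_q^n\cap a^\perp}^q}$ equals $\mb{E}\big(\sum_j a_j^2 V_j^2\big)^{-1/2}$ for i.i.d.\ positive $V_j$, after which Schur convexity follows from the convexity of $t\mapsto t^{-1/2}$. The routes to this identity differ. The paper invokes \cite[Lemma~14]{BGMN05}, which expresses the Laplace transform as a slab limit $\lim_{\e\to0}\frac{1}{2\e}\mu_{q,\lambda}^n(H(\e))$ for a product measure with density $\propto e^{-\alpha|t|^q-\beta t^2}$, and then uses the Gaussian-mixture representation \eqref{eq:measuremixt} of that measure together with Marshall--Proschan's result \eqref{eq:marpro}. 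You instead carry out the disintegration from scratch via the Fourier representation of $\delta(\langle x,a\rangle)$ and the Bernstein decomposition of $e^{-\lambda|z|^q}$; the resulting formula carries an explicit symmetric weight $\prod_j R_j$, which is why you cannot cite \eqref{eq:marpro} verbatim and instead verify Schur--Ostrowski by the $U_i\leftrightarrow U_j$ swap (alternatively, one could absorb $\prod_j R_j$ into a tilted i.i.d.\ law and then apply \eqref{eq:marpro} directly). Your derivation is more self-contained in that it avoids the external lemma, at the cost of handling the distributional identity $\delta(u)=\frac{1}{2\pi}\int e^{isu}\,\diff s$ carefully; the paper's slab limit $H(\e)$ is exactly the rigorous regularization of that step, and its appeal to Fatou and dominated convergence is where the ``Fubini-type bookkeeping'' you allude to is actually carried out.
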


In \cite{BGMN05}, the authors used a different method to prove that for any $q\in(0,2)$ and $\lambda>0$ the Gaussian parameters appearing in \eqref{eq:laplacegaussiancomp} are maximized when $a=e_1$. As explained there, such inequalities imply the comparison of various other parameters of sections and projections of $B_q^n$, most notably the volume (Corollary \ref{cor:sections}) and the mean width. Recall that for a symmetric convex body $K$ in $\R^n$ the support function $h_K:S^{n-1}\to\R_+$ is defined as $h_K(\theta) = \max_{x\in K}\langle x,\theta\rangle$ and the mean width is
$$w(K) = \int_{S^{n-1}} h_K(\theta) \diff\sigma(\theta),$$
where $\sigma$ is the rotationally invariant probability measure on the unit sphere $S^{n-1}$. Exploiting the duality between sections and projections we deduce the following corollary.

\begin{corollary} \label{cor:width}
Fix $q\in(2,\infty]$ and let $H\subseteq\R^n$ be a hyperplane. Then
\begin{equation} \label{eq:width}
w(\proj_{H_1} B_q^n) \leq w(\proj_H B_q^n) \leq w(\proj_{H_n} B_q^n).
\end{equation}
\end{corollary}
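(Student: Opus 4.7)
The plan is to reduce the statement to a comparison of Gaussian moments and then invoke Theorem~\ref{thm:laplacegaussian} at the dual exponent $q' = q/(q-1) \in [1,2)$. Since $h_{B_q^n} = \|\cdot\|_{q'}$ and the support function of a projection coincides with $h_{B_q^n}$ on the projection subspace, for a unit vector $a\in\R^n$ I have
$$w(\proj_{a^\perp} B_q^n) = \int_{S^{n-1} \cap a^\perp} \|\theta\|_{q'} \diff\sigma_{a}(\theta),$$
and passing to polar coordinates on $a^\perp$ with a standard Gaussian vector $G_a$ on $a^\perp$ yields
$$w(\proj_{a^\perp}B_q^n) = \frac{\mb{E}\|G_a\|_{q'}}{\mb{E}|G_a|},$$
where the denominator depends only on $n$, not on $a$. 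In view of the majorization bounds in \eqref{eq:order}, specialized to $a = e_1$ (normal of $H_1$, majorization-maximal) and $a = (1,\ldots,1)/\sqrt{n}$ (normal of $H_n$, majorization-minimal), it is enough to establish
$$(a_1^2, \ldots, a_n^2) \preceq (b_1^2, \ldots, b_n^2) \implies \mb{E}\|G_a\|_{q'} \geq \mb{E}\|G_b\|_{q'}.$$

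To turn the Laplace-transform information of Theorem~\ref{thm:laplacegaussian} into first-moment information, for $q \in (2, \infty)$ (so $q' \in (1,2)$) I will use the Frullani-type identity
$$r = \frac{1/q'}{\Gamma(1-1/q')}\int_0^\infty \bigl(1 - e^{-\lambda r^{q'}}\bigr) \lambda^{-1-1/q'} \diff\lambda, \qquad r>0,$$
substitute $r = \|G_a\|_{q'}$, take expectation, swap integrals via Fubini, and invoke Theorem~\ref{thm:laplacegaussian} with its parameter $q$ set to $q'\in(0,2)$; noting $\|G_a\|_{B_{q'}^n\cap a^\perp}=\|G_a\|_{q'}$ since $G_a\in a^\perp$, the pointwise bound $\mb{E} e^{-\lambda \|G_a\|_{q'}^{q'}} \leq \mb{E} e^{-\lambda \|G_b\|_{q'}^{q'}}$ integrates against the positive kernel $\lambda^{-1-1/q'}\diff\lambda$ to give the required comparison of first moments.

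The one subtle point, which I expect to be the main obstacle, is the endpoint $q = \infty$, where $q' = 1$ and the Frullani kernel $\lambda^{-2}$ is not integrable at zero. I would handle it by replacing the identity above with the fractional-moment version
$$r^\alpha = \frac{\alpha}{\Gamma(1-\alpha)}\int_0^\infty \bigl(1 - e^{-\lambda r}\bigr) \lambda^{-1-\alpha} \diff\lambda, \qquad \alpha \in (0,1),$$
which combined with Theorem~\ref{thm:laplacegaussian} (applied at its parameter $q=1$) yields $\mb{E}\|G_a\|_1^\alpha \geq \mb{E}\|G_b\|_1^\alpha$ for every $\alpha \in (0,1)$; letting $\alpha \to 1^-$ and invoking dominated convergence against the integrable majorant $1 + \|G_a\|_1$ recovers the first-moment comparison and completes the proof.
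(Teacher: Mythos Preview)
Your proof is correct and follows essentially the same strategy as the paper: pass to the dual exponent $q'\in[1,2)$, rewrite the mean width of $\proj_{a^\perp}B_q^n$ as (a constant times) $\mb{E}\|G_a\|_{q'}$ via $h_{B_q^n}=\|\cdot\|_{q'}$ and polar integration, and then feed in Theorem~\ref{thm:laplacegaussian} at parameter $q'$ to compare these Gaussian moments under majorization. The only difference is in the device used to extract first moments from the Laplace-transform comparison. The paper observes that $s\mapsto e^{-\lambda s^{\beta}}$ is completely monotonic for $\beta\in(0,1]$, applies the resulting inequality \eqref{eq:completelymonotonecomparison} with $\beta=1/q'$, and differentiates both sides at $\lambda=0$ to obtain $\mb{E}\|G_b\|_{q'}\le\mb{E}\|G_a\|_{q'}$ directly; this handles the endpoint $q=\infty$ (i.e.\ $q'=1$, $\beta=1$) uniformly, with no separate limiting argument needed. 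Your Frullani representation achieves the same end and is perfectly valid, but forces the split into $q'>1$ and $q'=1$ and the extra $\alpha\to1^-$ step. Either way the substance is the same.
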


The lower bound in \eqref{eq:width} was first obtained in \cite{BGMN05}, where the authors also proved that for any $q\in(0,2)$ and any hyperplane $H\subseteq\R^n$
\begin{equation} \label{eq:widthpreviousbounds}
w(\proj_H B_q^n)\leq w(\proj_{H_1}B_q^n).
\end{equation}
Given this result and Corollary \ref{cor:width}, what remains to be understood is which hyperplane projections of $B_q^n$ have minimal mean width for $q\in(0,2)$, similarly to the study of volume.  We will provide the proof of Theorem \ref{thm:laplacegaussian} and its consequences in Section \ref{sec:6}.
\end{section}


\begin{section}{Proof of Theorem \ref{thm:charact} and examples} \label{sec:2}

Here we establish some initial facts about Gaussian mixtures, prove the characterization presented in the introduction and use it to provide relevant examples. 

Let $X$ be a Gaussian mixture with the same distribution as $YZ$, where $Y$ is positive and $Z$ is an independent standard Gaussian random variable; denote by $\nu$ the law of $Y$. Clearly $X$ is symmetric. Furthermore, for a Borel set $A\subseteq\R$ we have
\begin{equation}
\mb{P}(X\in A) = \mb{P}(YZ\in A) = \int_0^\infty \mb{P}(yZ\in A) \diff\nu(y) = \int_A \int_0^\infty \frac{1}{\sqrt{2\pi}y} e^{-\frac{x^2}{2y^2}}\diff\nu(y)\diff x,
\end{equation}
which immediately implies that $X$ has a density
\begin{equation} \label{eq:density}
f(x) = \frac{1}{\sqrt{2\pi}} \int_0^\infty e^{-\frac{x^2}{2y^2}} \frac{\diff\nu(y)}{y}.
\end{equation}
We now proceed with the proof of Theorem \ref{thm:charact}.

\medskip

\noindent {\it Proof of Theorem \ref{thm:charact}.} Let $X$ be a symmetric random variable with density $f$ such that the function $x\mapsto f(\sqrt{x})$ is completely monotonic. By Bernstein's theorem, there exists a non-negative Borel measure $\mu$ supported on $[0,\infty)$ such that
\begin{equation} \label{eq:laplacetransform}
f(\sqrt{x}) = \int_0^\infty e^{-tx} \diff\mu(t), \ \ \ \mbox{for every } x>0
\end{equation}
or, equivalently, $f(x) = \int_0^\infty e^{-tx^2} \diff\mu(t)$ for every $x\in\R$.
Notice that $\mu(\{0\})=0$, because otherwise $f$ would not be integrable. Now, for a subset $A\subseteq\R$ we have
\begin{equation} \label{eq:mixture}
\begin{split}
\mb{P}(X\in A) & = \int_A \int_0^\infty e^{-tx^2} \diff\mu(t) \diff x = \int_0^\infty \int_A e^{-tx^2} \diff x \diff\mu(t) \\ & = \int_0^\infty \int_{\sqrt{2t}A} \frac{1}{\sqrt{2\pi}} e^{-x^2/2} \diff x \ \sqrt{\frac{\pi}{t}}\diff\mu(t) = \int_0^\infty \gamma_n(\sqrt{2t}A) \diff\nu(t),
\end{split}
\end{equation}
where $\diff\nu(t) = \sqrt{\frac{\pi}{t}}\diff\mu(t)$. In particular, choosing $A=\R$, we deduce that $\nu$ is a probability measure, supported on $(0,\infty)$. Let $V$ be a random variable distributed according to $\nu$; clearly $V$ is positive almost surely. Define $Y= \frac{1}{\sqrt{2V}}$ and let $Z$ be a standard Gaussian random variable, independent of $Y$. Then \eqref{eq:mixture} implies that
\begin{equation*}
\begin{split}
\mb{P}(YZ \in A) & = \mb{P}\left( \frac{1}{\sqrt{2V}} \cdot Z \in A\right) = \int_0^\infty \gamma_n(\sqrt{2t}A) \diff\nu(t) = \mb{P}(X\in A),
\end{split}
\end{equation*}
that is, $X$ has the same distribution as the product $YZ$. The converse implication readily follows from \eqref{eq:density} and Bernstein's theorem after a change of variables. \hfill$\Box$

\medskip

Before applying Theorem \ref{thm:charact} we first provide some examples of completely monotonic functions. Direct differentiation shows that the functions $e^{-\alpha x}, x^{-\alpha}$ and $(1+x)^{-\alpha}$, where $\alpha>0$, are completely monotonic on $(0,\infty)$ and a straightforward induction proves that the same holds for $e^{-x^\beta}$, where $\beta\in(0,1]$. The same argument implies that if $g$ is a completely monotonic function on $(0,\infty)$ and $h$ is positive and has a completely monotonic derivative on $(0,\infty)$, then $g\circ h$ is also completely monotonic on $(0,\infty)$. Moreover, one can easily see that products of completely monotonic functions themselves are completely monotonic.

Combining the last example with Theorem \ref{thm:charact}, we get that for every $p\in(0,2]$ the random variable with density proportional to $e^{-|t|^p}$ is a Gaussian mixture. Recall that we denote by $\mu_p$ the probability measure with density $c_p e^{-|t|^p}$, $p>0$, where $c_p = (2\Gamma(1+1/p))^{-1}$, and $\mu_p^n = \mu_p^{\otimes n}$.  Student’s $t$ random variables provide another example of Gaussian mixtures. Furthermore, it is a classical fact that symmetric $p$-stable random variables, where $p\in(0,2]$, are Gaussian mixtures. For these measures we can describe the positive factor in their Gaussian mixture representation. Recall that a positive random variable $W$ with Laplace transform $\mb{E} e^{-tW} = e^{-c t^\alpha}$, where $\alpha\in(0,1)$ and $c>0$, is called a positive $\alpha$-stable random variable. Standard positive $\alpha$-stable random variables correspond to $c=1$; we denote their density by $g_\alpha$. 

\begin{lemma} \label{lemma:computations}
Fix $p\in(0,2)$ and let $Z$ be a standard Gaussian random variable.
\begin{enumerate} [(i)]
\item If $V_{p/2}$ has density proportional to $t^{-1/2} g_{p/2}(t)$ and is independent of $Z$, then $(2V_{p/2})^{-1/2}Z$ has density $c_pe^{-|t|^p}$.
\item If $W_{p/2}$ is a standard positive $p/2$-stable random variable and is independent of $Z$, then $(2W_{p/2})^{1/2} Z$ is a standard symmetric $p$-stable random variable.
\end{enumerate}
\end{lemma}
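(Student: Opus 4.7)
The plan is to compute characteristic functions and densities directly, using the two defining identities $\mb{E} e^{isZ} = e^{-s^2/2}$ for a standard Gaussian $Z$ and $\mb{E} e^{-sW_\alpha} = e^{-s^\alpha}$ for a standard positive $\alpha$-stable variable $W_\alpha$.

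Part (ii) is the cleanest: I would condition on $W_{p/2}$ and apply the Gaussian characteristic function to obtain
$$\mb{E}\, e^{it\sqrt{2W_{p/2}}\,Z} \;=\; \mb{E}\bigl[\mb{E}[e^{it\sqrt{2W_{p/2}}\,Z}\mid W_{p/2}]\bigr] \;=\; \mb{E}\, e^{-t^2 W_{p/2}} \;=\; e^{-(t^2)^{p/2}} \;=\; e^{-|t|^p},$$
which is exactly the characteristic function of a standard symmetric $p$-stable variable, so the laws coincide.

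For part (i), I would apply the mixture density formula \eqref{eq:density} established at the start of Section \ref{sec:2}: if $X = YZ$ with $Y$ a positive random variable independent of a standard Gaussian $Z$, then $f_X(x) = \tfrac{1}{\sqrt{2\pi}}\,\mb{E}[Y^{-1} e^{-x^2/(2Y^2)}]$. Setting $Y = (2V_{p/2})^{-1/2}$ this becomes
$$f_X(x) \;=\; \frac{1}{\sqrt{\pi}}\,\mb{E}\bigl[\sqrt{V_{p/2}}\, e^{-V_{p/2}\,x^2}\bigr].$$
Substituting the prescribed density of $V_{p/2}$, the factor $\sqrt{t}$ cancels the $t^{-1/2}$ weight, and the remaining integral is (up to a constant) $\int_0^\infty e^{-tx^2} g_{p/2}(t)\,\dd t$, which is the Laplace transform of $g_{p/2}$ at $x^2$ and therefore equals $e^{-|x|^p}$. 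Hence $f_X$ is a constant multiple of $e^{-|x|^p}$, and the normalization $\int_\R f_X = 1$ forces this constant to be $c_p$.

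The only real book-keeping item, which I expect to be the mildly subtle step, is verifying that $V_{p/2}$ is well-defined as a probability measure, i.e., that $\int_0^\infty t^{-1/2} g_{p/2}(t)\,\dd t < \infty$. This follows from the standard asymptotics of positive stable densities: $g_{p/2}$ decays rapidly near $0$, and $g_{p/2}(t) = O(t^{-1-p/2})$ as $t\to\infty$, so the weighted density is integrable at both endpoints. Once this is secured, both parts reduce to a one-line Laplace-transform computation with no further obstacle.
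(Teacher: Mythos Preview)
Your proposal is correct and essentially matches the paper's proof: part (ii) is verbatim the same characteristic-function computation, and part (i) is the same Laplace-transform identity $\int_0^\infty e^{-tx^2} g_{p/2}(t)\,\diff t = e^{-|x|^p}$, just organized as a direct verification via the mixture density formula \eqref{eq:density} rather than by tracing back through the construction in the proof of Theorem~\ref{thm:charact}. Your explicit integrability check for $t^{-1/2}g_{p/2}(t)$ is a nice addition that the paper leaves implicit (there it follows automatically since the measure $\sqrt{\pi/t}\,\diff\mu(t)$ arising in the proof of Theorem~\ref{thm:charact} is shown to be a probability measure).
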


\begin{proof} To show (i), we shall decompose a symmetric random variable with density $c_p e^{-|x|^p}$ into a product of two independent random variables: a positive one and a standard Gaussian. To this end, denote by $\mu$ the measure in the representation \eqref{eq:laplacetransform} written for the density $c_p e^{-|x|^p}$, that is
$$c_p e^{-x^{p/2}} = \int_0^\infty e^{-tx}\diff\mu(t), \ \ \ x>0.$$
Therefore, the Laplace transform of $c_p^{-1}\mu$ is $e^{-x^{p/2}}$, which implies that $c_p^{-1}\mu$ is a standard positive $p/2$-stable measure with density $g_{p/2}$. Now, an inspection of the proof of Theorem \ref{thm:charact}, reveals that the positive factor $Y$ in the Gaussian mixture representation is $Y=(2V)^{-1/2}$, where $V$ has law $\sqrt{\frac{\pi}{t}}\diff\mu(t)$, so in this case the density of $V$ is indeed proportional to $t^{-1/2} g_{p/2}(t)$, as required.

On the other hand, (ii) is a straightforward characteristic function computation. Using the independence of $W_{p/2}$ and $Z$ we get 
\begin{equation*}
\begin{split}
\mb{E} e^{i\sqrt{2}tW_{p/2}^{1/2} Z} = \mb{E}_{W_{p/2}} \mb{E}_Z e^{i\sqrt{2}tW_{p/2}^{1/2}Z} = \mb{E} e^{-t^2W_{p/2}} = e^{-t^p},
\end{split}
\end{equation*}
which concludes the proof of the lemma.
\end{proof}

Lemma \ref{lemma:computations} will be useful in Section \ref{sec:4}. It can be checked by doing further computations that when $p=1$ these Gaussian mixture representations have the following explicit forms.
\begin{enumerate} [(i)]
\item Let $\mathcal{E}$ be an exponential random variable (that is, a random variable with density $e^{-t}{\bf1}_{t>0}$) and $Z$ a standard Gaussian random variable, independent of $\mathcal{E}$. Then the product $\sqrt{2\mathcal{E}} Z$ has density $\frac{1}{2}e^{-|t|}$, $t\in\R$ (symmetric exponential density).
\item Let $Z_1,Z_2$ be independent standard Gaussian random variables. Then the quotient $Z_1/|Z_2|$ is distributed according to the Cauchy distribution with density $\frac{1}{\pi(1+x^2)}$, which is the symmetric 1-stable distribution.
\end{enumerate}

\begin{remark}
It was noted in \cite[p.~223]{BN02} that for an infinitely differentiable integrable function $f:(0,\infty)\to\R$, the function $x\mapsto f(\sqrt{x})$ is completely monotonic if and only if $x\mapsto\widehat{f}(\sqrt{x})$ is completely monotonic, where $\widehat{f}$ is the Fourier transform of $f$. Applying this to the density $c_pe^{-|t|^p}$ and then using Theorem \ref{thm:charact} yields that symmetric $p$-stable random variables are Gaussian mixtures, as was also proven above.
\end{remark}

\end{section}


\begin{section}{Moment and entropy comparison} \label{sec:3}
For the proofs of this section, we will use an elementary result of Marshall and Proschan from \cite{MP65} which reads as follows. Let $\phi:\R^n\to\R$ be a convex function, symmetric under permutations of its $n$ arguments. Let $X_1,\ldots,X_n$ be interchangeable random variables, that is, random variables whose joint distribution is invariant under permutations of its coordinates. Then for two vectors $(a_1,\ldots,a_n), (b_1,\ldots,b_n)\in\R^n$ we have
\begin{equation} \label{eq:marpro}
(a_1,\ldots,a_n) \preceq (b_1,\ldots,b_n) \ \Longrightarrow \ \mb{E}\phi(a_1X_1,\ldots,a_nX_n) \leq \mb{E} \phi(b_1X_1,\ldots,b_nX_n)
\end{equation}
or, in other words, the function $\R^n\ni(a_1,\ldots,a_n) \mapsto \mb{E}\phi(a_1X_1,\ldots,a_nX_n)$ is Schur convex. If $\phi$ is concave, then the second inequality in \eqref{eq:marpro} is reversed, i.e. the function above is Schur concave. This result follows directly from the fact that a convex (respectively concave) function that is symmetric under permutations of its arguments is Schur convex (respectively concave), which, in turn, is a consequence of the following simple property. If $a=(a_1,\ldots,a_n), b=(b_1,\ldots,b_n)\in\R^n$ then
$$a \preceq b \ \Longleftrightarrow \ a \in \mathrm{conv}\big\{ (b_{\sigma(1)},\ldots,b_{\sigma(n)}): \ \sigma \mbox{ is a permutation of } \{1,\ldots,n\}\big\}, $$
where $\mathrm{conv}(A)$ denotes the convex hull of a set $A\subseteq\R^n$ (for details, see \cite{MO79}).

We start with the comparison of moments of Gaussian mixtures.

\medskip

\noindent {\it Proof of Theorem \ref{thm:khin}.} Fix $p>-1$, $p\neq0$. Let $X$ be a Gaussian mixture and $X_1,\ldots, X_n$ be independent copies of $X$. Since each $X_i$ is a Gaussian mixture, there exist i.i.d. positive random variables $Y_1,\ldots,Y_n$ and independent standard Gaussian random variables $Z_1,\ldots,Z_n$ such that $X_i$ has the same distribution as the product $Y_i Z_i$. For $a_1,\ldots,a_n\in\R$ the joint independence of the $Y_i, Z_j$ implies that
$$\mb{E}\Big|\sum_{i=1}^n a_iX_i\Big|^p = \mb{E}\Big| \sum_{i=1}^n a_iY_iZ_i\Big|^p = \mb{E} \Big| \Big( \sum_{i=1}^n a_i^2 Y_i^2\Big)^{1/2} Z \Big|^p = \gamma_p^p \cdot \mb{E} \Big| \sum_{i=1}^n a_i^2 Y_i^2\Big|^{p/2},$$
where $Z$ is a standard Gaussian random variable independent of all the $Y_i$ and $\gamma_p =(\mb{E}|Z|^p)^{1/p}$. The conclusion now follows directly from Marshall and Proschan's result \eqref{eq:marpro} since $t\mapsto t^{p/2}$ is convex for $p\in(-1,0)\cup[2,\infty)$ and concave for $p\in(0,2)$. Notice that when the exponent $1/p$ is negative, the resulting norm becomes Schur concave. The result for $p=0$ is proven similarly. \hfill$\Box$

\medskip

The derivation of sharp constants in the corresponding Khintchine inequalities is now straightforward.

\begin{corollary} \label{cor:khinconstants}
Let $X$ be a Gaussian mixture and $X_1,\ldots,X_n$ be independent copies of $X$. Then, for every $p\in(-1,\infty)$ and $a_1,\ldots,a_n$ in $\R$ we have
\begin{equation} \label{eq:mixtkhin2}
A_p \Big\|\sum_{i=1}^n a_iX_i \Big\|_2 \leq \Big\|\sum_{i=1}^n a_i X_i \Big\|_p \leq B_p \Big\|\sum_{i=1}^n a_iX_i \Big\|_2,
\end{equation}
where
\begin{equation} \label{eq:mixtconstants}
A_p = \begin{cases} \frac{\|X\|_p}{\|X\|_2},& p\in(-1,2) \\ \gamma_p,& p\in [2,\infty) \end{cases} \ \ \ \mbox{and} \ \ \ B_p = \begin{cases} \gamma_p,& p\in(-1,2) \\ \frac{\|X\|_p}{\|X\|_2},& p\in [2,\infty) \end{cases},
\end{equation}
provided that all the moments exist. Here $\gamma_p = \sqrt{2} \left( \frac{\Gamma\left(\frac{p+1}{2}\right)}{\sqrt{\pi}}\right)^{1/p}$ is the $p$-th moment of a standard Gaussian random variable. These constants are sharp.
\end{corollary}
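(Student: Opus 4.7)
The plan is to combine the Gaussian mixture representation with Jensen's inequality to produce the constants involving $\gamma_p$, and to invoke Theorem~\ref{thm:khin} at the extreme majorization $(a_1^2,\ldots,a_n^2) \preceq (1,0,\ldots,0)$ from \eqref{eq:order} to produce the constants involving $\|X\|_p/\|X\|_2$. By homogeneity one may normalize $\sum_{i=1}^n a_i^2 = 1$, so that $\|\sum_i a_iX_i\|_2 = \sigma := \|X\|_2$.

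Writing $X_i \simd Y_iZ_i$ with $Y_i$ positive, $Z_i$ standard Gaussian, all jointly independent, and integrating first over $(Z_i)$ conditional on $(Y_i)$, the weighted sum $\sum_i a_iX_i$ is a centered Gaussian of (conditional) variance $S := \sum_i a_i^2 Y_i^2$. This will yield the key identity
$$\Big\|\sum_{i=1}^n a_iX_i\Big\|_p^p = \gamma_p^p\, \mb{E}\, S^{p/2}, \qquad p \in (-1,\infty)\setminus\{0\},$$
together with $\mb{E} S = \sigma^2$. Since $t \mapsto t^{p/2}$ is convex on $(0,\infty)$ for $p \in [2,\infty)\cup(-1,0)$ and concave for $p \in (0,2)$, Jensen's inequality combined with the sign of the final exponent $1/p$ (which flips the inequality in the negative-$p$ regime) produces in every case $\|\sum a_iX_i\|_p \ge \gamma_p\|\sum a_iX_i\|_2$ for $p \ge 2$ and $\|\sum a_iX_i\|_p \le \gamma_p\|\sum a_iX_i\|_2$ for $p \in (-1,2)$, yielding $A_p = \gamma_p$ and $B_p = \gamma_p$ in the respective regimes. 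The remaining constants follow directly from Theorem~\ref{thm:khin} applied to the majorization $(a_1^2,\ldots,a_n^2) \preceq (1,0,\ldots,0)$: this gives $\|\sum a_iX_i\|_p \le \|X\|_p$ for $p \ge 2$, and the reversed form of Theorem~\ref{thm:khin} gives $\|\sum a_iX_i\|_p \ge \|X\|_p$ for $p \in (-1,2)$, producing the values $\|X\|_p/\|X\|_2$ for $B_p$ and $A_p$ respectively.

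For sharpness, the constants $\|X\|_p/\|X\|_2$ are attained exactly at $a = e_1$, while $\gamma_p$ is obtained asymptotically along equal weights $a_n = (n^{-1/2},\ldots,n^{-1/2})$: the key identity reduces the problem to proving $\mb{E}(n^{-1}\sum_{i=1}^n Y_i^2)^{p/2} \to \sigma^p$ as $n \to \infty$. This moment-convergence step is the main technical obstacle, particularly for $p > 2$, where Schur monotonicity of this sequence (a consequence of Theorem~\ref{thm:khin}) combined with Fatou's lemma only shows that the limit is at least $\sigma^p$. The matching upper bound will be obtained from the $L^{p/2}$-law of large numbers applied to $(Y_i^2)$, which is applicable here because $\mb{E} Y^p = \mb{E}|X|^p/\gamma_p^p$ is finite by the standing moment hypothesis on $X$. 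For $p \in (-1,2]$ the Jensen estimate itself supplies the matching upper bound, so no additional work is required.
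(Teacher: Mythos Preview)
Your argument is correct and, for the $\gamma_p$-bound, takes a slightly more direct route than the paper. The paper obtains \emph{both} bounds from Theorem~\ref{thm:khin}: the majorization $(1/n,\ldots,1/n)\preceq(a_1^2,\ldots,a_n^2)$ yields $\big\|\sum_i n^{-1/2}X_i\big\|_p \le \big\|\sum_i a_iX_i\big\|_p$ for $p\ge2$, and the central limit theorem together with the monotonicity in $n$ then identifies the infimum of the left side as $\gamma_p\|X\|_2$. You bypass this limit by applying Jensen's inequality to $\mb{E}S^{p/2}$ with $\mb{E}S=\sigma^2$, which produces the $\gamma_p$-bound for every fixed $a$ immediately. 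Both routes then need essentially the same convergence for sharpness; your $L^{p/2}$-law of large numbers is exactly the moment convergence underlying the paper's CLT step, so in that respect your write-up is the more explicit of the two.

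One small gap: your sharpness claim for $p\in(-1,0)$ is incomplete. In that range $t\mapsto t^{p/2}$ is convex, so Jensen gives the \emph{lower} bound $\mb{E}\big(n^{-1}\sum_i Y_i^2\big)^{p/2}\ge\sigma^p$, which is the same direction as Fatou; neither supplies the ``matching upper bound'' you assert. A clean fix is the pointwise convexity estimate $\big(n^{-1}\sum_i Y_i^2\big)^{p/2}\le n^{-1}\sum_i Y_i^{p}$. The right-hand side is uniformly integrable (it converges in $L^1$ by the ordinary law of large numbers, since $\mb{E}Y_1^p=\mb{E}|X|^p/\gamma_p^p<\infty$ by the standing hypothesis), and hence so is the left-hand side; combined with almost-sure convergence this forces $\mb{E}\big(n^{-1}\sum_i Y_i^2\big)^{p/2}\to\sigma^p$.
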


\begin{proof} 
We can clearly assume that $(a_1,\ldots,a_n)$ is a unit vector. We will prove the statement for $p\geq2$; the case $p\in(-1,2)$ is identical. The Schur convexity statement of Theorem \ref{thm:khin} along with \eqref{eq:order} implies that
\begin{equation} \label{eq:monotonicity}
\Big\|\frac{X_1+\cdots+X_n}{\sqrt{n}} \Big\|_p \leq \Big\| \sum_{i=1}^n a_i X_i\Big\|_p \leq \|X_1\|_p.
\end{equation}
Applying this for $a_1=\cdots=a_{n-1}=(n-1)^{-1/2}$ and $a_n=0$, where $n\geq2$, shows that the quantity on the left-hand side is decreasing in $n$ and the central limit theorem implies that
$$\gamma_p \|X\|_2 \leq \Big\| \sum_{i=1}^n a_iX_i\Big\|_p \leq \|X\|_p,$$
which is equivalent to
$$\gamma_p \Big\| \sum_{i=1}^n a_iX_i\Big\|_2 \leq \Big\| \sum_{i=1}^n a_iX_i\Big\|_p \leq \frac{\|X\|_p}{\|X\|_2}  \Big\| \sum_{i=1}^n a_iX_i\Big\|_2.$$
The sharpness of the constants is evident.
\end{proof}

For the proof of Corollary \ref{cor:ballkhin} we need to exploit two results about the geometry of $B_q^n$ which are probabilistic in nature. Let $Y_1,\ldots, Y_n$ be i.i.d. random variables distributed according to $\mu_q$ and write $Y=(Y_1,\ldots,Y_n)$. 

We denote by $S$ the random variable $\big(\sum_{i=1}^n |Y_i|^q\big)^{1/q}$. As explained in the introduction, the main ingredient of the proof of Corollary \ref{cor:ballkhin} is a representation for the uniform measure on $B_q^n$ discovered in \cite{BGMN05} that reads as follows. Let $\mathcal{E}$ be an exponential random variable (that is, the density of $\mathcal{E}$ is $e^{-t}{\bf 1}_{t>0}$) independent of the $Y_i$. Then the random vector 
$$\Big(\frac{Y_1}{(S^q+\mathcal{E})^{1/q}},\ldots,\frac{Y_n}{(S^q+\mathcal{E})^{1/q}}\Big)$$ 
is uniformly distributed on $B_q^n$. Furthermore, we will need a result of Schechtman and Zinn from \cite{SZ90}, also independently proven by Rachev and R\"uschendorf in \cite{RR91}, which asserts that the random variables $S$ and $\frac{Y}{S}$ are independent.

\medskip

\noindent {\it Proof of Corollary \ref{cor:ballkhin}.} Recall that $X=(X_1,\ldots,X_n)$ is a random vector uniformly distributed on $B_q^n$ and let $Y_1,\ldots,Y_n, S$ and $\mathcal{E}$ be as above. For the reader's convenience we repeat the following computation from \cite{BGMN05}. Using the representation described before and the independence of $S$ and $\frac{Y}{S}$ we get
\begin{equation*}
\mb{E} \Big| \sum_{i=1}^n a_iX_i \Big|^p = \mb{E} \Big| \frac{1}{(S^q+\mathcal{E})^{1/q}} \sum_{i=1}^n a_iY_i \Big|^p = \mb{E} \Big| \frac{S}{(S^q+\mathcal{E})^{1/q}}\Big|^p \mb{E} \Big|\sum_{i=1}^n a_i \frac{Y_i}{S}\Big|^p.
\end{equation*}
Then, again by independence, $\mb{E}\big| \sum_{i=1}^n a_i \frac{Y_i}{S}\big|^p \mb{E}|S|^p = \mb{E} \big| \sum_{i=1}^n a_iY_i\big|^p$ and thus
\begin{equation} \label{eq:proportional}
\mb{E}\Big| \sum_{i=1}^n a_i X_i \Big|^p = \frac{1}{\mb{E}|S|^p} \mb{E}\Big| \frac{S}{(S^q+\mathcal{E})^{1/q}}\Big|^p \mb{E} \Big| \sum_{i=1}^n a_iY_i\Big|^p = c(p,q,n) \mb{E}\Big|\sum_{i=1}^n a_iY_i\Big|^p,
\end{equation}
where $c(p,q,n)>0$ is independent of the vector $(a_1,\ldots,a_n)$. In other words, the moments of linear functionals applied to the vector $X$ are proportional to the moments of the same linear functionals applied to $Y$. In view of Theorem \ref{thm:khin} and of the fact that $Y_1,\ldots,Y_n$ are i.i.d. Gaussian mixtures, this property readily implies Corollary \ref{cor:ballkhin}.
\hfill$\Box$

\medskip

Similarly to Corollary \ref{cor:khinconstants}, it is straightforward to deduce the sharp constants for Khintchine inequalities on $B_q^n$.

\begin{corollary} \label{cor:ballconstants} \footnote{This corollary in the published version of the paper contains a small error that has been corrected here.}
Fix $q\in (0,2]$ and let $X=(X_1,\ldots,X_n)$ be a random vector, uniformly distributed on $B_q^n$. Then, for every $p\in(-1,\infty)$ and $a_1,\ldots,a_n$ in $\R$ we have
\begin{equation} \label{eq:ballkhin2}
A_{p,q,n} \Big\| \sum_{i=1}^n a_iX_i\Big\|_2 \leq \Big\| \sum_{i=1}^n a_iX_i\Big\|_p \leq B_{p,q,n} \Big\|\sum_{i=1}^n a_iX_i\Big\|_2,
\end{equation}
where
\begin{equation} \label{eq:ballconstants}
A_{p,q,n} = \begin{cases} \frac{\|X_1\|_p}{\|X_1\|_2},& p\in(-1,2) \\ \\ \frac{\|X_1\|_p}{\|X_1\|_2} \frac{\kappa_2}{\kappa_p} \gamma_p,& p\in [2,\infty) \end{cases} \ \ \ \mbox{and} \ \ \ B_{p,q,n} = \begin{cases} \frac{\|X_1\|_p}{\|X_1\|_2} \frac{\kappa_2}{\kappa_p} \gamma_p,& p\in(-1,2) \\ \\ \frac{\|X_1\|_p}{\|X_1\|_2},& p\in [2,\infty) \end{cases}
\end{equation}
and for $r>-1$
\begin{equation} \label{eq:ballnorms}
\|X_1\|_r =  \left(\frac{B\Big( \frac{r+1}{q},\frac{n+q-1}{q}\Big)}{B\Big( \frac{1}{q},\frac{n+q-1}{q}\Big)}\right)^{1/r} \ \ \mbox{and} \ \ \kappa_r = \left( \frac{\Gamma\Big( \frac{r+1}{q}\Big)}{\Gamma\Big(\frac{1}{q}\Big)}\right)^{1/r}.
\end{equation}
This value of $A_{p,q,n}$ is sharp for $p \in (-1,2)$ and of $B_{p,q,n}$ for $p \in [2,\infty)$.
\end{corollary}

\begin{proof}
Let $Y_1,\ldots,Y_n$ be as in the proof of Corollary \ref{cor:ballkhin}. By \eqref{eq:proportional}, for every $a_1,\ldots,a_n\in\R$, we have
\begin{equation} \label{eq:proportional times two}
\Big\|\sum_{i=1}^n a_i X_i\Big\|_p = c(p,q,n)^{1/p} \Big\|\sum_{i=1}^n a_i Y_i\Big\|_p \ \ \mbox{and} \ \ \Big\|\sum_{i=1}^n a_i X_i\Big\|_2 = c(2,q,n)^{1/2} \Big\|\sum_{i=1}^n a_i Y_i\Big\|_2.
\end{equation} 
Notice that for every $r\in(-1,\infty)$,
\begin{equation}
\|Y_1\|_r^r = \frac{1}{\Gamma\big(1+\frac{1}{q}\big)} \int_0^\infty t^r e^{-t^q}\diff t = \frac{1}{\Gamma\big(\frac{1}{q}\big)} \int_0^\infty t^{\frac{r+1}{q}-1} e^{-t}\diff t = \frac{\Gamma\big( \frac{r+1}{q}\big)}{\Gamma\big(\frac{1}{q}\big)} = \kappa_r^r.
\end{equation}
Therefore, applying \eqref{eq:proportional} to $a_1=1$ and $a_2=\cdots=a_n=0$, we conclude that
\begin{equation} \label{eq:compute crq}
c(r,q,n)^{1/r} = \frac{\|X_1\|_r}{\|Y_1\|_r} = \frac{\|X_1\|_r}{\kappa_r}.
\end{equation}
Combining \eqref{eq:proportional times two}, \eqref{eq:compute crq} and Corollary \ref{cor:ballkhin}, we derive \eqref{eq:ballconstants}. To deduce \eqref{eq:ballnorms}, notice that $X_1$ has density $f(x) = c_{q,n} (1-|x|^q)^{\frac{n-1}{q}} {\bf 1}_{|x|\leq1}$ where
$$c_{q,n}^{-1} = 2 \int_0^1 (1-x^q)^{\frac{n-1}{q}}\diff x = \frac{2}{q} B\Big(\frac{1}{q},\frac{n+q-1}{q} \Big).$$
Thus, for every $r>0$,
$$\|X_1\|_r = \Big(2c_{q,n} \int_0^1 x^r (1-x^q)^{\frac{n-1}{q}} \diff x \Big)^{1/r} =\left(\frac{B\Big( \frac{r+1}{q},\frac{n+q-1}{q}\Big)}{B\Big( \frac{1}{q},\frac{n+q-1}{q}\Big)}\right)^{1/r},$$
which completes the proof.
\end{proof}

\noindent {\it Remark.}
Fix $q \in (0,2]$. It can be checked that for $p \in [2,\infty)$, the sequence $(\frac{\|X_1\|_p}{\|X_1\|_2})_{n\geq 1}$ is increasing, whereas for $p \in (-1,2)$, it is decreasing. Consequently, for $p \in [2,\infty)$, $\inf_{n \geq 1} A_{p,q,n} = A_{p,q,1} = \frac{3^{1/2}}{(p+1)^{1/p}}\frac{\kappa_2}{\kappa_p}\gamma_p$ and for $p \in (-1,2)$, $\sup_{n \geq 1} B_{p,q,n} = B_{p,q,1} = \frac{3^{1/2}}{(p+1)^{1/p}}\frac{\kappa_2}{\kappa_p}\gamma_p$.

\medskip

We now turn to comparison of entropy.

\medskip

\noindent {\it Proof of Theorem \ref{thm:entropycomp}.} Let $X$ be a Gaussian mixture and $X_1,\ldots,X_n$ independent copies of $X$. There exist i.i.d. positive random variables $Y_1,\ldots,Y_n$ and independent standard Gaussian random variables $Z_1,\ldots,Z_n$ such that $X_i$ has the same distribution as the product $Y_iZ_i$. For a vector $\theta=(\theta_1,\ldots,\theta_n)\in\R^n$ denote by $X_\theta$ the random variable $\sum_{i=1}^n \theta_i X_i$ and by $f_\theta$ the density of $X_\theta$. Since $X_\theta$ is itself a Gaussian mixture, Theorem \ref{thm:charact} implies that the function $x\mapsto f_\theta(\sqrt{x})$ is completely monotonic. Consequently, there exists a measure $\mu_\theta$ on $[0,\infty)$ so that
$$f_\theta(\sqrt{x}) = \int_0^\infty e^{-tx} \diff\mu_\theta(t), \ \ \ \mbox{for every } x>0.$$
It now immediately follows from H\"older's inequality that for $x,y>0$ and $\lambda\in(0,1)$ we have
\begin{equation*}
\begin{split}
f_\theta(\sqrt{\lambda x+(1-\lambda) y}) & = \int_0^\infty (e^{-tx})^\lambda (e^{-ty})^{1-\lambda} \diff\mu_\theta(t) \\ & \leq \Big( \int_0^\infty e^{-tx}\diff\mu_\theta(t)\Big)^\lambda \Big( \int_0^\infty e^{-ty}\diff\mu_\theta(t) \Big)^{1-\lambda} = f_\theta(\sqrt{x})^\lambda f_\theta(\sqrt{y})^{1-\lambda}
\end{split}
\end{equation*}
or, in other words, the function $\varphi_\theta(x) = -\log f_\theta(\sqrt{x})$ is concave.

Let $a=(a_1,\ldots,a_n), b=(b_1,\ldots,b_n)\in\R^n$ be such that $(a_1^2,\ldots,a_n^2)\preceq(b_1^2,\ldots,b_n^2)$. We first consider the case of Shannon entropy, i.e. $\alpha=1$. Jensen's inequality implies the following well known variational formula
\begin{equation} \label{eq:varentropy}
\ent(X_b)  = \mb{E}[ -\log f_b(X_b) ] = \min \Big\{\mb{E} [-\log g(X_b)] : \ g:\R\to\R_+ \mbox{ is a probability density}\Big\}.
\end{equation}
Thus, using \eqref{eq:varentropy} for $g=f_a$ we get
\begin{equation} \label{eq:usemixture}
\begin{split}
\ent(X_b) & \leq \mb{E}[-\log f_a (X_b)] = \mb{E} \Big[ -\log f_a\Big(\sum_{i=1}^n b_i Y_i Z_i \Big) \Big] \\ & = \mb{E} \Big[ -\log f_a \Big( \Big(\sum_{i=1}^n b_i^2 Y_i^2\Big)^{1/2} Z\Big) \Big] = \mb{E}_Z \mb{E}_{Y} \varphi_a \Big( \sum_{i=1}^n b_i^2 Y_i^2 Z^2\Big),
\end{split}
\end{equation}
where in the last equality we used the fact that $Z$ is independent of the $Y_i$. Now, since $(a_1^2,\ldots,a_n^2)$ is majorized by $(b_1^2,\ldots,b_n^2)$, the concavity of $\varphi_a$ along with Marshall and Proschan's result \eqref{eq:marpro} imply that
$$\mb{E}_{Y} \varphi_a \Big( \sum_{i=1}^n b_i^2 Y_i^2 Z^2 \Big) \leq \mb{E}_{Y} \varphi_a \Big( \sum_{i=1}^n a_i^2 Y_i^2 Z^2 \Big)$$
which, after averaging over $Z$, gives
$$\ent(X_b) \leq \mb{E}\varphi_a \Big( \sum_{i=1}^n a_i^2 Y_i^2 Z^2 \Big) = \mb{E} [-\log f_a(X_a)] = \ent(X_a).$$
For the R\'enyi entropy of order $\alpha$, where $\alpha>1$, we need to prove that
\begin{equation} \label{eq:renyigoal}
\int_\R f_a^\alpha(x)\diff x \leq \int_\R f_b^\alpha(x) \diff x.
\end{equation}
Notice that, as before, we can write
\begin{equation} \label{eq:renyiusemixture}
\int_\R f_a^\alpha(x)\diff x = \mb{E} f_a^{\alpha-1}(X_a) = \mb{E}_Z \mb{E}_Y f_a^{\alpha-1} \Big( \Big(\sum_{i=1}^n a_i^2Y_i^2\Big)^{1/2} Z \Big).
\end{equation}
The concavity of $\varphi_a$ implies that, since $\alpha>1$, the function $x\mapsto f_a^{\alpha-1}(\sqrt{x}) = e^{(1-\alpha)\varphi_a(x)}$ is convex and thus from \eqref{eq:marpro} we get
$$\mb{E}_Yf_a^{\alpha-1} \Big( \Big(\sum_{i=1}^n a_i^2Y_i^2\Big)^{1/2} Z \Big)  \leq \mb{E}_Y f_a^{\alpha-1} \Big( \Big(\sum_{i=1}^n b_i^2Y_i^2\Big)^{1/2} Z \Big)$$
which, after integrating with respect to $Z$, gives
\begin{equation} \label{eq:renyimarpro}
\int_\R f_a^\alpha(x) \diff x \leq \mb{E} f_a^{\alpha-1} \Big( \Big( \sum_{i=1}^n b_i^2 Y_i^2\Big)^{1/2} Z\Big) = \mb{E} f_a^{\alpha-1} (X_b) = \int_\R f_a^{\alpha-1}(x) f_b(x)\diff x.
\end{equation}
Finally, H\"older's inequality yields
\begin{equation} \label{eq:renyiholder}
\int_\R f_a^{\alpha-1}(x) f_b(x) \diff x \leq \Big( \int_\R f_a^\alpha(x) \diff x \Big)^{\frac{\alpha-1}{\alpha}} \Big(\int_\R f_b^\alpha(x)\diff x \Big)^{\frac{1}{\alpha}}.
\end{equation}
Combining \eqref{eq:renyimarpro} and \eqref{eq:renyiholder} readily implies \eqref{eq:renyigoal}, i.e. the comparison $h_\alpha(X_a)\geq h_\alpha(X_b)$.
\hfill$\Box$

\begin{remark}
We note that a result of similar nature was proven in the work \cite{Yu08} of Yu, who showed that for every i.i.d. symmetric log-concave random variables $X_1,\ldots,X_n$ the function $(a_1,\ldots,a_n) \mapsto \ent\big(\sum_{i=1}^n a_iX_i\big)$ is Schur convex on $\R^n$. In particular, for every vector $(a_1,\ldots,a_n)\in\R^n$ such that $\sum_{i=1}^n |a_i|=1$ we have
\begin{equation}
\ent\Big(\frac{1}{n}\sum_{i=1}^n X_i\Big) \leq \ent\Big(\sum_{i=1}^n a_iX_i\Big) \leq \ent(X_1).
\end{equation}
The main actors in Yu's argument are the same: the variational principle for entropy \eqref{eq:varentropy} and Marshall and Proschan's comparison result \eqref{eq:marpro} (the log-concavity assumption is paired up with the linear constraint on the coefficients).
\end{remark}

Finally, we proceed with the proof of Proposition \ref{prop:swap}.

\medskip

\noindent {\it Proof of Proposition \ref{prop:swap}.} Let $X_1, X_2$ be independent Gaussian mixtures such that $X_i$ has the same distribution as the product $Y_iZ_i$, for some independent positive random variables $Y_1, Y_2$ and independent standard Gaussian random variables $Z_1, Z_2$. Let $G$ be a centered Gaussian random variable independent of $X_1$ with the same variance as $X_2$. Notice that $X_1+X_2$ has the same distribution as $(Y_1^2+Y_2^2)^{1/2}Z$, whereas $X_1+G$ has the same distribution as $(Y_1^2+\mb{E}Y_2^2)^{1/2} Z$, where $Z$ is a standard Gaussian random variable independent of the $Y_i$. Denote by $f$ the density of $X_1+X_2$ and by $g$ the density of $X_1+G$. Using the variational formula for entropy \eqref{eq:varentropy} we get
\begin{equation*}
\begin{split}
\ent(X_1+X_2) & = \mb{E} [-\log f(X_1+X_2)] \\ & \leq \mb{E}[-\log g(X_1+X_2)] = \mb{E}_{(Y_1,Z)}\mb{E}_{Y_2}[-\log g((Y_1^2+Y_2^2)^{1/2}Z)].
\end{split}
\end{equation*}
Since $X_1+G$ is also a Gaussian mixture, as remarked in the proof of Theorem \ref{thm:entropycomp}, the function $x\mapsto -\log g(\sqrt{x})$ is concave and thus
$$\mb{E}_{Y_2} [-\log g((Y_1^2+Y_2^2)^{1/2}Z)] \leq -\log g((Y_1^2+\mb{E}Y_2^2)^{1/2}Z).$$
Combining the above we deduce that
$$\ent(X_1+X_2) \leq \mb{E} [-\log g((Y_1^2+\mb{E}Y_2^2)^{1/2}Z)] = \mb{E}[-\log g(X_1+G)] = \ent(X_1+G),$$
which concludes the proof.
\hfill$\Box$

\begin{remark}
In light of Proposition \ref{prop:swap}, it could seem that the assumption that $X_1,X_2$ are identically distributed in Question \ref{quest:swap} is redundant. However, this is not the case. Let $X_1,X_2$ be independent symmetric random variables such that $X_1$ has a smooth density $f:\R\to\R_+$ and let $G$ be an independent Gaussian random variable with the same variance as $X_2$. A straightforward differentiation shows that the inequality
$$\ent(X_1+\e X_2) \leq \ent(X_1+\e G)$$
as $\e\to0^+$ is equivalent to the comparison of the fourth order Taylor coefficients of these expressions, namely
$$\mb{E}X_2^4 \int_\R f^{(4)}(x)\log f(x)\diff x \geq \mb{E}G^4 \int_\R f^{(4)}(x) \log f(x)\diff x.$$
However, this inequality can easily be seen to be wrong, e.g. by taking $X_1$ to have density function $f(x)=\frac{x^2}{\sqrt{2\pi}}e^{-x^2/2}$ and $X_2$ to be uniformly distributed on a symmetric interval.
\end{remark}

\end{section}


\begin{section}{The B-inequality} \label{sec:4}

We start by establishing a straightforward representation for products of laws of Gaussian mixtures. Let $X_1,\ldots,X_n$ be independent Gaussian mixtures (not necessarily identically distributed) so that $X_i$ has the same distribution as the product $Y_iZ_i$, where $Y_1,\ldots,Y_n$ are independent positive random variables and $Z_1,\ldots,Z_n$ are independent standard Gaussian random variables. Denote by $\nu_i$ the law of $Y_i$, by $\mu_i$ the law of $X_i$ and by $\nu,\mu$ the product measures $\nu_1\times\cdots\times\nu_n$ and $\mu_1\times\cdots\times\mu_n$ respectively. Then, for a Borel set $A\subseteq\R^n$ we have
\begin{equation} \label{eq:measuremixt}
\begin{split}
\mu(A) & = \mb{P}((X_1,\ldots,X_n)\in A) = \mb{P}((Y_1Z_1,\ldots,Y_nZ_n)\in A) \\ & = \int_0^\infty \cdots \int_0^\infty \mb{P}((y_1Z_1,\ldots,y_nZ_n)\in A) \diff\nu_1(y_1)\cdots\diff\nu_n(y_n) \\ & = \int_{(0,\infty)^n} \gamma_n (\Delta(y_1,\ldots,y_n)^{-1} A ) \diff\nu(y_1,\ldots,y_n),
\end{split}
\end{equation}
where $\Delta(y_1,\ldots,y_n)$ is the diagonal matrix with entries $y_1,\ldots,y_n$. In other words, $\mu$ is an average of centered Gaussian measures on $\R^n$. We now proceed with the proof of the B-inequality for Gaussian mixtures.

\medskip

\noindent {\it Proof of Theorem \ref{thm:Bineqmixt}.} Let $X_1,\ldots,X_n$ be as in the statement of the theorem and denote by $h_i$ the density of $Y_i$. Clearly, the log-concavity of the random variable $\log Y_i$ is equivalent to the log-concavity of the function $s\mapsto h_i(e^{-s})$ on $\R$. Let $K\subseteq\R^n$ be a symmetric convex set and $(t_1,\ldots,t_n)\in\R^n$. Then, by \eqref{eq:measuremixt} and the change of variables $y_i=e^{-s_i}$ we have
\begin{equation} \label{eq:measurebineq}
\begin{split}
\mu(\Delta(e^{t_1},\ldots,e^{t_n})K) & = \int_{(0,\infty)^n} \gamma_n (\Delta(y_1^{-1}e^{t_1},\ldots,y_n^{-1}e^{t_n})K) h_1(y_1)\cdots h_n(y_n) \diff y \\ & = \int_{\R^n} \gamma_n(\Delta(e^{s_1+t_1},\ldots,e^{s_n+t_n})K) h_1(e^{-s_1})\cdots h_n(e^{-s_n}) e^{-\sum_{i=1}^n s_i} \diff s.
\end{split}
\end{equation}
The B-inequality for Gaussian measure (Theorem \ref{thm:Bineq}) immediately implies that the function
$$\R^n\times\R^n \ni (s,t) \longmapsto \gamma_n(\Delta(e^{s_1+t_1},\ldots,e^{s_n+t_n})K)$$
is log-concave on $\R^n\times\R^n$. Consequently, the integrand in \eqref{eq:measurebineq} is a log-concave function of $(s,t)\in\R^n\times\R^n$ as a product of log-concave functions. The result now follows from the Pr\'ekopa-Leindler inequality (see, e.g., \cite[Theorem~1.4.1]{AGM15}) which implies that marginals of log-concave functions are log-concave (see also \cite[Theorem~3.15]{GNT14}).
\hfill$\Box$

\smallskip

\begin{remark}
An inspection of the proof of Theorem \ref{thm:Bineqmixt} shows that the same argument also yields the B-inequality for rotationally invariant measures of the form $\diff \mu(x) = f(\|x\|_2) \diff x$, where $f$ is proportional to the density of a Gaussian mixture that satisfies the assumption of Theorem \ref{thm:Bineqmixt}.
\end{remark}

Checking whether a particular Gaussian mixture $X$ satisfies the assumption of Theorem \ref{thm:Bineqmixt} might be non-trivial, since one has to know the distribution of the positive factor $Y$ occurring in its representation. However, by Lemma \ref{lemma:computations}, we know this factor for random variables with densities proportional to $e^{-|t|^p}$ and for symmetric $p$-stable random variables, where $p\in(0,2)$. This allows us to determine the values of $p\in(0,2)$ for which the assumption is satisfied, for each of these random variables. 

To this end, denote, as before, by $g_\alpha$ the density of a standard positive $\alpha$-stable random variable, $\alpha\in(0,1)$. Recall that the positive factor in the representation of a standard symmetric $p$-stable random variable is $(2W_{p/2})^{1/2}$, where $W_{p/2}$ is a standard positive $p/2$-stable random variable. Thus, the assumption of Theorem \ref{thm:Bineqmixt} is equivalent to the log-concavity of the function $s\mapsto g_{p/2}(e^{-s})$ on $\R$. On the other hand, the corresponding factor in the representation of the random variable with density $c_p e^{-|t|^p}$ is of the form $(2V_{p/2})^{-1/2}$ where $V_{p/2}$ has density proportional to $t^{-1/2} g_{p/2}(t)$. Therefore, the corresponding assumption in this case is again equivalent to the log-concavity of $s\mapsto g_{p/2}(e^{-s})$ on $\R$, since the remaining factor $e^{s/2}$ is log-affine. If $X$ is a random variable with density $g:\R\to\R_+$, the log-concavity of $s\mapsto g(e^{-s})$ is referred in the literature as multiplicative strong unimodality of $X$. The multiplicative strong unimodality of positive $\alpha$-stable distributions has been studied by Simon in \cite{Sim11}, who proved that such a random variable has this property if and only if $\alpha\leq1/2$. Combining this with the above observations and Theorem \ref{thm:Bineqmixt} we deduce the following.

\begin{corollary} \label{cor:Bineqexamples}
For every $p\in(0,1]$ the product measure on $\R^n$ with density proportional to $e^{-\|x\|_p^p}$ and the symmetric $p$-stable product measure on $\R^n$ satisfy the $B$-inequality for every symmetric convex set $K\subseteq\R^n$.
\end{corollary}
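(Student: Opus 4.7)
The statement is essentially assembled from pieces already laid out in the preceding paragraph, so the plan is mostly to verify that the hypothesis of Theorem \ref{thm:Bineqmixt} is met for the two families under consideration. First I would reduce the B-inequality for the given product measures to a multiplicative strong unimodality property of the positive factor in their Gaussian mixture representations. Recall that $\log Y$ is log-concave if and only if $s\mapsto h(e^{-s})$ is log-concave on $\R$, where $h$ is the density of $Y$. Thus by Theorem \ref{thm:Bineqmixt} it suffices, in each case, to exhibit the positive factor in the representation and verify that it enjoys this property.

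Next I would invoke Lemma \ref{lemma:computations} to pin down the relevant positive factors in terms of the standard positive $p/2$-stable density $g_{p/2}$. For the symmetric $p$-stable law, part (ii) of the lemma identifies the positive factor as $(2W_{p/2})^{1/2}$, whose density, after the smooth change of variables $w\mapsto (2w)^{1/2}$, is log-concave on the multiplicative group exactly when $s\mapsto g_{p/2}(e^{-s})$ is; the nonlinear change of variables contributes only log-affine Jacobian factors which do not affect log-concavity in $s$. Similarly, for the density proportional to $e^{-|t|^p}$, part (i) yields the factor $(2V_{p/2})^{-1/2}$ where $V_{p/2}$ has density proportional to $t^{-1/2}g_{p/2}(t)$; the extra $t^{-1/2}$ contributes a log-affine term $e^{s/2}$ after the substitution, so again the question reduces to log-concavity of $s\mapsto g_{p/2}(e^{-s})$.

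Having reduced both cases to the same statement, I would then invoke Simon's theorem from \cite{Sim11}, which asserts that a standard positive $\alpha$-stable random variable is multiplicatively strongly unimodal if and only if $\alpha\le 1/2$. Applying this with $\alpha=p/2$ yields the equivalent condition $p\le 1$, which matches the range of $p$ in the statement. At this point Theorem \ref{thm:Bineqmixt} applies to each coordinate and delivers the log-concavity of $(t_1,\ldots,t_n)\mapsto \mu(\Delta(e^{t_1},\ldots,e^{t_n})K)$ for every symmetric convex $K\subseteq \R^n$, which is the B-inequality.

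The only nontrivial ingredient is Simon's characterization of multiplicative strong unimodality for positive stable laws, which we cite as a black box; beyond this citation, the entire argument is a bookkeeping exercise unwinding the substitutions in Lemma \ref{lemma:computations} and matching them with the hypothesis of Theorem \ref{thm:Bineqmixt}. No genuinely new obstacle arises.
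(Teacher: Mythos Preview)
Your proposal is correct and follows essentially the same approach as the paper: reduce via Lemma~\ref{lemma:computations} both families to the multiplicative strong unimodality of the positive $p/2$-stable law (noting that the extra $t^{-1/2}$ factor and the power/inverse changes of variable contribute only log-affine terms), then invoke Simon's result from \cite{Sim11} to conclude for $p\in(0,1]$ and apply Theorem~\ref{thm:Bineqmixt}.
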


We now turn to the proof of the small ball estimate for the symmetric exponential measure (Corollary \ref{cor:smallball}) described in the introduction. The argument is very similar to the one in \cite{LO05}.

\medskip

\noindent {\it Proof of Corollary \ref{cor:smallball}.} Let $K\subseteq\R^n$ be a symmetric convex set such that $\mu_1^n(K)\leq1/2$ and we denote by $r=r(K)$ the inradius of $K$. For a set $A\subseteq\R^n$ and $h>0$ we also denote by $A_h$ the \mbox{$h$-enlargement} of $A$, that is, $A_h = A + hB_2^n$. Notice that for $s\in(0,1)$ we have $(sK) \cap (K^c)_{(1-s)r} = \emptyset$, where $K^c$ is the complement of $K$, and thus
\begin{equation} \label{eq:complementsmallball}
\mu_1^n(sK) \leq 1- \mu_1^n ( (K^c)_{(1-s)r}).
\end{equation}
Now, choose $u\geq0$ such that $\mu_1^n(K) = \mu_1((u,\infty))$ or, equivalently, $\mu_1^n(K^c) = \mu_1((-u,\infty))$. Bobkov and Houdr\'e proved in \cite{BH97} that if $A\subseteq\R^n$ is a Borel set and $x\in\R$ is such that $\mu_1^n(A) = \mu_1((x,\infty))$, then for every $h>0$ we have
\begin{equation} \label{eq:bobkovhoudre}
\mu_1^n(A_h) \geq \mu_1 \Big(\Big(x-\frac{h}{2\sqrt{6}},\infty\Big)\Big).
\end{equation}
Combining \eqref{eq:complementsmallball} and \eqref{eq:bobkovhoudre} we get
\begin{equation} \label{eq:sKsmallball}
\mu_1^n(sK) \leq 1- \mu_1\Big(\Big(-u-\frac{(1-s)r}{2\sqrt{6}},\infty\Big)\Big) = \mu_1\Big(\Big(u+\frac{(1-s)r}{2\sqrt{6}},\infty\Big)\Big) =  e^{\frac{s-1}{2\sqrt{6}} r(K)} \mu_1^n(K).
\end{equation}
For $0<t\leq s\leq1$ we can write $s = t^{\frac{\log s}{\log t}}$ and the B-inequality for $\mu_1^n$ implies that
$$\mu_1^n(tK)^{\frac{\log s}{\log t}} \mu_1^n(K)^{1-\frac{\log s}{\log t}} \leq \mu_1^n(sK),$$
or equivalently
\begin{equation} \label{eq:Bineqsmallball}
\frac{\mu_1^n(tK)}{\mu_1^n(K)} \leq \left( \frac{\mu_1^n(sK)}{\mu_1^n(K)} \right)^{\frac{\log t}{\log s}},
\end{equation}
which, in view of \eqref{eq:sKsmallball}, gives the estimate
\begin{equation*}
\mu_1^n(tK) \leq e^{\frac{s-1}{2\sqrt{6}} \cdot \frac{\log t}{\log s} r(K)} \mu_1^n(K) = t^{\frac{r(K)}{2\sqrt{6}} \cdot \frac{s-1}{\log s}} \mu_1^n(K).
\end{equation*}
Taking the limit $s\to1^-$ we finally deduce that
$$\mu_1^n(tK) \leq t^{\frac{r(K)}{2\sqrt{6}}} \mu_1^n(K),$$
for every $t\in[0,1]$, which concludes the proof.
\hfill$\Box$

\begin{remark} \label{rem:pv}
In \cite{PV16a}, Paouris and Valettas proved a different small ball probability estimate for the symmetric exponential measure and any unconditional convex body $K$ in terms of the global parameter $\beta(K) = \mathrm{Var}\|W\|_K/m(K)^2$, where $W$ is distributed according to $\mu_1^n$ and $m(K)$ is the median of $\|\cdot\|_K$ with respect to $\mu_1^n$. Their result is in the spirit of the work \cite{KV07} of Klartag and Vershynin. In the follow-up paper \cite{PV16b}, they showed that a similar estimate holds for every unconditional log-concave measure and unconditional convex body $K$ with a worse dependence on $\beta(K)$. In the particular case of the symmetric exponential measure the unconditionality assumption in the suboptimal estimate from \cite{PV16b} can be omitted, because of Corollary \ref{cor:Bineqexamples}.
\end{remark}

We would like to remark that Theorem \ref{thm:Bineqmixt} combined with a result of Marsiglietti, \cite[Proposition~3.1]{Mar16}, immediately implies the following corollary.

\begin{corollary} \label{cor:marsiglietti}
Let $\mu$ be as in Theorem \ref{thm:Bineqmixt}. Then, for every symmetric convex set $K\subseteq\R^n$ the function $t\mapsto \mu(tK)$ is $\frac{1}{n}$-concave for $t>0$, that is
\begin{equation} \label{eq:marsiglietti}
\mu\big( (\lambda t + (1-\lambda) s) K\big)^{1/n} \geq \lambda \mu(tK)^{1/n} + (1-\lambda) \mu(sK)^{1/n},
\end{equation}
for every $t,s>0$ and $\lambda\in(0,1)$.
\end{corollary}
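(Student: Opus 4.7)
\emph{Proof proposal.} The plan is to invoke Marsiglietti's Proposition~3.1 from \cite{Mar16} as a black box; the only ingredient to verify is its hypothesis, which is precisely the functional B-inequality of \eqref{eq:Bthmmixt}. This verification is already the content of Theorem~\ref{thm:Bineqmixt}: under the standing assumptions of the corollary, for every symmetric convex set $K\subseteq\R^n$, the map
\[
(t_1,\ldots,t_n)\longmapsto \mu\big(\Delta(e^{t_1},\ldots,e^{t_n})K\big)
\]
is log-concave on $\R^n$. Marsiglietti's proposition then applies to $\mu$ and directly yields that $t\mapsto \mu(tK)$ is $\frac{1}{n}$-concave on $(0,\infty)$, which is the claimed inequality \eqref{eq:marsiglietti}.

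The main conceptual content therefore lies entirely in Marsiglietti's abstract result, not in our application. Restricting the functional B-inequality to the diagonal $t_1=\cdots=t_n$ only delivers the one-variable statement $\mu(a^\lambda b^{1-\lambda}K)\geq \mu(aK)^\lambda \mu(bK)^{1-\lambda}$, equivalently log-concavity of $t\mapsto \mu(e^tK)$. Upgrading this to $\frac{1}{n}$-concavity is a genuinely stronger Brunn--Minkowski-type conclusion (at the level of the exponent), so Marsiglietti's argument must exploit the multidimensional log-concavity at non-diagonal directions, rather than merely its restriction to the diagonal. If one wished to reprove this step from scratch, that dimensional upgrade would be the sole obstacle; on our side, Theorem~\ref{thm:Bineqmixt} has already done all the work specific to Gaussian mixtures.
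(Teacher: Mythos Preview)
Your proposal is correct and matches the paper's own argument exactly: the paper simply states that Theorem~\ref{thm:Bineqmixt} combined with \cite[Proposition~3.1]{Mar16} immediately yields the corollary, with no further details. Your additional commentary on why the full multidimensional B-inequality (rather than its diagonal restriction) is needed is accurate and goes slightly beyond what the paper records.
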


\end{section}


\begin{section}{Correlation inequalities} \label{sec:5}

To prove the correlation inequality for Gaussian mixtures (Theorem \ref{thm:correlation}) we will use Royen's Gaussian correlation inequality (Theorem \ref{thm:Gausscorr}), along with a simple lemma for the standard Gaussian measure. Recall that we write $\Delta(y)=\Delta(y_1,\ldots,y_n)$ for the diagonal $n\times n$ matrix with diagonal $y=(y_1,\ldots,y_n)$.

\begin{lemma} \label{lemma:monotonicitycorrelation}
Let $K$ be a symmetric convex set in $\R^n$. Then the function $t\mapsto \gamma_n\big(\Delta(t,1,\ldots,1)K\big)$ is nondecreasing for $t>0$.
\end{lemma}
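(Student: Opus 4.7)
The plan is to reduce the claim to a statement about a one-dimensional Gaussian expectation via slicing along the first coordinate axis. Define $K^u=\{x'\in\R^{n-1} : (u,x')\in K\}$ and $g(u)=\gamma_{n-1}(K^u)$. An application of Fubini's theorem gives
\begin{equation*}
\gamma_n\bigl(\Delta(t,1,\ldots,1)K\bigr) = \int_\R \varphi_1(x_1)\, g(x_1/t)\, \dd x_1 = \E\, g(Z/t),
\end{equation*}
where $\varphi_1$ is the standard Gaussian density and $Z$ a standard Gaussian random variable, so proving the lemma reduces to showing that $t\mapsto \E\,g(Z/t)$ is non-decreasing on $(0,\infty)$.

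The heart of the argument is to verify that $g$ is even and non-increasing on $[0,\infty)$. Evenness is immediate from $K=-K$: this gives $K^{-u}=-K^u$, and the symmetry of $\gamma_{n-1}$ yields $g(-u)=g(u)$. For the monotonicity, I would observe that $(u,x')\mapsto \mathbf{1}_K(u,x')\,\varphi_{n-1}(x')$ is log-concave on $\R^n$ as a product of the log-concave indicator of a convex set and the log-concave Gaussian density, so by Pr\'ekopa's theorem its marginal $g$ is log-concave on $\R$. Any even log-concave function on $\R$ attains its maximum at the origin and is consequently non-increasing on $[0,\infty)$.

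The conclusion is then immediate: by evenness, $g(Z/t)=g(|Z|/t)$; since $t\mapsto |Z|/t$ is pointwise non-increasing on $(0,\infty)$, the monotonicity of $g$ on $[0,\infty)$ implies that $t\mapsto g(|Z|/t)$ is pointwise non-decreasing, and taking expectations preserves this ordering. The only substantial step is the log-concavity of $g$, for which Pr\'ekopa's inequality does all the work; the slicing representation and the reduction to pointwise monotonicity of $g(|Z|/t)$ are routine.
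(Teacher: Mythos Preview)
Your proof is correct and takes a genuinely different, more elementary route than the paper's. The paper invokes the full B-inequality of Cordero-Erausquin, Fradelizi and Maurey (Theorem~\ref{thm:Bineq}): it notes that $x\mapsto\log\gamma_n(\Delta(e^x,1,\ldots,1)K)$ is concave by that theorem, and then checks that this function is bounded below as $x\to\infty$ (by comparing with a small cube contained in $K$), which forces a concave function to be nondecreasing. Your argument instead slices along the first coordinate and reduces everything to the one-dimensional section function $g(u)=\gamma_{n-1}(K^u)$; the only nontrivial input you need is that $g$ is log-concave, which follows from Pr\'ekopa's theorem---a considerably lighter tool than the B-inequality. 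The advantage of your approach is that it is self-contained and does not appeal to the deep result of \cite{CFM04}; the paper's approach is terser because the B-inequality is already stated and used elsewhere in the surrounding text, so invoking it costs nothing.
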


\begin{proof}
It clearly suffices to consider the case when $K$ has nonempty interior. We will prove that the function $\psi(x)=\log\gamma_n\big(\Delta(e^{x},1,\ldots,1)K\big)$ is nondecreasing on the real line. By virtue of the B-inequality for the standard Gaussian measure (Theorem \ref{thm:Bineq}), $\psi$ is concave. To verify that $\psi$ is nondecreasing, it is enough to prove that $\lim_{x\to\infty} \psi(x) >-\infty$. Take $\delta>0$ such that $[-\delta,\delta]^n\subseteq K$. For every real number $x$ we have
$$\psi(x) \geq \log\gamma_n([-e^x\delta,e^x\delta]\times[-\delta,\delta]^{n-1}),$$
which, for $x\to\infty$, gives
$$\lim_{x\to\infty} \psi(x) \geq \log\gamma_n(\mathbb{R}\times[-\delta,\delta]^{n-1}) > -\infty.$$
This concludes the proof of the lemma.
\end{proof}

\noindent {\it Proof of Theorem \ref{thm:correlation}.} Let $\mu$ be a product of laws of Gaussian mixtures. According to \eqref{eq:measuremixt} for every Borel set $A\subseteq\R^n$ we have
$$\mu(A) = \int_{(0,\infty)^n}\gamma_n (\Delta(y)^{-1} A) \diff\nu_1(y_1)\cdots\diff\nu_n(y_n),$$
for some probability measures $\nu_1,\ldots,\nu_n$ on $(0,\infty)$. Let $K,L\subseteq\R^n$ be symmetric convex sets. The Gaussian correlation inequality yields
\begin{equation} \label{eq:useroyen}
\begin{split} 
\mu(K\cap L) & = \int_{(0,\infty)^n} \gamma_n (\Delta(y)^{-1} K\cap \Delta(y)^{-1}L) \diff\nu_1(y_1)\cdots\diff\nu_n(y_n) \\ & \geq \int_{(0,\infty)^n} \gamma_n (\Delta(y)^{-1}K) \gamma_n(\Delta(y)^{-1}L) \diff\nu_1(y_1)\cdots\diff\nu_n(y_n).
\end{split}
\end{equation}
Fix $y_1,\ldots,y_{n-1}>0$. Lemma \ref{lemma:monotonicitycorrelation} implies that the functions $y_n \mapsto \gamma_n(\Delta(y)^{-1}K)$ and $y_n\mapsto \gamma_n(\Delta(y)^{-1}L)$ are nonincreasing on $(0,\infty)$. Consequently, combining \eqref{eq:useroyen} and Chebyshev's integral inequality (see, e.g., \cite[p.~168]{HLP88}) for the probability measure $\nu_n$, we get
\begin{equation*}
\begin{split}
\mu   (K & \cap L) \geq \\ & \int_{(0,\infty)^{n-1}} \Big( \int_0^\infty\gamma_n(\Delta(y)^{-1}K) \diff\nu_n(y_n)\Big) \Big(\int_0^\infty \gamma_n(\Delta(y)^{-1}L) \diff\nu_n(y_n) \Big) \diff\nu_1(y_1)\cdots\diff\nu_{n-1}(y_{n-1}).
\end{split}
\end{equation*}
After iteratively applying Chebyshev's inequality to $\nu_1,\ldots,\nu_{n-1}$ we finally deduce that
\begin{equation*}
\begin{split}
\mu(K\cap L) & \geq \int_{(0,\infty)^n} \gamma_n (\Delta(y)^{-1}K)  \diff\nu_1(y_1)\cdots \diff\nu_n(y_n) \cdot \int_{(0,\infty)^n} \gamma_n (\Delta(y)^{-1}L) \diff\nu_1(y_1)\cdots \diff\nu_n(y_n) \\ & = \mu(K) \mu(L),
\end{split}
\end{equation*}
which is the correlation inequality \eqref{eq:correlationmixt}.
\hfill$\Box$

\begin{remark} \label{remark:correlation}
Similarly to the B-inequality, an inspection of the proof of Theorem \ref{thm:correlation} reveals that the same argument also gives the correlation inequality for rotationally invariant probability measures of the form $\diff\mu(x) = f(\|x\|_2)\diff x$, where $f$ is proportional to the density of a Gaussian mixture.
\end{remark}

Recall that a function $f:\R^n\to\R_+$ is called quasiconcave if for any $t\geq0$ the set $A_t=\{x\in\R^n: \ f(x)\geq t\}$ is convex. Writing
$$f(x) = \int_0^\infty {\bf 1}_{A_t} (x) \diff t, \ \ \ x\in\R,$$
one can immediately see that if a measure $\mu$ satisfies the correlation inequality \eqref{eq:correlationmixt} for any symmetric convex sets $K,L\subseteq\R^n$ then for every symmetric quasiconcave functions $f,g:\R^n\to\R_+$ we have
\begin{equation} \label{eq:quasiconcave}
\int_{\R^n} f(x) g(x) \diff\mu(x) \geq \int_{\R^n} f(x)\diff\mu(x) \cdot \int_{\R^n} g(x)\diff\mu(x).
\end{equation}
Correlation inequalities of the form \eqref{eq:quasiconcave} were treated by Koldobsky and Montgomery-Smith in \cite{KMS96} for another class of functions when $\mu$ is a general symmetric stable measure on $\R^n$. Recall that the law $\mu$ of a random vector $X$ in $\R^n$ is called a symmetric $p$-stable measure if every marginal $\langle X,a\rangle$, $a\in\R^n$, is a symmetric $p$-stable random variable. It is a well known fact (see, e.g., \cite[p.~312]{Wer84}) that symmetric $p$-stable random vectors $X=(X_1,\ldots,X_n)$ in $\R^n$ are in one-to-one correspondence with finite measures $m_X$ on the unit sphere $S^{n-1}$ such that
\begin{equation} \label{eq:stablemeasurerepresentation}
\mb{E} \exp\Big(i\sum_{j=1}^n a_j X_j\Big) = \exp\Big(-\int_{S^{n-1}} \Big| \sum_{j=1}^n a_j x_j \Big|^p \diff m_X(x)\Big),
\end{equation}
 for every $a_1,\ldots,a_n\in\R$. We will argue that the correlation inequality \eqref{eq:correlationmixt} holds for the law $\mu$ of any symmetric $p$-stable random vector $X$ in $\R^n$. Assume first that the corresponding measure $m_X$ on $S^{n-1}$ has a finite support, namely $\mathrm{supp}(m_X) = \{y_1,\ldots,y_\ell\}$, and let $Y$ be a standard $\ell$-dimensional symmetric $p$-stable random vector with independent coordinates. In this case, one can find $\theta_1,\ldots,\theta_n\in\R^\ell$ such that $X_j$ has the same distribution as $\langle Y,\theta_j\rangle$ or, in other words, $X$ is a linear image of $Y$ and the correlation inequality \eqref{eq:correlationmixt} immediately follows. For a general measure $m_X$ on $S^{n-1}$ there exists a sequence of finitely supported measures $m_\ell$ that converges to $m_X$ in the weak* topology (e.g. by the Krein-Milman theorem) which means, by \eqref{eq:stablemeasurerepresentation}, that the corresponding $p$-stable random vectors $X_\ell$ converge to $X$ in distribution. Note that to prove the correlation inequality \eqref{eq:correlationmixt} for a symmetric $p$-stable measure $\mu$ on $\R^n$, it suffices to consider the case when $K,L\subseteq\R^n$ are convex polytopes, which are sets whose boundaries are contained in a finite union of affine hyperplanes. However, any affine hyperplane is of $\mu$-measure zero, since the one-dimensional marginals of $\mu$ are $p$-stable, thus continuous. Therefore, the convergence in distribution concludes the proof of the following corollary.

\begin{corollary} \label{cor:corrstable}
Let $\mu$ be a symmetric $p$-stable measure on $\R^n$. Then for every symmetric convex sets $K,L\subseteq\R^n$ we have
$$\mu(K\cap L) \geq \mu(K) \mu(L).$$
\end{corollary}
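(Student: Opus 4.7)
The plan is to deduce the corollary from Theorem \ref{thm:correlation} (the correlation inequality for products of laws of Gaussian mixtures), exploiting the spectral representation \eqref{eq:stablemeasurerepresentation} of a symmetric $p$-stable vector $X$ via its spectral measure $m_X$ on $S^{n-1}$. First I would observe that Theorem \ref{thm:correlation}, together with the fact that symmetric $p$-stable random variables are Gaussian mixtures (Lemma \ref{lemma:computations}), already implies the desired inequality whenever the coordinates of $X$ are independent: indeed, a standard $\ell$-dimensional symmetric $p$-stable vector $Y$ with independent coordinates has law equal to a product of laws of Gaussian mixtures. Moreover, the class of measures for which the correlation inequality \eqref{eq:correlationmixt} holds is preserved under linear images: if $T:\R^\ell\to\R^n$ is linear and $K,L\subset\R^n$ are symmetric convex, then $T^{-1}(K)$ and $T^{-1}(L)$ are symmetric convex in $\R^\ell$, so the inequality for the law of $Y$ transfers to the law of $TY$.

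Next I would handle the case when $m_X$ is finitely supported, say on $\{y_1,\ldots,y_\ell\}\subset S^{n-1}$ with weights $c_1,\ldots,c_\ell\geq 0$. From \eqref{eq:stablemeasurerepresentation} one can then read off coefficients $\theta_1,\ldots,\theta_n\in\R^\ell$, namely $\theta_j=(c_i^{1/p}(y_i)_j)_{i=1}^\ell$, so that $X_j$ has the same distribution as $\langle Y,\theta_j\rangle$, where $Y$ is a standard $\ell$-dimensional symmetric $p$-stable vector with independent coordinates. Hence $X$ is a linear image of $Y$, and the previous paragraph yields \eqref{eq:correlationmixt} for the law of $X$.

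Finally I would remove the finite support assumption by a weak$^\ast$ approximation argument. Any finite Borel measure $m_X$ on $S^{n-1}$ is the weak$^\ast$ limit of finitely supported measures $m_\ell$ (for instance via standard partition-of-unity arguments, or by invoking the Krein--Milman theorem on the cone of positive measures). By \eqref{eq:stablemeasurerepresentation}, the corresponding symmetric $p$-stable vectors $X_\ell$ converge to $X$ in distribution. It suffices to verify \eqref{eq:correlationmixt} for symmetric convex polytopes $K,L$, since these form a $\mu$-approximating class for symmetric convex bodies. The boundary $\partial K$ of a polytope lies in a finite union of affine hyperplanes, each of which has $\mu$-measure zero because the one-dimensional marginals of any symmetric $p$-stable measure are continuous; the same holds for $L$ and $K\cap L$. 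Therefore $\mu_{X_\ell}$ converges to $\mu_X$ on each of $K$, $L$, and $K\cap L$, and passing to the limit in the inequality $\mu_{X_\ell}(K\cap L)\geq \mu_{X_\ell}(K)\mu_{X_\ell}(L)$ completes the proof.

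The only subtle step is the last one: ensuring that the sets to which one applies the limiting procedure are $\mu$-continuity sets. Using polytopes and the continuity of one-dimensional marginals to argue that affine hyperplanes are null is standard, so I do not anticipate any real obstacle; the whole argument essentially packages Theorem \ref{thm:correlation} through the linear/spectral structure of stable laws.
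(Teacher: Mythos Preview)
Your proposal is correct and follows essentially the same route as the paper: reduce to finitely supported spectral measure, realize $X$ as a linear image of a product of independent symmetric $p$-stable variables (to which Theorem~\ref{thm:correlation} applies), and then pass to the limit using weak$^\ast$ approximation of $m_X$, convergence in distribution, and the fact that polytope boundaries are $\mu$-null. Your write-up even supplies a bit more detail than the paper (the explicit formula for the $\theta_j$ and the observation that the correlation inequality is preserved under linear images).
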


This corollary implies inequalities of the form \eqref{eq:quasiconcave}, analogous to the ones proven in \cite{KMS96}. It also implies that the multivariate Cauchy distribution, defined as $\diff\mu(x) = c_n (1+\|x\|_2^2)^{-\frac{n+1}{2}}\diff x$, satisfies the correlation inequality \eqref{eq:correlationmixt}. Notice that this also follows from Remark \ref{remark:correlation}. In \cite{Mem15}, the author showed that this is actually equivalent to Corollary \ref{cor:corrsphere}. We reproduce his argument below.

\medskip

\noindent {\it Proof of Corollary \ref{cor:corrsphere}.}
Consider the hyperplane $\R^{n-1} \equiv \R^{n-1}\times\{0\}\subseteq\R^n$ and let $S\subseteq\R^n$ be the sphere of radius 1 centered at $e_n=(0,\ldots,0,1)$. Denote by $S_+$ the open lower hemisphere of $S$, i.e. $S_+=\{x\in S: \ x_n<1\}$, and define a bijection $q:S_+\to\R^{n-1}$ by the formula
\begin{equation} \label{eq:gnomonic}
q(x) = \mbox{the point of } \R^{n-1} \ \mbox{which lies on the line joining } x \mbox{ to } e_n.
\end{equation}
One can easily check that closed arcs of great circles on $S_+$ are mapped to line segments on $\R^{n-1}$ and vice versa, which immediately implies that geodesically convex sets in $S_+$ are in one-to-one correspondence with convex sets in $\R^{n-1}$. Moreover, since $q(0)=0$, symmetry in $\R^{n-1}$ agrees with geodesic symmetry in $S_+$. Denoting by $\mu$ the push-forward under $q$ of the normalized surface area measure on $S_+$, we get that for every $r>0$, $\mu$ satisfies the identity
$$\mu(rB_2^{n-1}) = \frac{|B_{S^{n-1}}(\arctan r)|}{|S_+|},$$
where $B_{S^{n-1}}(\theta)$ is a spherical cap of radius $\theta$ on $S^{n-1}$. A simple computation for the volume of spherical caps along with the rotational invariance of $\mu$ shows that $\mu$ is precisely the law of the multivariate Cauchy distribution on the hyperplane $\R^{n-1}$. Therefore, for two symmetric geodesically convex sets $K,L\subseteq S_+$, the multivariate Cauchy correlation inequality for the symmetric convex sets $q(K),q(L)\subseteq\R^{n-1}$ implies that
$$\frac{|K\cap L|}{|S_+|} = \mu(q(K\cap L)) = \mu(q(K)\cap q(L)) \geq \mu(q(K))\cdot\mu(q(L)) = \frac{|K|}{|S_+|} \cdot \frac{|L|}{|S_+|},$$
which completes the proof of the corollary.
\hfill$\Box$

\begin{remark} \label{rem:correlationforp>2}
It is a straightforward consequence of Theorem \ref{thm:correlation} that the product probability measure $\mu_p^n$ with density $c_p^ne^{-\|x\|_p^p}$ satisfies the correlation inequality \eqref{eq:correlationmixt} for every $p\in(0,2]$ and $n\geq1$. It turns out that this is the exact range of $p>0$ for which this property holds. To see this, take $\delta>0$ and consider the symmetric strips
$$K_\delta = \{(x,y)\in\R^2: \ |x-y| \leq \delta\} \ \ \mbox{and} \ \ L_\delta=\{(x,y)\in\R^2: \ |x+y|\leq \delta\}$$
on the plane. We will show that $\mu_p^2(K_\delta\cap L_\delta) < \mu_p^2(K_\delta)\mu_p^2(L_\delta)$ for $p>2$ and small enough $\delta>0$. Indeed, a straightforward differentiation yields that the Taylor expansions of these two quantities around $\delta=0$ are
$$\mu_p^2(K_\delta\cap L_\delta) = 4c_p^2 \delta^2 + o(\delta^3) \ \ \mbox{and} \ \ \mu_p^2(K_\delta) \mu_p^2(L_\delta) = 4c_p^2 2^{1-\frac{2}{p}} \delta^2  + o(\delta^3)$$
and since $1<2^{1-\frac{2}{p}}$ for $p>2$, the correlation inequality \eqref{eq:correlationmixt} cannot hold for small enough $\delta>0$. A computation along the same lines together with Remark \ref{remark:correlation} prove that a similar behavior is exhibited by the rotationally invariant probability measures with densities proportional to $e^{-\|x\|_2^p}$: they satisfy \eqref{eq:correlationmixt} if and only if $p\in(0,2]$.
\end{remark}

\begin{remark}
After the submission of this paper, we learned from J. Zinn about the works \cite{LP99} and \cite{LP03} of Lewis and Pritchard on measures supporting correlation inequalities \eqref{eq:correlationmixt}. In \cite{LP99}, the authors proved, among other things, that the uniform probability measure on any convex body $K$ in $\R^n$ does not support a correlation inequality, even though we now know that this holds for the uniform measure on a hemisphere, where usual convexity and symmetry are replaced by geodesic convexity and symmetry (Corollary \ref{cor:corrsphere}). In \cite{LP03}, they showed that any rotationally invariant measure $\mu$ on $\R^n$ which supports a correlation inequality must satisfy
$$\int_{\R^n} e^{a\|x\|_2^2}\ \dd \mu(x) = \infty$$
for some constant $a>0$. The computation mentioned in the last sentence of Remark \ref{rem:correlationforp>2} is also a consequence of their result.
We are grateful to J. Zinn for providing us these references.
\end{remark}

\end{section}


\begin{section}{Sections and projections of $B_q^n$ revisited} \label{sec:6}

In this section we derive the comparison results for geometric parameters of hyperplane sections and projections of the balls $B_q^n$ described in the introduction. First, let us explain how the comparison of the aforementioned Gaussian parameters (Theorem \ref{thm:laplacegaussian}) implies the comparison of volume (Corollary \ref{cor:sections}) and mean width (Corollary \ref{cor:width}), following \cite{BGMN05}.

\medskip

\noindent {\it Proof of Corollaries \ref{cor:sections} and \ref{cor:width}.} Fix $q\in(0,2)$ and let $a=(a_1,\ldots,a_n), b=(b_1,\ldots,b_n)\in\R^n$ be unit vectors such that $(a_1^2,\ldots,a_n^2) \preceq (b_1^2,\ldots,b_n^2)$. Recall that $G_a, G_b$ are standard Gaussian random vectors on the hyperplanes $a^\perp$ and $b^\perp$ respectively. According to Theorem \ref{thm:laplacegaussian}, for every $\lambda>0$ we have
$$\mb{E} e^{-\lambda \|G_a\|_{B_q^n\cap a^\perp}^q} \leq \mb{E} e^{-\lambda\|G_b\|_{B_q^n\cap b^\perp}^q}.$$
Integrating this inequality with respect to $\lambda$ and any measure $\mu$ on $(0,\infty)$ we deduce that
$$\mb{E} \int_0^\infty  e^{-\lambda \|G_a\|_{B_q^n\cap a^\perp}^q} \diff\mu(\lambda) \leq \mb{E} \int_0^\infty e^{-\lambda\|G_b\|_{B_q^n\cap b^\perp}^q} \diff\mu(\lambda),$$
which, by Bernstein's theorem, is equivalent to the validity of the inequality
\begin{equation} \label{eq:completelymonotonecomparison}
\mb{E} g(\|G_a\|_{B_q^n\cap a^\perp}^q) \leq \mb{E} g(\|G_b\|_{B_q^n\cap b^\perp}^q)
\end{equation}
 for every completely monotonic function $g:(0,\infty)\to\R$. In particular, choosing  $g(s) = s^{-\alpha/q}$, we get that
$$\mb{E} \|G_a\|_{B_q^n\cap a^\perp}^{-\alpha} \leq \mb{E} \|G_b\|_{B_q^n \cap b^\perp}^{-\alpha},$$
provided that $0<\alpha<n-1$ so that the integrals are finite. Integration in polar coordinates now shows that for every $0<\alpha<n-1$ we have
\begin{equation} \label{eq:spherepowers}
\int_{S(a^\perp)} \|\theta\|_{B_q^n\cap a^\perp}^{-\alpha} \diff\sigma_a(\theta) \leq \int_{S(b^\perp)} \|\theta\|_{B_q^n\cap b^\perp}^{-\alpha} \diff\sigma_b(\theta),
\end{equation}
where $\sigma_a, \sigma_b$ are the rotationally invariant probability measures on the unit spheres $S(a^\perp), S(b^\perp)$ of the hyperplanes $a^\perp$ and $b^\perp$, respectively. Letting $\alpha\to n-1$ in \eqref{eq:spherepowers} along with the identity
\begin{equation} \label{eq:volumeradius}
\int_{S^{m-1}} \|\theta\|_{K}^{-m} \diff\sigma(\theta) = \frac{|K|}{|B_2^m|},
\end{equation}
which holds for every symmetric convex body $K$ in $\R^m$, imply that $|B_q^n \cap a^\perp| \leq |B_q^n \cap b^\perp|$ and Corollary \ref{cor:sections} follows.

Furthermore, applying \eqref{eq:completelymonotonecomparison} to $g(s) = e^{-\lambda s^\beta}$, where $\beta\in(0,1]$ and $\lambda>0$, we have
$$\mb{E} e^{-\lambda\|G_a\|_{B_q^n\cap a^\perp}^{\beta q}} \leq \mb{E} e^{-\lambda\|G_b\|_{B_q^n\cap b^\perp}^{\beta q}}.$$
Since both sides, as functions of $\lambda>0$, are equal at $\lambda=0$ we deduce that their derivatives at $\lambda=0$ also satisfy the same inequality, that is
$$\mb{E} \|G_b\|_{B_q^n\cap b^\perp}^{\beta q} \leq \mb{E} \|G_a\|_{B_q^n\cap a^\perp}^{\beta q},$$
for every $\beta\in(0,1]$. Choosing $\beta=1/q$ and integrating in polar coordinates yields
\begin{equation} \label{eq:Mcomparison}
\int_{S(b^\perp)} \|\theta\|_{B_q^n\cap b^\perp} \diff\sigma_b(\theta) \leq \int_{S(a^\perp)} \|\theta\|_{B_q^n\cap a^\perp} \diff\sigma_a(\theta).
\end{equation}
Recall that for a symmetric convex body $K$ in $\R^m$, the polar body $K^o$ of $K$ is defined to be $K^o=\{x\in\R^m: \ \langle x,y\rangle \leq 1 \ \mbox{for every } y\in K\}$ and if $\|\cdot\|_{K^o}$ is the norm associated with $K^o$ then $\|\theta\|_{K^o} = h_K(\theta)$ for every $\theta\in S^{n-1}$. Thus, combining \eqref{eq:Mcomparison} with the well known identity $B_q^n\cap H = (\proj_H(B_{q^*}^n))^o$, where $q^*>2$ is such that $\frac{1}{q}+\frac{1}{q^*}=1$, we deduce that
\begin{equation}
w(\proj_{b^\perp}(B_{q^*}^n)) \leq w(\proj_{a^\perp}(B_{q^*}^n)).
\end{equation}
In particular, for every hyperplane $H\subseteq\R^n$ we obtain
$$w(\proj_{H_1}(B_{q^*}^n)) \leq w(\proj_H(B_{q^*}^n)) \leq w(\proj_{H_n}(B_{q^*}^n)),$$
where $H_1=(1,0,\ldots,0)^\perp$ and $H_n=(1,\ldots,1)^\perp$. This concludes the proof of Corollary \ref{cor:width}. \hfill$\Box$

\medskip

We finally proceed with the proof of Theorem \ref{thm:laplacegaussian}.

\medskip

\noindent {\it Proof of Theorem \ref{thm:laplacegaussian}.} Fix $q\in(0,2)$. For a hyperplane $H=a^\perp$, where $a=(a_1,\ldots,a_n)\in\R^n$ is a unit vector, let $G_a$ be a standard Gaussian random vector on $a^\perp$ and denote by $H(\e)$ the set
\begin{equation} \label{eq:h(e)}
H(\e) = \{x\in\R^n: \ |\langle x,a\rangle| <\e\}.
\end{equation} 
 To proceed, we will need a representation from \cite[Lemma~14]{BGMN05} for the Laplace transforms of $\|G_a\|_{B_q^n\cap H}^q$ that reads as follows. For every $\lambda>0$ there exist constants $\alpha(q,\lambda), \beta(q,\lambda)>0$ and $c(q,\lambda,n)>0$ such that for every hyperplane $H=a^\perp$, $a=(a_1,\ldots,a_n)\in S^{n-1}$, we have
\begin{equation} \label{eq:laplacerepresentation}
\mb{E} e^{-\lambda \|G_a\|_{B_q^n\cap H}^q} = c(q,\lambda,n) \lim_{\e\to0^+} \frac{1}{2\e} \mu_{q,\lambda}^n (H(\e)),
\end{equation}
where the probability measure $\mu_{q,\lambda}$ on $\R$ is of the form
\begin{equation} \label{eq:measurelaplace}
\diff\mu_{q,\lambda}(t) = e^{-\alpha(q,\lambda) |t|^q - \beta(q,\lambda)t^2} \diff t
\end{equation}
and $\mu_{q,\lambda}^n = \mu_{q,\lambda}^{\otimes n}$. An immediate application of Theorem \ref{thm:charact} yields that $\mu_{q,\lambda}$ is the law of a Gaussian mixture. Thus, by \eqref{eq:measuremixt} there exists a probability measure $\nu=\nu(q,\lambda)$ on $(0,\infty)$ such that if $A\subseteq\R^n$ is a Borel set, then
$$\mu_{q,\lambda}^n(A) = \int_{(0,\infty)^n} \gamma_n(\Delta(y)^{-1}A) \diff\nu_n(y),$$
where $\nu_n = \nu^{\otimes n}$. Notice that for the symmetric strip \eqref{eq:h(e)} we have
$$\Delta(y)^{-1} H(\e) = \Big\{x\in\R^n: \Big| \sum_{j=1}^n a_jy_jx_j \Big|<\e\Big\},$$
that is, $\Delta(y)^{-1}H(\e)$ is also a symmetric strip of width $\big(\sum_{j=1}^n a_j^2 y_j^2 \big)^{-1/2}\e$. Consequently, the rotational invariance of the Gaussian measure implies that
\begin{equation} \label{eq:measurelaplacestrip}
\mu_{q,\lambda}^n(H(\e)) = 2 \int_{(0,\infty)^n} \Psi\Big(\Big(\sum_{j=1}^n a_j^2 y_j^2\Big)^{-1/2}\e\Big)\diff\nu_n(y),
\end{equation}
where $\Psi(s) = \frac{1}{\sqrt{2\pi}} \int_0^s e^{-x^2/2}\diff x$. Combining \eqref{eq:laplacerepresentation} and \eqref{eq:measurelaplacestrip} we deduce that
\begin{equation*}
\begin{split}
c(q,\lambda,n)^{-1}\cdot \mb{E} e^{-\lambda \|G_a\|_{B_q^n\cap H}^q} & = \lim_{\e\to0^+} \frac{1}{\e} \int_{(0,\infty)^n} \Psi \Big( \Big( \sum_{j=1}^n a_j^2 y_j^2 \Big)^{-1/2} \e \Big) \diff\nu_n(y)  \\ & = \int_{(0,\infty)^n
} \frac{\diff}{\diff\e} \Big|_{\e=0} \Psi\Big(\Big(\sum_{j=1}^n a_j^2 y_j^2\Big)^{-1/2}\e\Big) \diff\nu_n(y) \\ & =\frac{1}{\sqrt{2\pi}} \int_{(0,\infty)^n} \Big(\sum_{j=1}^n a_j^2 y_j^2 \Big)^{-1/2} \diff\nu_n(y) =\frac{1}{\sqrt{2\pi}} \mb{E} \Big( \sum_{j=1}^n a_j^2 Y_j^2\Big)^{-1/2},
\end{split}
\end{equation*}
where $Y_1,\ldots,Y_n$ are i.i.d. random variables distributed according to $\nu$. To verify the assumptions of the dominated convergence theorem for the swap of the limit and integration in the second equality, it suffices to check that $\big(\sum_{j=1}^n a_j^2 y_j^2\big)^{-1/2} \in L_1(\nu_n)$, since $\Psi(s) \leq \frac{s}{\sqrt{2\pi}}$ for $s>0$. This immediately follows by Fatou's lemma, that is
$$\int_{(0,\infty)^n} \Big(\sum_{j=1}^n a_j^2 y_j^2 \Big)^{-1/2} \diff\nu_n(y) \leq \sqrt{2\pi} \liminf_{\e\to0^+}\frac{1}{2\e} \mu_{q,\lambda}^n(H(\e)) = \sqrt{2\pi}c(q,\lambda,n)^{-1} \mb{E} e^{-\lambda\|G_a\|_{B_q^n\cap H}^q} <\infty.$$
Now, since $t\mapsto t^{-1/2}$ is a convex function on $(0,\infty)$ and $Y_1,\ldots,Y_n$ are i.i.d. random variables, Marshall and Proschan's result \eqref{eq:marpro} implies the comparison \eqref{eq:laplacegaussiancomp}, as required.
\hfill$\Box$

\medskip

We note that the crucial identity \eqref{eq:measurelaplacestrip} can also be proven in purely probabilistic terms. Let $X_1,\ldots,X_n$ be i.i.d. random variables distributed according to $\mu_{q,\lambda}$ and take i.i.d. positive random variables $Y_1,\ldots,Y_n$ and standard Gaussian random variables $Z_1,\ldots,Z_n$ such that $X_i$ has the same distribution as the product $Y_iZ_i$. Then we have
\begin{equation*}
\begin{split}
\mu_{q,\lambda}^n(H(\e)) & = \mb{E}_Y \mb{P}_Z \Big(\Big|\sum_{j=1}^n a_j Y_jZ_j \Big| <\e \Big) = \mb{E}_Y \mb{P}_Z\Big( |Z| \Big(\sum_{j=1}^n a_j^2 Y_j^2 \Big)^{1/2} <\e\Big) \\ & =\mb{E}_Y \mb{P}_Z \Big( |Z| < \Big(\sum_{j=1}^n a_j^2 Y_j^2\Big)^{-1/2} \e\Big) = 2\mb{E}  \Big[\Psi \Big(\Big(\sum_{j=1}^n a_j^2 Y_j^2\Big)^{-1/2} \e\Big)\Big],
\end{split}
\end{equation*}
where $Z$ is a standard Gaussian random variable, independent of the $Y_i$.

\begin{remark}
A similar approach also yields a direct proof of Corollary \ref{cor:sections}. The crucial ingredient in this case would be an identity from \cite{MP88} and \cite{Bar95} instead of \eqref{eq:laplacerepresentation}. It is proven there that for every $q\in(0,\infty)$, there exists a constant $c(q,n)>0$ such that if $H\subseteq\R^n$ is any hyperplane and $H(\e)$ is defined by \eqref{eq:h(e)}, then
\begin{equation} \label{eq:meyerpajorformula}
|B_q^n\cap H| = c(q,n) \lim_{\e\to0^+} \frac{1}{2\e} \mu_q^n(H(\e)),
\end{equation}
where $\mu_q^n$ is the measure on $\R^n$ with density proportional to $e^{-\|x\|_q^q}$. Since this measure is also a product of laws of i.i.d. Gaussian mixtures the preceding argument works identically. 

In \cite{KZ03}, Koldobsky and Zymonopoulou investigated extremal volumes of sections of the complex $\ell_q$-balls $B_q^n(\C)$, which can also be treated by the approach presented above. From now on we will adopt the obvious identification of $\C^n$ with $\R^{2n}$ without further ado. We will denote by $\langle\cdot,\cdot\rangle$ the standard Hermitian inner product on $\C^n$ and for a vector $\zeta\in\C^n$ we will write $\zeta^\perp$ for the complex hyperplane orthogonal to $\zeta$. Recall that for a vector $z=(x_1,y_1,\ldots,x_n,y_n)\in\R^{2n}$ we denote
$$\|z\|_{\ell_q^n(\C)} = \Big(\sum_{j=1}^n (x_j^2+y_j^2)^{q/2} \Big)^{1/q}= \Big( \sum_{j=1}^n |z_j|^q\Big)^{1/q} ,$$
where $z_j = x_j+iy_j$, and $B_q^n(\C) = \{ z\in\R^{2n}: \|z\|_{\ell_q^n(\C)} \leq 1\}$. Let $H_n=\xi^\perp$ be any complex hyperplane such that $|\xi_1|=\cdots=|\xi_n|$ and $H_1=\eta^\perp$ be such that $\eta_j=0$ for $j\geq2$, where $\xi=(\xi_1,\ldots,\xi_n), \eta=(\eta_1,\ldots,\eta_n)\in\C^{n}$. In \cite{KZ03}, the authors proved that for any $q\in(0,2)$ and any complex hyperplane $H\subseteq\C^n$ the inequalities
\begin{equation} \label{eq:koldobskyzymonopoulou}
|B_q^n(\C) \cap H_n| \leq |B_q^n(\C)\cap H|\leq|B_q^n(\C)\cap H_1|
\end{equation}
hold true. We will sketch an alternative proof of their result, similar to the proof of Theorem \ref{thm:laplacegaussian}. For a complex hyperplane $H=\zeta^\perp$, where $\zeta\in\C^n$, and $\e>0$ denote by $H_{\mathrm{cyl}}(\e)$ the cylinder
\begin{equation}
H_{\mathrm{cyl}}(\e) = \{z\in\C^n: \ |\langle z,\zeta\rangle| <\e\}.
\end{equation}
One can prove (see also \cite[Corollary~2.5]{MP88}) that there exists a constant $c(q,n)>0$ such that for every complex hyperplane $H\subseteq\C^n$ we have
\begin{equation} \label{eq:complexballformula}
|B_q^n(\C) \cap H| = c(q,n) \lim_{\e\to0} \frac{1}{\e^2} \tau_q^n(H_{\mathrm{cyl}}(\e)),
\end{equation}
where the measure $\tau_q^n$ on $\R^{2n}$ is of the form
$$\diff\tau_q^n (x,y) = c_q^n e^{-\sum_{j=1}^n (x_j^2+y_j^2)^{q/2}} \diff x\diff y.$$
Writing $e^{-s^{q/2}} = \int_0^\infty e^{-ts}\diff\mu(t)$ for some measure $\mu$, we deduce that the density of $\tau_q^n$ can be written in the form
$$e^{-\sum_{j=1}^n (x_j^2+y_j^2)^{q/2}} = \int_{(0,\infty)^n} e^{-\sum_{j=1}^n t_j (x_j^2+y_j^2)} \diff\mu_n(t),$$
where $\mu_n=\mu^{\otimes n}$. Therefore, an application of Fubini's theorem and a change of variables imply that there exists a measure $\nu$ on $(0,\infty)$ such that for $\nu_n=\nu^{\otimes n}$ and for every Borel set $A\subseteq\R^{2n}$ we can write
\begin{equation} \label{eq:taurepresentation}
\tau_q^n(A) = \int_{(0,\infty)^n} \gamma_{2n} (\Delta(y_1,y_1,\ldots,y_n,y_n)^{-1} A) \diff\nu_n(y),
\end{equation}
where each coordinate of $y=(y_1,\ldots,y_n)$ is repeated twice. Notice that the image 
$$\Delta(y_1,y_1,\ldots,y_n,y_n)^{-1} H_{\mathrm{cyl}}(\e) = \Big\{ z\in\C^n: \Big| \sum_{j=1}^n \zeta_j y_j z_j \Big| <\e\Big\}$$
is still a cylinder in $\C^n$ with radius $\big(\sum_{j=1}^n |\zeta_j|^2 y_j^2\big)^{-1/2}\e$. Thus, the unitary invariance of complex Gaussian measure and a simple calculation in polar coordinates imply that
\begin{equation} \label{eq:measurecylinder}
\gamma_{2n}(\Delta(y_1,y_1,\ldots,y_n,y_n)^{-1} H_{\mathrm{cyl}}(\e)) = 1- \exp\Big(-\frac{1}{2}\Big(\sum_{j=1}^n |\zeta_j|^2 y_j^2\Big)^{-1} \e^2\Big).
\end{equation}
After interchanging limit and integration in \eqref{eq:complexballformula} and using \eqref{eq:taurepresentation}, \eqref{eq:measurecylinder} we deduce that
\begin{equation*}
\begin{split}
|B_q^n(\C)\cap H| =\frac{c(q,n)}{2}\cdot \int_{(0,\infty)^n} \Big(\sum_{j=1}^n |\zeta_j|^2 y_j^2 \Big)^{-1} \diff\nu_n(y)  = \frac{c(q,n)}{2} \cdot \mb{E} \Big( \sum_{j=1}^n |\zeta_j|^2 Y_j^2 \Big)^{-1},
\end{split}
\end{equation*}
where $Y_1,\ldots, Y_n$ are i.i.d. random variables distributed according to $\nu$. This yields \eqref{eq:koldobskyzymonopoulou} as well as a more general comparison result, similar to Corollary \ref{cor:sections}, by a direct application of Marshall and Proschan's result \eqref{eq:marpro}. \hfill$\Box$
\end{remark}

We note that, in view of Ball's theorem from \cite{Bal86}, a Schur monotonicity result for the volume of sections of $B_q^n$ cannot hold in any fixed dimension $n\geq2$ and $q$ large enough. Similarly, according to Szarek's result from \cite{Sza76}, the same can be said for the volume of projections of $B_q^n$ for values close to $q=1$. Finally, we want to stress that a careful look in the previous works \cite{BN02}, \cite{Kol98} and \cite{KZ03} reveals that, even though not stated explicitly, the Schur monotonicity for the volume was established there as well. The new aspect here is the replacement of representations which were Fourier-analytic in flavor by others that exploit the rotational invariance of the Gaussian measure.

\medskip

\noindent {\it Remark.} Following the publication of this article, Gaussian mixtures have also been used for geometric purposes in \cite{Esk18} and \cite{NT18}.

\medskip

\noindent {\bf Acknowledgements.} We would like to thank Apostolos Giannopoulos for providing several useful references and Petros Valettas for his comments regarding Remark \ref{rem:pv}. We are also indebted to Christian Houdr\'e and Joel Zinn for communicating the references \cite{AH03}, \cite{LP99} and \cite{LP03} to us. Finally, we are very grateful to Assaf Naor for many helpful discussions and to Franck Barthe for valuable feedback.

\end{section}

\bibliography{gaussmixt}
\bibliographystyle{alpha}
\nocite{*}

\end{document}